\tikzset{midarrow/.style = {postaction=decorate, decoration={markings,mark=at position .5 with \arrow{stealth}}}}
\newcommand{\Obj}{\operatorname{Obj}}
\newtheorem{thm}{Theorem}[section]
\newtheorem{cor}[thm]{Corollary}
\newtheorem{lem}[thm]{Lemma}
\newtheorem{prop}[thm]{Proposition}
\theoremstyle{definition}
\newtheorem{dfn}[thm]{Definition}
\newtheorem{dfns}[thm]{Definitions}
\theoremstyle{remark}
\newtheorem{rmk}[thm]{Remark}
\newtheorem{rmks}[thm]{Remarks}
\newtheorem{example}[thm]{Example}
\newtheorem{examples}[thm]{Examples}
\newtheorem{nota}[thm]{Notation}
\numberwithin{equation}{section}
\newcommand{\ZZ}{\mathbb{Z}}
\newcommand{\NN}{\mathbb{N}}
\newcommand{\Gg}{\mathcal{G}}
\newcommand{\Cc}{\mathcal{C}}
\newcommand{\Sk}{\operatorname{Sk}}
\newcommand{\Aut}{\operatorname{Aut}}
\begin{document}

\title[Higher-rank trees]{Higher-rank trees arising from polyhedral graphs}

\begin{abstract}

We introduce a new family of higher-rank graphs, whose construction was inspired by the graphical techniques of Lambek \cite{Lambek} and Johnstone \cite{Johnstone} used for monoid and category embedding results. We show that they are planar rank-$k$ trees 
for $2 \le k \le 4$. We also show that higher-rank trees differ from rank-$1$ trees by giving examples of higher-rank trees having properties which are impossible for rank-$1$ trees. Finally, we collect more examples of higher-rank planar trees which are not in our family.
\end{abstract}

\date{\today}
\subjclass[2010]{Primary: {05C20}; Secondary: {57M50,18D99}}
\thanks{This research was supported by ARC Discovery Projects Grant 200100155}

\author{David Pask}
\address{David Pask \\ School of Mathematics and
Applied Statistics  \\
The University of Wollongong\\
NSW  2522\\
AUSTRALIA} \email{david.a.pask@gmail.com}

\maketitle
  
\section{Introduction}

Higher-rank graphs (or $k$-graphs) were first described by the author and Kumjian in \cite{KP2} as a graphical model for the higher-rank Cuntz-Krieger algebras of \cite{RobSteg1}. The $C^*$-algebra associated to a higher-rank graph has attracted wide attention from researchers in operator algebras (see \cite{E,FGLP,LV,Sp,Y1,Y} for example) as they are highly tractable and provide examples of classifiable $C^*$-algebras.
There is now a growing interest in higher-rank graphs from researchers in algebra (see \cite{CFaH,LPIS,Pre,Ros} amongst others). Lately there has been an interesting investigation of higher-rank graphs from a combinatorial/topological point of view (see \cite{bkps,KKQS,KPSW,PQR1,PQR2} for instance). From this work we now know that a higher-rank graph has a well-defined fundamental groupoid and a topological realisation which has the same fundamental group. The results in the most recent paper \cite{bkps}, has opened many subtle questions about the relationship between ordinary (rank-one) graphs and higher-rank graphs.

For example, the major question addressed in \cite{bkps} was the embedabilty of a higher-rank graph in its fundamental groupoid. This is always true for ordinary (rank-one) graphs but not for rank-two and higher. The embedability property has an impact on an important structure theorem for the $C^*$-algebra of a higher-rank graph: If a connected higher-rank graph $\Lambda$ embeds in its fundamental groupoid, then the $C^*$-algebra $C^* ( \Sigma )$ of its universal cover $\Sigma$ is type $I_0$. Then, under a further technical condition (which holds for rank-one)  $C^* ( \Sigma)$ is Morita equivalent to an abelian $C^*$-algebra (see \cite[Theorem 4.1]{bkps}).

As a higher-rank graph with a single vertex is a monoid, it was natural to look for solutions to the embedding problem in the study of monoids embedding in their enveloping group. The author happened upon the work of Ma{l}'cev in \cite{Malcev} and Lambek in \cite{Lambek}. Lambek comes up with a geometric condition called the \emph{polyhedral condition} (P), see \cite[p.\ 35]{Lambek}. Condition (P)  involves a system of equations derived from a polyhedral graph.
See \cite{Holl} for a good survey of this work. In \cite{Johnstone} Johnstone provides a similar construction when looking for  more general results for embedding a category in a groupoid. 
The authors of \cite{bkps} tried to refine the techniques of \cite{Lambek,Johnstone} to apply them to the embedabilty problem for higher-rank graphs in their fundamental groupoid but ran into technical difficulties, and this work was put aside.

This paper arose from solo work the author undertook after the project  \cite{bkps} was completed. The author took interest in the combinatorial object which parameterises the system of equations used in the work of \cite{Lambek,Johnstone}. In particular, the system of equations (as appears in Lambek's condition (P)) is reminiscent of the equations for bi-coloured commuting squares in a higher-rank graph. Many examples provided by \cite{Lambek, B} turn out to define higher-rank graphs of rank $2$ or $3$. Not only that, they have trivial fundamental group. 

Hence these examples are new instances of higher-rank trees.  The main purpose of the first part of this paper is to investigate their graphical properties, as there are relatively few examples of higher-rank trees available (other than standard constructions involving known rank-$1$ graphs). In the first two-thirds of this paper we produce a new class $\mathcal{L}_{\mathfrak{P}}$ of higher-rank trees from polyhedral graphs whose properties we can readily compute:

\noindent
\textbf{Theorem A:} To each non-degenerate polyhedral graph $\mathfrak{P}=(P,A)$ we construct a higher-rank tree $T \in \mathcal{L}_{\mathfrak{P}}$ which
\begin{enumerate}[leftmargin=0.5cm]
\item[$\bullet$] has a $1$-skeleton $\Sk_T$ which is is planar and  satisfies $| \Sk_T^1 | - 2 | \Sk_T^0 | +4 = 0$;
\item[$\bullet$] is connected, singly connected, locally convex and acyclic, that is $H_i (T)=0$ for $i \ge 1$;
\item[$\bullet$] embeds in its fundamental groupoid and up to quasi-isomorphism, has rank $2,3,4$.
\end{enumerate}

A long-term reason for wanting to find out more about such objects lies in the study of totally disconnected locally compact (tdlc)  simple groups, where automorphism groups of (one-dimensional, homogeneous) trees play an important role (see \cite[\S 3]{W}). This paper therefore is part of an ambition to produce new examples of simple tdlc. We begin to explore this  possiblility, with a potential example, in section~\ref{sec:tailpiece}.

In this manuscript we begin in section~\ref{sec:one} by accumulating some background material and notation  for higher-rank graphs, in particular their description in terms of coloured directed graphs with a collection of bi-coloured commuting squares describing their defining relations.

In section~\ref{sec:two} we begin the construction of the combinatorial object used in \cite{Lambek,Johnstone}. In particular, for each polyhedral graph appearing in their methods, they parameterise a system of equations indexed by half-lines. Johnstone then constructs a directed graph $E$, called a quadrangle club, so-called  as it is built out of directed squares. The system of equations (the Lambek half-arc equations) in question is then parameterised by subgraphs with four vertices and edges, called quadrangles.  In section~\ref{sec:three} we show that this system of equations could be coloured in the same way that the bi-coloured commuting squares in the 1-skeleton associated to a higher-rank graph are coloured. This computation is facilitated by the fact  quadrangle club graph consists of bi-coloured square subgraphs and so is bipartite. Furthermore, this set of bi-coloured commuting squares $\mathcal{C}$ coming from a convex polyhedron is complete, and so gives rise to a higher-rank graph $\Lambda_{E,\mathcal{C}}$, see Theorem~\ref{thm:polygiveshrg}. Since the convex polyhedron is planar, this means that the dimension of resulting the higher-rank graph lies between two and four by the celebrated \emph{four-color theorem} (see \cite{AH1,AHK,AH2}). Furthermore, some basic graph theory allows us to characterise which polyhedral graphs give rise to rank-$2$ graphs (see Proposition~\ref{prop:2-colour}).

The results in \cite{KKQS} show that higher-rank graphs have a topological realisation which is consistent with their fundamental group, as defined in \cite{PQR1}. Since the higher-rank graphs we have constructed begin their life as a graph embedded in a sphere, it would then be natural to suppose that the fundamental group of these new higher-rank graphs is trivial. We conjecture that, under a certain non-degeneracy condition, all planar higher-rank trees we have constructed have dimension less than or equal to four.

In section~\ref{sec:five} we address the question of computing the fundamental group of $\Lambda_{E,\mathcal{C}}$. To compute the fundamental group of a higher-rank graphs, we use the techniques of \cite{KPW} which begins with the computation of a maximal spanning tree of the underlying $1$-skeleton $E$. In section~\ref{sec:greedy} we produce an algorithm for doing this which is weighted towards edges appearing on the left of the bi-coloured commuting squares (for demonstrative reasons only, see Examples~\ref{ex:treex}). We then apply \cite[Theorem 5.1]{KPW} to produce a presentation of the fundamental group and argue that it is trivial by systematically showing all the generators of the group are trivial (see Theorem~\ref{thm:fgistriv}). This computation is facilitated by the fact that all paths in the quadrangle club are at most length two. Finally, we use the results of \cite{bkps}, to show in Corollary~\ref{cor:pthrg-embedd} that these new planar higher-rank graphs embed in their fundamental groupoid.


Finally, moving away from the class $\mathcal{L}_{\mathfrak{P}}$, we provide what the author believes  are three good reasons for
studying finite higher-rank trees from a purely graphical perspective. These reasons are interesting because they are examples where the higher dimensional behaviour varies from the familiar one dimensional behaviour. 

\textbf{First}, a rank-$1$ tree is planar, however is is possible to find (in dimension greater than three) a higher rank tree which is not planar. 

\textbf{Second}, it is well-known that a rank-$1$ tree does not admit a (fixed-point) free automorphism of order three. However, for a higher-rank 
tree of higher dimension, the underlying category is not a free category, so there can be cycles (albeit trivial in the fundamental group). We give an example of a rank-$2$ planar tree which admits a (fixed-point) free automorphism of order $3$.

\textbf{Third}, it is well-known that a rank-$1$ tree with an odd number of vertices always has a (fixed-point free) automorphism. We give an example of a planar higher-rank tree with an odd number of vertices which has no automorphisms.

We finish by giving classes of examples of higher-rank trees which are already in the literature.
The most important of which are the $\Omega_{k,m}$ graphs whose properties we summarise:
\par\noindent\textbf{Theorem B:} To each integer $k \ge 1$ and number $0 \neq m \in \NN^k$ there is a higher rank tree  $\Omega_{k,m}$ which
\begin{enumerate}[leftmargin=0.5cm]
\item[$\bullet$] is planar if $k=2$, or $k=3$ and $m=(1,1,1)$;
\item[$\bullet$] does not belong to $\mathcal{L}_{\mathfrak{P}}$;
\item[$\bullet$] is connected, singly connected, locally convex and acyclic, that is $H_i ( \Omega_{k,m} ) =0$ for $i \ge 1$;
\item[$\bullet$] has rank $k$ and embeds in its fundamental groupoid.
\end{enumerate}

 

The author would like to thank the other authors of \cite{bkps} for stimulating conversations about the embedding of higher-rank graphs in their fundamental groupoid, in particular the examples which stimulated his interest in studying them more deeply in this manuscript.

\section{Background} \label{sec:one}

Let $k \ge 1$ and $\NN^k$ denote the free abelian monoid with generators
$\{ \varepsilon_i : 1 \le i \le k \}$. Let $\le$ be the usual coordinatewise order on $\NN^k$. 
For $m,n\in \NN^k$, we write $m \vee n $ for their coordinate-wise maximum and $m \wedge n$ for their coordinatewise minimum.
We write $\mathbf{1}_k$ or just $\mathbf{1}$ for $(1, \ldots , 1) \in \NN^k$. A \emph{directed graph} is a quadruple $E=(E^0,E^1,r,s)$ consisting of finitely many vertices $E^0$ and edges $E^1$ whose direction is given by the maps $r,s : E^1 \to E^0$. The sets $E^n$, $n \ge 0$ denote the collection of paths of length $n$ in $E$. The set of finite paths, $E^* = \cup_{n \ge 0} E^n$ forms a category with the roles of the range and source maps interchanged.

\subsection{Coloured graphs} \label{sec:CG}
We briefly review the definitions given in \cite[\S 3]{hrsw} which themselves have been tailored for connections with higher-rank graphs: For $k \ge 1$, a \emph{$k$-coloured graph} $E=(E^0,E^1,r,s)$ is a directed graph along with a \emph{colour map} $c_E : E^1 \to \{c_1,\dots,c_k\}$. By considering $\{c_1,\dots , c_k\}$ as generators of the free group $\mathbb{F}_k$, we extend $c_E:E^* \setminus E^0 \to \mathbb{F}_k^+$ by $c_E (\mu_1 \cdots \mu_n)=c_E (\mu_1)c_E (\mu_2) \cdots c_E (\mu_n)$. We will drop the subscript from $c_E$ if there is no risk of confusion. For $k$-coloured directed graphs $E$ and $F$, a \emph{coloured-graph morphism} $\phi: F \to E$ is a graph morphism satisfying $c_E \circ \phi^1 = c_F$. 

\vspace{1em}
\noindent\begin{minipage}[c]{0.78\linewidth}
\begin{example}\label{ex:model coloured graphs} \label{dfn:Ekm}
For $k \ge 1$ and $m\in \mathbb{N}^k$, the $k$-coloured graph $(E_{k,m},c_E)$ is defined by 
$E_{k,m}^0=\{n\in\mathbb{N}^k:0\le n\le m\}$, $E_{k,m}^1=\{f_i^n : n,n+\varepsilon_i\in E_{k,m}^0\}$, with $r(f_i^n)=n$, $s(f_i^n)=n+\varepsilon_i$ and $c_{E}(f_i^n)=c_i$. 
The 2-coloured graph $(E_{2,(\varepsilon_1+\varepsilon_2)},c_E)$ is used often, and is depicted to the right. For $2$-coloured graphs, the convention is to draw edges with colour $c_1$ blue (or solid) and edges with colour $c_2$ red (or dashed).
\end{example}
\end{minipage}
\begin{minipage}[c]{0.2\linewidth}
\[
\begin{tikzpicture}[>=stealth,yscale=1.5,xscale=2]
\node (00) at (0,0) {$0$};
\node (10) at (1,0) {$\varepsilon_1$}
	edge[-latex,thick,blue] node[auto,black] {$f_1^{0}$} (00);
\node (01) at (0,1) {$\varepsilon_2$}
	edge[-latex,thick,red,dashed] node[auto,black,swap] {$f_{2}^{0}$} (00);
\node (11) at (1,1) {$\varepsilon_1+\varepsilon_2$}
	edge[-latex,thick,blue] node[auto,black,swap] {$f^{\varepsilon_2}_{1}$} (01)
	edge[-latex,thick,red,dashed] node[auto,black] {$f_{2}^{\varepsilon_1}$} (10);
\end{tikzpicture}
\]
\end{minipage}

\begin{dfn} \label{dfn:square}
Let $(E,c_E')$ be a $k$-coloured graph and $i \neq j \leq k$. A coloured graph morphism $\phi: (E_{k,(\varepsilon_i+\varepsilon_j)},c_E) \to (E,c_E')$ is a \emph{square} in $E$.
\end{dfn}

\noindent
One may represent a square $\phi$ as a labelled version of $(E_{2,(\varepsilon_i+\varepsilon_j)},c_E)$. For instance the $2$-coloured graph below on the left has only one square $\phi$, shown to its right, given by $\phi(n) = v$ for all $n \in E_{2,{(\varepsilon_1+\varepsilon_2)}}^0$, $\phi( f_1^0) = \phi( f_1^{\varepsilon_2}) = e$ and $\phi( f_2^0 ) = \phi( f_2^{\varepsilon_1} ) = f$. 
\[
\begin{tikzpicture}[>=stealth,scale=1.2]
\node[inner sep=0.8pt] (v) at (0,0) {$v$}
	edge[loop,blue,thick,latex-,in=45,out=-45,looseness=15] node[auto,black,swap] {$e$} (v)
	edge[loop,red,thick,dashed,latex-,in=135,out=-135,looseness=15] node[auto,black] {$f$} (v);
\begin{scope}[xshift=2.5cm,yshift=-0.5cm]
\node at (0.5,0.5) {$\phi$};
\node[inner sep=0.8pt] (00) at (0,0) {$v$};
\node[inner sep=0.8pt] (10) at (1,0) {$v$}
	edge[-latex,blue,thick] node[auto,black] {$e$} (00);
\node[inner sep=0.8pt] (01) at (0,1) {$v$}
	edge[-latex,red,dashed,thick] node[auto,black,swap] {$f$} (00);
\node[inner sep=0.8pt] (11) at (1,1) {$v$}
	edge[-latex,blue,thick] node[auto,black,swap] {$e$} (01)
	edge[-latex,red,dashed,thick] node[auto,black] {$f$} (10);
\end{scope}
\end{tikzpicture}
\]

\noindent
Let $\Cc_E = \{ \phi : (E_{k,\varepsilon_i+\varepsilon_j} ,c_E)\to (E,c_E') : 1 \le i \neq j \le k  \}$ denote the collection of squares in a $k$-coloured graph $(E,c_E)$.

\begin{dfns} \label{dfn:complete-squares}
A collection of squares $\Cc$ in $(E,c_E)$ is called \emph{complete} if for each $i \neq j \leq k$ and $c_ic_j$-coloured path $fg \in E^2$, there exists a unique $\phi \in \Cc$ such that $\phi(f_i^{0}) = f$ and $\phi(f_j^{\varepsilon_i}) = g$.

The uniqueness of $\phi$ gives a unique $c_jc_i$-coloured path $g'f'$ with $g' = \phi(f_j^{0})$ and $f' = \phi(f_i^{\varepsilon_j})$. We will write $fg \sim_\Cc g'f'$ and refer to elements $(fg,g'f')$ of this relation (on $\{ (\alpha, \beta ) \in E^2 \times E^2 : s(\alpha)=s(\beta),r(\alpha)=r(\beta), c(\alpha)=c_ic_j, c(\beta)=c_jc_i , i \neq j \}$) as \emph{commuting squares}. 
\end{dfns}

\begin{rmk}
According to \cite[Theorem 4.4, Theorem 4.5]{hrsw} If a coloured graph $(E,c)$
has a complete set of squares and is such that there are no paths of length three (i.e. $E^3=\emptyset$). Then a condition, called \emph{associativity}, used for squares in $k$-coloured graphs $(E,c_E)$, where $k \ge 3$ does not come into play (see \cite[Equation (3.2)]{hrsw}).
\end{rmk}

\subsection{Higher-rank graphs}

A \emph{higher-rank graph} (or a \emph{rank-$k$ graph}) is a countable category $\Lambda$ with a \emph{degree} functor $d:\Lambda \to \NN^k$, $k \ge 1$, satisfying the \emph{factorisation property}: if $\lambda\in\Lambda$ and $m,n\in\NN^k$ are such that $d(\lambda)=m+n$, then there are unique $\mu,\nu \in \Lambda$ with $d(\mu)=m$, $d(\nu)=n$ and $\lambda=\mu\nu$. 

Given $m \in \NN^k$ we define $\Lambda^m := d^{-1}(m)$. Following \cite{PQR1}, for $v,w \in \Lambda^0$ and $F \subseteq \Lambda$ define $vF := r^{-1}(v) \cap F$, $Fw := s^{-1}(w) \cap F$, and $vFw := vF \cap Fw$. If $V \subset \Lambda^0$, then $V\Lambda = r^{-1} (V)$ and $\Lambda V = s^{-1} (V)$.

The factorisation property allows us to identify $\Lambda^0$ with $\operatorname{Obj}(\Lambda)$, and we call its elements \emph{vertices}. A vertex $v \in \Lambda^0$ is a \emph{sink} if $s^{-1} (v) = \emptyset$, a vertex $w \in \Lambda^0$ is a \emph{source} if $r^{-1} (w) = \emptyset$. By the factorisation property, for each $\lambda\in\Lambda$ and $m \leq n \leq d(\lambda)$, we may write $\lambda=\lambda'\lambda''\lambda'''$, where $d(\lambda')=m, d(\lambda'') = n-m$ and $d(\lambda'')=d(\lambda)-n$; then $\lambda (m,n) :=\lambda''$. For more information about higher-rank graphs see \cite{KP1,RSY1} for example.

We define the \emph{$1$-skeleton} of a $k$-graph $\Lambda$ to be the $k$-coloured graph $(\Sk_\Lambda,c)$ given by $\Sk_\Lambda^0 = \Obj(\Lambda)$, $\Sk_\Lambda^1 = \bigcup_{i\leq k}\Lambda^{\varepsilon_i}$, with range and source as in $\Lambda$. The colouring map $c : \Sk_\Lambda^1 \to \{c_1,\dots,c_k\}$ is given by $c(f) = c_i$ if and only if $f \in \Lambda^{\varepsilon_i}$. The $1$-skeleton $(\Sk_\Lambda,c)$ comes with a canonical set of squares $\Cc_\Lambda := \{\phi_\lambda : \lambda \in \Lambda^{\varepsilon_i+\varepsilon_j}: i \neq j \leq k  \}$, where $\phi_\lambda : E_{k,\varepsilon_i+\varepsilon_j} \to \Sk_\Lambda$ is given by $\phi_\lambda(\varepsilon_\ell^{n}) = \lambda(n,n+\varepsilon_\ell)$ for each $n \leq \varepsilon_i+\varepsilon_j$ and $\ell = i,j$. The collection $\Cc_\Lambda$ is complete by \cite[Lemma 4.2]{hrsw}.

A \emph{quasi-morphism} from a rank-$k$ graph $( \Omega , d_\Omega )$ to a rank-$\ell$ graph $( \Lambda , d_\Lambda )$ is a pair $( \phi , f)$ consisting of a functor $\phi : \Omega \to \Lambda$ and a homomorphism $f: \NN^k \to \NN^\ell$ such that $d_\Lambda \circ \phi = f \circ d_\Omega$. 

\begin{examples} \label{ex:omega}
\begin{enumerate}[leftmargin=1cm,label={(\roman *)}]
\item The path category $E^*$ of a directed graph is a rank-$1$ graph. Indeed, every rank-$1$ graph occurs in this way.
\item Adapting \cite{RSY1} slightly, for each $m \in \NN^k$, $m \neq 0$ and $k \ge 1$, we define a rank-$k$ graph $\Omega_{k,m}$ as follows:
Let $\Omega_{k,m} = \{(p,q) \in \NN^k \times \NN^k : p \le q \le m\}$. 
We may define a range map $r(p,q) = (p,p)$ and a source map $s(p,q) = (q,q)$, and composition $( \ell,n) = ( \ell,m) (m,n)$. We define the degree map on $\Omega_{k,m}$ by $d(p,q) = q-p$. We may identify $\Omega_{k,m}^0$ with $\{p \in \NN^k : p \le m\}$ via the map $(p,p) \mapsto p$. 
\[
\begin{tikzpicture}[scale=0.85]

\node[inner sep=2.5pt] at (0,0) {\tiny $\bullet$};
\node[inner sep=2.5pt] at (0,1) {\tiny $\bullet$};
\node[inner sep=2.5pt] at (0,2) {\tiny $\bullet$};
\node[inner sep=2.5pt] at (0,3) {\tiny $\bullet$};


\node[inner sep=2.5pt] at (1,0) {\tiny $\bullet$};
\node[inner sep=2.5pt] at (1,1) {\tiny $\bullet$};
\node[inner sep=2.5pt] at (1,2) {\tiny $\bullet$};
\node[inner sep=2.5pt] at (1,3) {\tiny $\bullet$};


\node[inner sep=2.5pt] at (2,0) {\tiny $\bullet$};
\node[inner sep=2.5pt] at (2,1) {\tiny $\bullet$};
\node[inner sep=2.5pt] at (2,2) {\tiny $\bullet$};
\node[inner sep=2.5pt] at (2,3) {\tiny $\bullet$};


\node[inner sep=2.5pt] at (3,0) {\tiny $\bullet$};
\node[inner sep=2.5pt] at (3,1) {\tiny $\bullet$};
\node[inner sep=2.5pt] at (3,2) {\tiny $\bullet$};
\node[inner sep=2.5pt] at (3,3) {\tiny $\bullet$};


\node[inner sep=2.5pt] at (4,0) {\tiny $\bullet$};
\node[inner sep=2.5pt] at (4,1) {\tiny $\bullet$};
\node[inner sep=2.5pt] at (4,2) {\tiny $\bullet$};
\node[inner sep=2.5pt] at (4,3) {\tiny $\bullet$};


\node[inner sep=2.5pt] at (5,0) {\tiny $\bullet$};
\node[inner sep=2.5pt] at (5,1) {\tiny $\bullet$};
\node[inner sep=2.5pt] at (5,2) {\tiny $\bullet$};
\node[inner sep=2.5pt] at (5,3) {\tiny $\bullet$};

\draw[blue, thick,midarrow] (1,0)--(0,0);
\draw[blue, thick,midarrow] (2,0)--(1,0);
\draw[blue, thick,midarrow] (3,0)--(2,0);
\draw[blue, thick,midarrow] (4,0)--(3,0);
\draw[blue,thick,midarrow] (5,0)--(4,0);

\draw[blue, thick,midarrow] (1,1)--(0,1);
\draw[blue, thick,midarrow] (2,1)--(1,1);
\draw[blue, thick,midarrow] (3,1)--(2,1);
\draw[blue, thick,midarrow] (4,1)--(3,1);
\draw[blue,thick,midarrow] (5,1)--(4,1);

\draw[blue, thick,midarrow] (1,2)--(0,2);
\draw[blue, thick,midarrow] (2,2)--(1,2);
\draw[blue, thick,midarrow] (3,2)--(2,2);
\draw[blue, thick,midarrow] (4,2)--(3,2);
\draw[blue,thick,midarrow] (5,2)--(4,2);

\draw[blue, thick,midarrow] (1,3)--(0,3);
\draw[blue, thick,midarrow] (2,3)--(1,3);
\draw[blue, thick,midarrow] (3,3)--(2,3);
\draw[blue, thick,midarrow] (4,3)--(3,3);
\draw[blue,thick,midarrow] (5,3)--(4,3);

\draw[red, dashed, thick,midarrow] (0,1)--(0,0);
\draw[red, dashed, thick,midarrow] (0,2)--(0,1);
\draw[red, dashed, thick,midarrow] (0,3)--(0,2);

\draw[red, dashed, thick,midarrow] (1,1)--(1,0);
\draw[red, dashed, thick,midarrow] (1,2)--(1,1);
\draw[red, dashed, thick,midarrow] (1,3)--(1,2);

\draw[red, dashed, thick,midarrow] (2,1)--(2,0);
\draw[red, dashed, thick,midarrow] (2,2)--(2,1);
\draw[red, dashed, thick,midarrow] (2,3)--(2,2);

\draw[red, dashed, thick,midarrow] (3,1)--(3,0);
\draw[red, dashed, thick,midarrow] (3,2)--(3,1);
\draw[red, dashed, thick,midarrow] (3,3)--(3,2);

\draw[red, dashed, thick,midarrow] (4,1)--(4,0);
\draw[red, dashed, thick,midarrow] (4,2)--(4,1);
\draw[red, dashed, thick,midarrow] (4,3)--(4,2);

\draw[red, dashed, thick,midarrow] (5,1)--(5,0);
\draw[red, dashed, thick,midarrow] (5,2)--(5,1);
\draw[red, dashed, thick,midarrow] (5,3)--(5,2);

\node at (-1.5,2) {$\Omega_{2,(5,3)}$};

\begin{scope}[xshift=9cm,scale=1.6]

 \node[inner sep=2.5pt, circle, opacity=0] (vert000) at (0,0,0) [draw] {.};
\node[inner sep=2.5pt, circle, opacity=0] (vert100) at (1,0,0) [draw] {.};
 \node[inner sep=2.5pt, circle, opacity=0] (vert200) at (2,0,0) [draw] {.};
        
\node[inner sep=2.5pt, circle, opacity=0] (vert010) at (0,1,0) [draw] {.};
\node[inner sep=2.5pt, circle, opacity=0] (vert110) at (1,1,0) [draw] {.};
\node[inner sep=2.5pt, circle, opacity=0] (vert210) at (2,1,0) [draw] {.};
        
\node[inner sep=2.5pt, circle, opacity=0] (vert020) at (0,2,0) [draw] {.};
\node[inner sep=2.5pt, circle, opacity=0] (vert120) at (1,2,0) [draw] {.};
\node[inner sep=2.5pt, circle, opacity=0] (vert220) at (2,2,0) [draw] {.};
        
\node[inner sep=2.5pt, circle, opacity=0] (vert001) at (0,0,1) [draw] {.};
\node[inner sep=2.5pt, circle, opacity=0] (vert101) at (1,0,1) [draw] {.};
\node[inner sep=2.5pt, circle, opacity=0] (vert201) at (2,0,1) [draw] {.};
       
\node[inner sep=2.5pt, circle, opacity=0] (vert011) at (0,1,1) [draw] {.};
\node[inner sep=2.5pt, circle, opacity=0] (vert111) at (1,1,1) [draw] {.};
\node[inner sep=2.5pt, circle, opacity=0] (vert211) at (2,1,1) [draw] {.};
      
\node[inner sep=2.5pt, circle, opacity=0] (vert021) at (0,2,1) [draw] {.};
\node[inner sep=2.5pt, circle, opacity=0] (vert121) at (1,2,1) [draw] {.};
\node[inner sep=2.5pt, circle, opacity=0] (vert221) at (2,2,1) [draw] {.};

\node[inner sep=0pt, circle, fill=black] at (vert000) [draw] {.};

\node[inner sep=2.5pt, circle, anchor = north west] at (vert000.south east) {$ $};

\node[inner sep=0pt, circle, fill=black] at (vert001) [draw] {.};

\node[inner sep=2.5pt, circle, anchor = north west] at (vert001.south east) {$ $};

\node[inner sep=0pt, circle, fill=black] at (vert010) [draw] {.};

\node[inner sep=2.5pt, circle, anchor = north west] at (vert010.south east) {$ $};

\node[inner sep=0pt, circle, fill=black] at (vert011)  [draw] {.};

\node[inner sep=2.5pt, circle, anchor = north west] at (vert011.south east) {$ $};

\node[inner sep=0pt, circle, fill=black] at (vert020) [draw] {.};

\node[inner sep=2.5pt, circle, anchor = north west] at (vert020.south east) {$ $};

\node[inner sep=0pt, circle, fill=black] at (vert021) [draw] {.};

\node[inner sep=2.5pt, circle, anchor = north west] at (vert021.south east) {$ $};

\node[inner sep=0pt, circle, fill=black] at (vert100) [draw] {.};

\node[inner sep=2.5pt, circle, anchor = north west] at (vert100.south east) {$ $};

\node[inner sep=0pt, circle, fill=black] at (vert101) [draw] {.};

\node[inner sep=2.5pt, circle, anchor = north west] at (vert101.south east) {$ $};

\node[inner sep=0pt, circle, fill=black] at (vert110) [draw] {.};

\node[inner sep=2.5pt, circle, anchor = north west] at (vert110.south east) {$ $};

\node[inner sep=0pt, circle, fill=black] at (vert111) [draw] {.};

\node[inner sep=2.5pt, circle, anchor = north west] at (vert111.south east) {$ $};

\node[inner sep=0pt, circle, fill=black] at (vert120) [draw] {.};

\node[inner sep=2.5pt, circle, anchor = north west] at (vert120.south east) {$ $};

\node[inner sep=0pt, circle, fill=black] at (vert121) [draw] {.};

\node[inner sep=2.5pt, circle, anchor = north west] at (vert121.south east) {$ $};

\node[inner sep=0pt, circle, fill=black] at (vert200) [draw] {.};

\node[inner sep=2.5pt, circle, anchor = north west] at (vert200.south east) {$ $};

\node[inner sep=0pt, circle, fill=black] at (vert201) [draw] {.};

\node[inner sep=2.5pt, circle, anchor = north west] at (vert201.south east) {$ $};

\node[inner sep=0pt, circle, fill=black] at (vert210) [draw] {.};

\node[inner sep=2.5pt, circle, anchor = north west] at (vert210.south east) {$ $};

\node[inner sep=0pt, circle, fill=black] at (vert211) [draw] {.};

\node[inner sep=2.5pt, circle, anchor = north west] at (vert211.south east) {$ $};

\node[inner sep=0pt, circle, fill=black] at (vert220) [draw] {.};
\node[inner sep=2.5pt, circle, anchor = north west] at (vert220.south east) {$ $};

\node[inner sep=0pt, circle, fill=black] at (vert221) [draw] {.};
\node[inner sep=2.5pt, circle, anchor = north west] at (vert221.south east) {$ $};

\draw[style=semithick, blue,-latex] (vert100.west)--(vert000.east);
        
\draw[style=semithick,blue,-latex] (vert110.west)--(vert010.east);
        
\draw[style=semithick, blue,-latex] (vert120.west)--(vert020.east);
        
\draw[style=semithick, blue,-latex] (vert101.west)--(vert001.east);
       
\draw[style=semithick, blue,-latex] (vert200.west)--(vert100.east);
        
\draw[style=semithick, blue,-latex] (vert210.west)--(vert110.east);
        
\draw[style=semithick,  blue,-latex] (vert220.west)--(vert120.east);
        
\draw[style=semithick,  blue,-latex] (vert201.west)--(vert101.east);
        
\draw[style=semithick, dotted,green!50!black,-latex] (vert010.south)--(vert000.north);
        
\draw[style=semithick,dotted,green!50!black,-latex] (vert011.south)--(vert001.north);
        
\draw[style=semithick, dotted,green!50!black,-latex] (vert110.south)--(vert100.north);
        
\draw[style=semithick, dotted,green!50!black,-latex] (vert111.south)--(vert101.north);
        
\draw[style=semithick, dotted,green!50!black,-latex] (vert210.south)--(vert200.north);
        
\draw[style=semithick, dotted,green!50!black,-latex] (vert211.south)--(vert201.north);
        
\draw[style=semithick, dotted,green!50!black,-latex] (vert020.south)--(vert010.north);
        
\draw[style=semithick, dotted,green!50!black,-latex] (vert021.south)--(vert011.north);
        
 \draw[style=semithick, dotted,green!50!black,-latex] (vert120.south)--(vert110.north);
        
\draw[style=semithick, dotted,green!50!black,-latex] (vert121.south)--(vert111.north);
        
\draw[style=semithick, dotted,green!50!black,-latex] (vert220.south)--(vert210.north);
        
\draw[style=semithick, dotted,green!50!black,-latex] (vert221.south)--(vert211.north);
        
\draw[style=semithick, blue,-latex] (vert111.west)--(vert011.east);
        
\draw[style=semithick,blue,-latex] (vert121.west)--(vert021.east);
        
\draw[style=semithick,blue,-latex] (vert211.west)--(vert111.east);
        
\draw[style=semithick, blue,-latex] (vert221.west)--(vert121.east);

\draw[style=semithick,       dashed,red,latex-] (vert001.north east) -- (vert000.south west);
        
\draw[style=semithick, dashed,red,latex-] (vert011.north east) -- (vert010.south west);
        
\draw[style=semithick, dashed,red,latex-] (vert021.north east) -- (vert020.south west);
        
\draw[style=semithick, dashed,red,latex-] (vert101.north east) -- (vert100.south west);
        
\draw[style=semithick, dashed,red,latex-] (vert111.north east) -- (vert110.south west);
        
\draw[style=semithick,dashed,red,latex-] (vert121.north east) -- (vert120.south west);
        
\draw[style=semithick, dashed,red,latex-] (vert201.north east) -- (vert200.south west);
        
\draw[style=semithick, dashed,red,latex-] (vert211.north east) -- (vert210.south west);
        
\draw[style=semithick, dashed,red,latex-] (vert221.north east) -- (vert220.south west);

\node at (3,1) {$\Omega_{3,(2,1,2)}$};

\end{scope}

\end{tikzpicture}
\]

\noindent
The $1$-skeleton of $\Omega_{2,(5,3)}$ is $(E_{2,(5,3)},c)$, where $(E_{2,(5,3)},c)$ is defined in Definition~\ref{dfn:Ekm}.
\end{enumerate}
\end{examples}

%
%

\begin{dfns}
The $k$-graph  $\Lambda$ is \emph{connected} if the equivalence relation $\sim$ on $\Lambda^0$ generated by $\{(u,v)\mid u\Lambda v\ne\emptyset\}$ is $\Lambda^0\times\Lambda^0$.
A $k$-graph $\Lambda$ is \emph{singly connected} if there is at most one path between any two  vertices; that is, for all $u,v \in \Lambda^0$ we have $| u {\Lambda}v | \le 1$.
\end{dfns}
\begin{minipage}[l]{0.8\textwidth}
\begin{dfn}[{Locally convex, \cite[Definition 3.9]{RSY1}}]
A rank-$k$ graph $\Lambda$ is \textit{locally convex} if for $e \in \Lambda^{\varepsilon_i}$
and $f \in \Lambda^{\varepsilon_j}$, $1 \le i \neq j \le k$ such that $r(e) = r(f)$, there are $e' \in s ( f ) \Lambda^{\varepsilon_i}$ and $f' \in s(e)\Lambda^{\varepsilon_j}$ such that $ef'=fe'$.
\end{dfn}
\end{minipage}%
\begin{minipage}[r]{0.2\textwidth}
\[
\begin{tikzpicture}[scale=0.6]

 \node[inner sep=2.8pt, circle] (27) at (0,0) {$\bullet$};
\node[inner sep=2.8pt, circle] (28) at (0,2) {$\bullet$};	
\node[inner sep=2.8pt, circle] (29) at (2,0) {$\bullet$};
\node[inner sep=2.8pt, circle] (30) at (2,2) {$\bullet$};	
\path[->, red, dashed, >=latex,thick] (28) edge [left] node {$f$} (27);
\path[->, red, dashed, >=latex,thick] (30) edge [right] node {$f'$} (29);

\path[->, blue, >=latex,thick] (29) edge [below] node {$e$} (27);
\path[->, blue, >=latex,thick] (30) edge [above] node {$e'$} (28);

\end{tikzpicture}
\]
\end{minipage}

\medskip
\noindent
The locally convex condition ensures that the range of the degree map $d : \Lambda \to \NN^k$ is a convex set of $\NN^k$. \par
\medskip
\textbf{Standing assumption:} We shall assume that all $k$-graphs in this manuscript are locally convex. \par
\medskip
\noindent
For a locally convex $k$-graph $\Lambda$ and 
$m \in \mathbb{N}^k$, we write 
\[
\Lambda^{\leq m} := \{ \lambda \in \Lambda : d(\lambda) \leq m 
\text{ and } s ( \lambda ) \Lambda^{\varepsilon_i} = \emptyset 
\text{ whenever } d  (\lambda ) + \varepsilon_i \leq m \} .
\]

\noindent
Note that $\Lambda^{\le 0} = \Lambda^0$.

\begin{dfn}\label{def:essential}
Let  $\Lambda$ be a rank-$k$ graph, $G$ a countable group, and $c : \Lambda \to G$ a functor (1-cocycle), then $c$ is \emph{essential} if the restriction of $c$ to $u{\Lambda}v$ is injective for all $u, v \in \Lambda^0$.
\end{dfn}

\begin{example}
It is straightforward to check that the rank-$k$ graph $\Omega_{k,m}$ described Example~\ref{ex:omega} is connected and singly connected. The degree functor is essential.
\end{example}

\noindent
Every rank-$k$ graph $\Lambda$ has a fundamental groupoid, defined as follows (see \cite[Section~19.1]{Schubert} or \cite[Section~3]{PQR1}). The following definitions come from \cite{bkps}:

\begin{dfn}\label{dfn:fundgpd}
Let $\Lambda$ be a rank-$k$ graph. There exists a groupoid $\Pi(\Lambda)$ and a functor $i : \Lambda \to \Pi(\Lambda)$ such that $i(\Lambda^0)=\Pi(\Lambda)^0$, with the following universal property: 

\begin{minipage}[l]{0.7\textwidth}
For every functor $F$ from $\Lambda$ into a groupoid $\Gg$, there exists a unique groupoid homomorphism $\tilde{F} : \Pi(\Lambda) \to \Gg$ such that $\tilde{F} \circ i = F$. The pair $(\Pi(\Lambda), i)$ is unique up to canonical isomorphism, so we refer to any such groupoid $\Pi(\Lambda)$ as the \emph{fundamental groupoid} of $\Lambda$.
\end{minipage}%
\begin{minipage}{0.3\textwidth}
\[
\begin{tikzpicture}

\node[inner sep=0pt,circle] at (2,2) {$\Pi ( \Lambda )$};
\node at (0,2) {$\Lambda$};
\node at (2,0) {${\mathcal G}$};
\node at (1,2.2) {$i$};
\node at (0.8,0.8) {$F$};
\node at (2.25,1) {$\tilde{F}$};

\draw[->] (0.3,2) -- (1.5,2);
\draw[->] (2,1.8) -- (2,0.2);
\draw[->] (0.2,1.8) -- (1.8,0.2);
\end{tikzpicture}
\]
\end{minipage}
\end{dfn}

\noindent
The assignment $\Lambda \mapsto \Pi (\Lambda)$ is a functor from $k$-graphs to groupoids. Note that $\Pi(\Lambda)$ is denoted $\Gg (\Lambda)$ in \cite{PQR1}. Each $k$-graph also has a fundamental group; the standard definition, for connected rank-$k$ graphs, is as any one of the isotropy groups of its fundamental groupoid, as follows.

\begin{dfn}
Let $\Lambda$ be a rank-$k$ graph, then the \emph{fundamental group} $\pi_1(\Lambda, v)$ of $\Lambda$ at $v \in\Lambda^0$ is the isotropy group $\pi_1 (\Lambda, v) := v \Pi ( \Lambda ) v$ of $\Pi(\Lambda)$ at $v$.
\end{dfn}
\begin{rmk} \label{rmk:opposite0}
Let $(\Lambda,d)$ be a rank-$k$ graph, then the opposite category $\Lambda^{\mathrm{op}}$ with the degree functor $d_{\mathrm{op}} ( \lambda^{\mathrm{op}} ) = d(\lambda)$ is also a rank-$k$ graph. It is straightforward to see that $\Pi ( \Lambda ) = \Pi ( \Lambda^{\mathrm{op}} )$ and hence $\pi_1 (\Lambda, v) = \pi_1 ( \Lambda^{\mathrm{op}},v)$. The $1$-skeleton of $\Lambda^{\mathrm{op}}$ is $\Sk_\Lambda^{\mathrm{op}}$, that is the directed graph $( \Lambda^0 , \Lambda^{\varepsilon_1} \sqcup \Lambda^{\varepsilon_2} , s , r)$.
\end{rmk}

\subsection{Gluing higher-rank graphs} \label{sec:glue-coloured}

\noindent
There are results about gluing higher-rank graphs in   written in categorical terminology in \cite[\S 2]{KPSW}. Since we are using coloured graphs with bi-coloured commuting squares to model higher-rank graphs exclusively here, we pause briefly here to give these results from a coloured graph perspective. 

Let $(E,c)$ be a $k$-coloured graph with squares $\Cc$.  
Suppose that $\sim_1$ is an equivalence relation on $E^0 \sqcup E^1$
with the following property: 
\begin{center}
For $e,e' \in E^1$ we have if $e \sim_1 e'$ then $c(e) = c(e')$, $s(e) \sim_1 s(e')$, $r(e) \sim_1 r(e')$. 
\end{center}

\begin{rmk} \label{rmk:simdef}
We may expand the equivalence relation $\sim_1$ on $E^0 \sqcup E^1$, to a relation $\sim$ on all of $E^*$ as follows:  First we define a relation $\sim_2$ on $\sqcup_{i=0}^2 E^i$ as follows: 
\begin{equation} \label{eq:nested1}
x \sim_2  y \text{ if and only if } \begin{cases} x=ef, y=e'f' & e \sim_1 e', f \sim_1 f', \\
x \sim_1 y & \text{ otherwise }. \\
\end{cases}
\end{equation}

\noindent
Note that $c(e)=c(e')$ and $c(f)=c(f')$ is automatic by definition of $\sim_1$. Then for $n \ge 3$ we define a relation $\sim_n$ on $\sqcup_{i=0}^n E^i$ as follows: $x \sim_n  y$ if and only if either $x=\alpha ,y=\alpha'$ with 
\begin{equation} \label{eq:nested2}
\alpha_1\cdots \alpha_{n-1} \sim_{n-1} \alpha_1 \cdots \alpha_{n-1}',  \text{ and } \alpha_2 \cdots \alpha_n \sim_{n-1} \alpha_2' \cdots   \alpha_n', \text { or } \alpha \sim_{n-1} \alpha'  .
\end{equation}

\noindent
This leads us to a relation $\sim$ on $E^*$, whose properties we summarise below.
\end{rmk}

\begin{lem}[Quotient graph] \label{lem:quotient k-colouredgraph}
Let $(E,c)$ be a $k$-coloured graph with commuting squares $\Cc$.  
Suppose that $\sim_1$ is an equivalence relation on $E^0 \sqcup E^1$
with the following properties: For $e,e' \in E^1$ if $e \sim_1 e'$ then $c(e) = c(e')$, $s(e) \sim_1 s(e')$, $r(e) \sim_1 r(e')$. Then the relation $\sim$ on $E^*$ described in Remark~\ref{rmk:simdef} above is an equivalence relation.

Set $E/{\Tiny\sim}  = (E^0/{\Tiny\sim},E^1/{\Tiny\sim},r,s)$, $c([e])=c(e)$ where $r([e])=[r(e)]$, $s([e])=[s(e)]$. Then $( E/{\Tiny\sim} , c )$ is a $k$-coloured graph with squares $\Cc / {\Tiny\sim}= \{ [e][f] {\Tiny\sim} [g][h] : ef \sim_\Cc gh \}$.
\end{lem}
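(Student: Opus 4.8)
The plan is to treat the three assertions in turn: that $\sim$ is an equivalence relation, that the structure maps descend to $E/{\sim}$, and that the squares push forward. The commuting squares $\Cc$ play no role until the very last step, so I would first set them aside and work only with the hypotheses on $\sim_1$.

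For the equivalence relation, the cleanest route is to prove by induction on $n$ the following reformulation of the nested definition in Remark~\ref{rmk:simdef}: \emph{for paths $\alpha,\beta\in E^n$ one has $\alpha\sim_n\beta$ if and only if $\alpha_i\sim_1\beta_i$ for every $1\le i\le n$.} The cases $n=1$ and $n=2$ are immediate from \eqref{eq:nested1}. For the inductive step I would observe that the prefix condition $\alpha_1\cdots\alpha_{n-1}\sim_{n-1}\alpha_1'\cdots\alpha_{n-1}'$ gives, by the inductive hypothesis, $\alpha_i\sim_1\alpha_i'$ for $1\le i\le n-1$, while the suffix condition gives $\alpha_i\sim_1\alpha_i'$ for $2\le i\le n$; together these are exactly the edgewise conditions, and conversely the edgewise conditions clearly force both the prefix and suffix conditions. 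Along the way I would record the consistency statement that $\sim_n$ restricted to $\sqcup_{i=0}^{n-1}E^i$ agrees with $\sim_{n-1}$, which is built into \eqref{eq:nested2} through the clause ``or $\alpha\sim_{n-1}\alpha'$'' (the first disjunct can only hold for paths of length exactly $n$); this is what makes $\sim:=\bigcup_n\sim_n$ a well-defined relation on $E^*$. Once the edgewise reformulation is in hand, reflexivity, symmetry and transitivity of $\sim$ follow coordinate by coordinate from the corresponding properties of the equivalence relation $\sim_1$.

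Next I would check that $E/{\sim}$ is a $k$-coloured graph, which amounts to verifying that $c$, $r$ and $s$ are well defined on classes; these are precisely the three hypotheses imposed on $\sim_1$. Namely, $e\sim_1 e'\Rightarrow c(e)=c(e')$ makes $c([e]):=c(e)$ well defined, while $e\sim_1 e'\Rightarrow r(e)\sim_1 r(e')$ and $s(e)\sim_1 s(e')$ make $r([e]):=[r(e)]$ and $s([e]):=[s(e)]$ well defined. This simultaneously shows that the quotient map $q\colon E\to E/{\sim}$, $q(v)=[v]$, $q(e)=[e]$, is a surjective coloured-graph morphism.

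Finally, for the squares I would push forward along $q$: given $\phi\in\Cc$, i.e.\ a square $\phi\colon(E_{k,\varepsilon_i+\varepsilon_j},c_E)\to(E,c)$, the composite $q\circ\phi$ is again a coloured-graph morphism out of $E_{k,\varepsilon_i+\varepsilon_j}$ (it preserves colour, range and source since both $q$ and $\phi$ do) and hence a square in $E/{\sim}$ in the sense of Definition~\ref{dfn:square}. Writing $ef\sim_\Cc gh$ for the commuting square determined by $\phi$, the pushforward $q\circ\phi$ is exactly the square recorded by $[e][f]\sim[g][h]$, so $\Cc/{\sim}=\{q\circ\phi:\phi\in\Cc\}$ is the stated collection. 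I expect the only genuinely delicate point to be transitivity of $\sim$: the overlapping prefix/suffix form of \eqref{eq:nested2} makes it look as though combining $\alpha\sim\beta$ and $\beta\sim\gamma$ could force the two rewriting steps to interfere, and the edgewise reformulation is precisely the device that dissolves this worry, since each edge slot is then handled independently. Everything else is routine well-definedness bookkeeping.
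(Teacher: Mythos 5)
Your proposal is correct, and its overall skeleton matches the paper's: establish that $\sim$ is an equivalence relation via the tower $\sim_n$, verify that $c$, $r$, $s$ descend to classes using precisely the three hypotheses on $\sim_1$, and then push the squares down to the quotient. The differences lie in the two places where the paper is terse, and in both your version is the more explicit one. For the equivalence-relation claim the paper simply asserts that each $\sim_n$ is an equivalence relation on $\sqcup_{i=0}^n E^i$ and calls the rest a straightforward calculation; your edgewise lemma --- that for length-$n$ paths $\alpha \sim_n \beta$ if and only if $\alpha_i \sim_1 \beta_i$ for every $i$, proved by induction using the overlapping prefix/suffix clauses of \eqref{eq:nested2} --- is exactly the device that makes transitivity transparent, and it subsumes the nesting facts the paper does use (the paper invokes only the $n=2$ instance, $[ef]_2 = [e]_1[f]_1$, explicitly). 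For the squares, the paper computes with classes, $[e]_1[f]_1 = [ef]_2 = [gh]_2 = [g]_1[h]_1$, and appeals to nesting, whereas you push each $\phi \in \Cc$ forward along the quotient map $q$ and observe that $q \circ \phi$ is again a coloured-graph morphism out of $E_{k,\varepsilon_i+\varepsilon_j}$; this functorial framing has the advantage of showing directly that the elements of $\Cc/{\sim}$ are squares in the sense of Definition~\ref{dfn:square}, a point the paper's class computation leaves implicit. Your consistency remark (that the second disjunct of \eqref{eq:nested2} forces $\sim_n$ to agree with $\sim_{n-1}$ on shorter paths, so $\sim = \bigcup_n \sim_n$ is well defined) is likewise implicit in the paper's ``nested'' language but worth stating as you do. In short: same route, but your decomposition through the edgewise characterisation supplies the details the paper delegates to the reader, at no cost in length or generality.
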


\begin{proof}
That $\sim$ is an equivalence relation on $E^*$ follows by showing that each $\sim_n$, $n \ge2$ is an equivalence relation on $\sqcup_{i=0}^n E^i$. The rest is a straightforward calculation.

It follows that if $x \sim_1 y$ for $x,y \in E^0 \sqcup E^1$ then either $x,y \in E^0$ or $x,y \in E^1$ as the domain of $c$ is $E^1$, and $r(v)=s(v)=v$ if $v \in E^0$. Let $[e]_1$ denote the equivalence class containing $e \in E^1$ and suppose that $e' \in [e]_1$. Then $c(e) = c(e')$, $s(e) \sim_1 s(e')$, $r(e) \sim_1 r(e')$. Hence, the formulas
\[
c_1([e]_1) := c(e), \quad r([e]_1) = [r(e)]_1,\text{ and } s([e]_1) = [s(e)]_1
\]

\noindent
are well defined and so the quotient coloured graph $E/{\Tiny\sim_1}  = (E^0/{\Tiny\sim_1},E^1/{\Tiny\sim_1},r,s)$, $c_1([e]_1)=c(e)$ makes sense. One checks
\[
(E/{\Tiny\sim} ,c) = (E^0/{\Tiny\sim},E^1/{\Tiny\sim},r,s) ~=~ (E/{\Tiny\sim_1},c_1)   = (E^0/{\Tiny\sim_1},E^1/{\Tiny\sim_1},r,s)
\]

\noindent
where $c([e])=c(e)=c([e]_1)$ as the equivalence relations $\sim_n$, $n \ge 2$ are nested (cf. \eqref{eq:nested1}, \eqref{eq:nested2}). 

Suppose $ef \sim_\Cc gh$ in $(E,c)$, then we claim that $[e][f] \sim_\Cc [g][h]$ in $(E/{\Tiny\sim},c)$. By \eqref{eq:nested1} we have $[ef]_2 = [e]_1 [f]_1$, and similarly $[fg]_2 = [f]_1 [g]_1$. Then $[e]_1 [f]_1 = [ef]_2 = [gh]_2 = [g]_1 [h]_1$ and so $[e]_1 [f]_1 \sim_\Cc [g]_1 [h]_1$, and the result follows by the nesting property of $\sim$.
\end{proof}

\begin{nota}
We shall refer to $(E/{\Tiny\sim} ,c)$ as the \emph{quotient $k$-coloured graph} of generated by the equivalence relation $\sim_1$ with quotient squares $\Cc / 
{\Tiny\sim}$.
\end{nota}

\begin{dfn}[Hereditary and co-hereditary subgraphs]
Let $E=(E^0,E^1,r,s)$ be a directed graph with $E^0,E^1$ finite. A locally convex subgraph $F=(F^0,F^1,r,s)$ is \emph{hereditary} if every finite path which enters $F$ cannot leave $F$, that is $r^{-1} ( F^1) \subset F^0$. A locally convex subgraph $F=(F^0,F^1,r,s)$ is \emph{co-hereditary} if every finite path which leaves $F$ cannot return to $F$, that is $r^{-1} ( E^1 \backslash F^1) \subset E^0 \backslash F^0$.
\end{dfn}

\begin{example}
Consider the graph $q_{al}$ in \eqref{eq:gluex} below. The locally convex subgraph $F^0 = \{ L(a) , r(a) \}$, $F^1 = \{ ( L(a),a) \}$, with inherited range and source maps is both hereditary and co-hereditary.

Now consider the graph $q_{al} \# q_{ar}$ in \eqref{eq:gluex} below. The locally convex subgraph $F$ with $F^0 = \{ R'(a), r(a), L(a),a \}$, $F^1 = \{ (r(a),R'(a)), (L(a),a), (s(a),L(a)) , (a,R'(a)) \}$, with inherited range and source maps is hereditary as it takes in the sink $a$ of $q_{al} \# q_{ar}$.
\end{example}

\noindent
The following is an analogue of \cite[Example 2.2]{KPSW}.

\begin{thm}[Gluing hereditary and co-hereditary $k$-coloured graphs] \label{thm:glue}
Let $(E_1,c_1)$, $(E_2,c_2)$ be two $k$-coloured graphs with squares $\Cc_1$, $\Cc_2$ respectively. Suppose $F_1$ (resp.\ $F_2$) are  locally convex subgraphs of $E_1$ (resp.\ $E_2$) with squares $\Cc^F_1$ (resp.\ $\Cc^F_2$). Let $\phi : F_1 \to F_2$ be a $k$-coloured graph isomorphism which sends squares in $(F_1,c_1)$ to squares in $(F_2,c_2)$. Let $G = ( E_1 \sqcup E_2 , c_1\sqcup c_2)$ be $k$-coloured graph which consists of the disjoint union of $E_1 , E_2$ induced colouring function and squares $\Cc_1 \sqcup \Cc_2$. Define a  relation $\sim_1$ on $G^0 , G^1$ by 
\begin{enumerate}
\item $v \sim_1 w$ if and only if there are $v' \in F_1^0$, $w' \in F_{2}^0$ such that $\phi (v') = v$ and  $\phi ( w' ) = w \in F_2^0$.
\item $e \sim_1 f$ if and only if $c(e)=c(f)$, and there are $e' \in F_2^1$, $f' \in F_{2}^1$ such that $\phi (e) = e'$ and $\phi ( f) =f'$.
\end{enumerate}
Then we have the following
\begin{enumerate}[label={(\roman*)}]
\item The relation $\sim_1$ is an equivalence relation and gives rise to an equivalence relation $\sim$, as defined in  Lemma~\ref{lem:quotient k-colouredgraph}, is an equivalence relation on $G^*$ and $( G/{\Tiny\sim} , c_1 \sqcup c_2 )$ is a $k$-coloured graph with squares $(\Cc_1 \sqcup \Cc_2) / {\Tiny\sim}= \{ [e][f] {\Tiny\sim} [g][h] : ef \sim_{\Cc_1} gh \text{ or } ef \sim_{\Cc_2} gh \}$.

 \item Suppose that $F_1, F_2$ are both hereditary (resp.\ co-hereditary) subgraphs of $E_1$ (resp.\ $E_2$) and that $\Cc_1 , \Cc_1 |_{F_1}$ (resp.\ $\Cc_2 , \Cc_2 |_{F_2}$) are complete sets of squares in $(E_1,c_1)$ (resp.\ $(E_2,c_2)$). Then the collection of squares $(\Cc_1 \sqcup \Cc_2) / {\Tiny\sim}= \{ [e][f] {\Tiny\sim} [g][h] : ef \sim_{\Cc_1} gh \text{ or } ef \sim_{\Cc_2} gh \}$ in $( G/{\Tiny\sim} , c_1 \sqcup c_2 )$ is complete.
\end{enumerate}
\end{thm}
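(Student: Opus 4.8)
The plan is to treat the two parts separately, deriving (i) almost entirely from Lemma~\ref{lem:quotient k-colouredgraph} and reserving the real work for the completeness claim in (ii).

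For part (i), I would first check that $\sim_1$ is genuinely an equivalence relation. Since $\phi\colon F_1\to F_2$ is a bijection on vertices and on edges, the only nontrivial classes are the two-element sets $\{v',\phi(v')\}$ for $v'\in F_1^0$ and $\{e',\phi(e')\}$ for $e'\in F_1^1$, all other classes being singletons, whence reflexivity, symmetry and transitivity are immediate. Next I would record the hypothesis needed to invoke the quotient construction: if $e\sim_1 e'$ with $e\neq e'$ then $\{e,e'\}=\{e'',\phi(e'')\}$, and in that case $c(e)=c(e')$, $s(e)\sim_1 s(e')$ and $r(e)\sim_1 r(e')$ hold precisely because $\phi$ is a coloured-graph isomorphism, so it preserves the colour map and intertwines $r$ and $s$. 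With this compatibility in hand, Lemma~\ref{lem:quotient k-colouredgraph} applies verbatim to $(G,c_1\sqcup c_2)$ and $\sim_1$, yielding the equivalence relation $\sim$ on $G^{*}$, the quotient $k$-coloured graph $(G/{\sim},c_1\sqcup c_2)$, and its square set $(\Cc_1\sqcup\Cc_2)/{\sim}$. Thus (i) is a direct application once the compatibility of $\sim_1$ with colours, range and source is verified.

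For part (ii), the target is the completeness condition of Definitions~\ref{dfn:complete-squares}: given $i\neq j$ and a $c_ic_j$-coloured path $\alpha\beta$ in $G/{\sim}$, produce a unique square of $(\Cc_1\sqcup\Cc_2)/{\sim}$ completing it. The strategy is to lift $\alpha\beta$ to a genuine $c_ic_j$-path living inside a single component $E_\ell$, complete it there using completeness of $\Cc_\ell$, and push the resulting commuting square back down. The decisive step is the lift. Choosing a representative $f$ of $\alpha$ fixes a component, and the only way a representative $g$ of $\beta$ with $r(g)\sim s(f)$ could be forced into the other component is if the shared middle vertex $s(f)=r(g)$ lies in the glued locus, that is, is the image of a vertex of $F_1$. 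This straddling configuration is exactly what the hereditary (resp.\ co-hereditary) hypothesis excludes, and isolating it is the main obstacle: if the middle vertex is glued while $f\in E_1^1\setminus F_1^1$ and $g\in E_2^1\setminus F_2^1$, then in the hereditary case $s(f)\in F_1^0$ forces $f\in F_1^1$, a contradiction, while in the co-hereditary case $r(g)\in F_2^0$ forces $g\in F_2^1$, again a contradiction. Hence no such straddle occurs, and after replacing $f$ (or $g$) by its $\phi$-image in the mixed case where one of them happens to lie in $F$, the path lifts to a $c_ic_j$-path $\tilde\alpha\tilde\beta$ inside one $E_\ell$. Completeness of $\Cc_\ell$ then yields a unique $\psi\in\Cc_\ell$ with $\tilde\alpha\tilde\beta\sim_{\Cc_\ell}\tilde\gamma\tilde\delta$, and by definition of $(\Cc_1\sqcup\Cc_2)/{\sim}$ this gives a completing square $[\tilde\alpha][\tilde\beta]\sim[\tilde\gamma][\tilde\delta]$ in the quotient.

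It remains to argue well-definedness and uniqueness, where the completeness of the restrictions $\Cc_\ell|_{F_\ell}$ and the square-preservation of $\phi$ enter. If the lifted path $\tilde\alpha\tilde\beta$ lies entirely in $F_\ell$, then completeness of $\Cc_\ell|_{F_\ell}$ produces a completing square inside $F_\ell$; since the squares of $F_\ell$ are among those of $E_\ell$, uniqueness in $\Cc_\ell$ identifies it with $\psi$, so $\psi$ and its output edges lie in $F_\ell$. As $\phi$ carries squares of $F_1$ to squares of $F_2$, the square obtained from an $E_1$-lift and the one obtained from the corresponding $E_2$-lift are $\phi$-related, hence equal in $G/{\sim}$, which shows independence of the chosen lift; when instead some edge of $\tilde\alpha\tilde\beta$ lies strictly outside $F$, the component is already forced and no ambiguity arises. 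Finally, any quotient square completing $\alpha\beta$ lifts to a square in some $\Cc_\ell$ completing a lift of $\alpha\beta$, so global uniqueness reduces to the uniqueness supplied by completeness of $\Cc_\ell$ on a fixed side together with the $\phi$-consistency just established across $F$. I expect the bookkeeping of representatives near the glued locus, rather than any single inequality, to be the genuinely delicate part.
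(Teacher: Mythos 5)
Your proposal is correct and takes essentially the same route as the paper's proof: part (i) is reduced to Lemma~\ref{lem:quotient k-colouredgraph} after checking that $\phi$, being a coloured-graph isomorphism, makes $\sim_1$ compatible with colours, range and source, and part (ii) proceeds by lifting a quotient path into a single component $E_\ell$ --- using hereditarity (resp.\ co-hereditarity) to exclude paths straddling the glued locus --- and completing it there, with completeness of $\Cc_\ell|_{F_\ell}$ and the square-preservation of $\phi$ reconciling the two possible lifts over $F$. If anything you are more careful than the paper, which treats only the hereditary case, leaves the straddling/replacement step implicit, and does not spell out the uniqueness and well-definedness bookkeeping that you supply.
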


\begin{proof}
For \emph{(i)} we begin by observing that $\sim_1$ is an equivalence relation satisfying the hypotheses of Lemma~\ref{lem:quotient k-colouredgraph} as $\phi$ is a graph morphism and so  $s(e) \sim_1 s(e')$, $r(e) \sim_1 r(e')$. Hence \emph{(i)} holds by applying this result.

For \emph{(ii)} shall concentrate on the arguments for the hereditary case, leaving the co-hereditary case out as it will be similar. Consider $gh \in G_2^2$. Without loss of generality suppose $h \in E_2^1$. Now we must break into two cases: First $h \in F_2^1$, second $h \not \in F_2^1$ (recognising that one of these cases may be vacuous). 

Firstly, as $F_2$ is hereditary $g \in F_2^1$, and then $gh \in F_2^2$.
Since $\Cc_2 |_{F_2}$ is complete there is $e'f' \in F_2^2$ such that $e'f' \sim_{\Cc_2} gh$. Since $\phi$ is bijective and square preserving, there is $ef \in F_1^2$ such that $\phi (ef)=e'f'$. By definition of $\sim$ we have $[e'][f'] \sim [g][h]$ where $[e'][f'] = [e][f] \sim_{\Cc_2} [g][h]$. Secondly, we have $gh \in E_2^2$.
Since $\Cc_2$ is complete there is $ef \in E_2^2$ such that $ef \sim_{\Cc_2} gh$. By definition of $\sim$ we have $[e][f] \sim [g][h]$ where $[e][f] \sim_{\Cc_2} [g][h]$.
\end{proof}

\begin{example}
In section~\ref{sec:two} we shall meet the $2$-coloured graphs $(q'_{al},c_1)$ and $(q_{ar},c_2)$ each with one commuting square. We wish to glue them along the hereditary subgraphs $F',F$ consisting of the edges $(L(a'),a')$, $(L(a),a)$ respectively and the accompanying range and source vertices, to form the $2$-coloured graph $(q'_{al} \# q_{ar} , c_1 \sqcup c_2 )$ as shown below. Indeed, this graph may then be then be glued along the edges $(r(a'),R'(a'))$, $(r(a'),L(a'))$, $(s(a),L(a))$ and $(s(a),R(a))$ to a related ``mirror" graph, to form the graph found in \eqref{eq:C2uncoloured}.
\begin{equation} \label{eq:gluex}
\begin{array}{c}
\begin{tikzpicture}[->=stealth,xscale=0.68,yscale=0.78]

\begin{scope}[scale=0.9,xshift=-10.6cm]

\node[inner sep=1pt,circle] (p0) at (-3.2,0) {\Tiny $r(a')$};
\node[inner sep=1pt,circle] (f0) at (-0.2,0) {\Tiny $L(a')$};
\node[inner sep=1pt,circle] (lf1) at (-6.2,0) {\Tiny $R(a')$};

\node[inner sep=1pt,circle] (a0) at (-0.2,2) {\Tiny $a'$};
\draw[thick,-latex,blue] (p0) -- (f0) node[pos=0.5,above] {\Tiny\color{black}$(r(a'),L(a'))$};
\draw[thick,-latex,red,dashed] (p0) -- (lf1) node[pos=0.3,below] {\Tiny\color{black}$(r(a'),R'(a))$};

\draw[thick,-latex,blue] (lf1) -- (a0) node[pos=0.5,above] {\Tiny\color{black}$(R(a'),a')$};
\draw[thick,-latex,red,dashed] (f0) -- (a0) node[pos=0.5,left] {\Tiny\color{black}$(L(a'),a')$};

\node at (-3,-1) {\Tiny Graph $(q'_{al},c_1)$};
\end{scope}

\begin{scope}[scale=0.88,xshift=-9.48cm]

\node[inner sep=1pt,circle] (p1) at (3,0) {\Tiny $s(a)$};
\node[inner sep=1pt,circle] (f0) at (0,0) {\Tiny $L(a)$};
\node[inner sep=1pt,circle] (rf1) at (6,0) {\Tiny $R(a)$};

\node[inner sep=1pt,circle] (a0) at (0,2) {\Tiny $a$};
\draw[thick,-latex,blue] (p1) -- (f0) node[pos=0.5,above] {\Tiny\color{black}$(s(a),L(a))$};
\draw[thick,-latex,red,dashed] (p1) -- (rf1) node[pos=0.3,below] {\Tiny\color{black}$(s(a),R(a))$};
\draw[thick,-latex,red,dashed] (f0) -- (a0) node[pos=0.5,right] {\Tiny\color{black}$(L(a),a)$};
\draw[thick,-latex,blue] (rf1) -- (a0) node[pos=0.5,above] {\Tiny\color{black}$(R(a),a)$};

\node at (3,-1) {\Tiny Graph $(q_{ar},c_2)$};
\end{scope}

\begin{scope}[xshift=4cm,scale=0.88]

\node[inner sep=1pt,circle] (p0) at (-3,0) {\Tiny $r(a)$};
\node[inner sep=1pt,circle] (p1) at (3,0) {\Tiny $s(a)$};
\node[inner sep=1pt,circle] (f0) at (0,0) {\Tiny $L(a)$};
\node[inner sep=1pt,circle] (lf1) at (-6,0) {\Tiny $R'(a)$};
\node[inner sep=1pt,circle] (rf1) at (6,0) {\Tiny $R(a)$};
\node[inner sep=1pt,circle] (a0) at (0,2) {\Tiny $a$};

\draw[thick,-latex,blue] (p0) -- (f0) node[pos=0.5,above] {\Tiny\color{black}$(r(a),L(a))$};
\draw[thick,-latex,red,dashed] (p0) -- (lf1) node[pos=0.3,below] {\Tiny\color{black}$(r(a),R'(a))$};
\draw[thick,-latex,red,dashed] (p1) -- (f0) node[pos=0.5,above] {\Tiny\color{black}$(s(a),L(a))$};
\draw[thick,-latex,blue] (p1) -- (rf1) node[pos=0.3,below] {\Tiny\color{black}$(s(a),R(a))$};
\draw[thick,-latex,blue] (lf1) -- (a0) node[pos=0.5,above] {\Tiny\color{black}$(a,R'(a))$};
\draw[thick,-latex,dashed,red] (f0) -- (a0) node[pos=0.5,right] {\Tiny\color{black}$(L(a),a)$};
\draw[thick,-latex,blue] (rf1) -- (a0) node[pos=0.5,above] {\Tiny\color{black}$(a,R(a))$};

\node at (0,-1) {\Tiny Graph $(q'_{al} \# q_{al},c_1 \sqcup c_2)$};

\end{scope}

\end{tikzpicture}
\end{array}
\end{equation}
\end{example}

\noindent
Conversely we may ``cut-off" hereditary and cohereditary locally convex subgraphs from a $k$-coloured graph. 

\begin{rmk}
Suppose that  $(F,c)$ is a hereditary subgraph of $(E,c)$ and $(fe,e'f')$ is a commuting square in $E$, then if $s(e) \in F^0$ then $e,f,e'f \in F^1$ similarly if $(F,c)$ is a cohereditary subgraph of $(E,c)$ and $s(e) \not\in F^0$ then $e,f,e',f' \in E^1 \backslash F^1$.
\end{rmk}

\begin{thm}
Let $(E,c)$ be a $k$-coloured graph with complete set of squares $\Cc$ and $(F,c)$ be a hereditary (resp. cohereditary) subgraph. Let $(E\backslash F ,c) := ( ( E^0 \backslash F^0 , E^1 \backslash F^1 ,r,s) , c)$,
\[
 \Cc |_{E\backslash F} = \{ (fe,e'f') \in \Cc : s(e) \notin F^0 \}  , \ \Cc |_{F}
= \{ (fe,e'f') \in \Cc : s(e) \in F^0 \}
\]

\noindent
Then $(F, c) , \Cc |_{F}$ is a $k$-coloured graph with a complete set of squares (resp. $(E \backslash F, c), \Cc |_{E \backslash F}$ is a $k$-coloured graph with a complete set of squares).
\end{thm}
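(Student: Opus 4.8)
The plan is to read off both assertions from the Remark immediately preceding the statement, combined with the completeness of $\Cc$ in $(E,c)$. The two cases are dual, so I would carry out the hereditary case in full and obtain the cohereditary case by replacing the condition $s(e)\in F^0$ with $s(e)\notin F^0$ and ``$\in F^1$'' with ``$\in E^1\setminus F^1$'' throughout; the hereditary half of the Remark is replaced by its cohereditary half.

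First I would check that $\Cc|_{F}$ really is a collection of squares of $(F,c)$. If $(fe,e'f')\in\Cc|_{F}$ then $s(e)\in F^0$ by definition, and the Remark forces all four edges $e,f,e',f'$ into $F^1$; since $F$ is a subgraph their endpoints lie in $F^0$, so each element of $\Cc|_{F}$ is a genuine square of $(F,c)$. For completeness, fix $i\ne j$ and a $c_ic_j$-coloured path $fg\in F^{2}$. As $F^{1}\subseteq E^{1}$, this is also a $c_ic_j$-coloured path of $E^{2}$, so completeness of $\Cc$ supplies a unique $\phi\in\Cc$ with $\phi(f_i^{0})=f$ and $\phi(f_j^{\varepsilon_i})=g$. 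In the notation $(fg,g'f')$ of Definitions~\ref{dfn:complete-squares}, the source corner of $\phi$ is $\phi(\varepsilon_i+\varepsilon_j)=s(g)$, and $s(g)\in F^0$ because $g\in F^{1}$. Hence $\phi\in\Cc|_{F}$, and the Remark additionally places the remaining two edges $g',f'$ of $\phi$ in $F^{1}$, so $\phi$ is a square of $(F,c)$. This is the required matching square; uniqueness is automatic, since $\Cc|_{F}\subseteq\Cc$ and $\phi$ is already unique in $\Cc$. Thus $\Cc|_{F}$ is complete.

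For the cohereditary case I would run the identical argument with $E\setminus F$ in place of $F$: given $fg\in(E\setminus F)^{2}$ the source corner $s(g)$ lies in $E^{0}\setminus F^0$, so the unique $\Cc$-square matching $fg$ satisfies $s(g)\notin F^0$ and hence belongs to $\Cc|_{E\setminus F}$, while the cohereditary half of the Remark keeps all four of its edges in $E^{1}\setminus F^{1}$. The only extra point is to confirm that $(E\setminus F,c)=((E^0\setminus F^0,E^1\setminus F^1,r,s),c)$ is a bona fide $k$-coloured graph, i.e.\ that each edge of $E^1\setminus F^1$ has both endpoints in $E^0\setminus F^0$; this I would extract from $F$ being a locally convex cohereditary subgraph.

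The step I expect to be the real (if modest) obstacle is this directional bookkeeping rather than any deep structural argument. The crux is that whether the matching square from $\Cc$ lands inside the restricted family $\Cc|_{F}$ (resp.\ $\Cc|_{E\setminus F}$) is governed \emph{exactly} by the position of the single corner $s(g)=\phi(\varepsilon_i+\varepsilon_j)$ of the input path, and one must be sure the (co)hereditary hypothesis --- as packaged in the Remark --- then propagates this to the whole square. Once the Remark is invoked nothing further is needed: every path in sight has length two, so no square-completion beyond $\Cc$ and no associativity condition can intervene.
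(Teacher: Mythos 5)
Your proposal is correct and takes essentially the same route as the paper: the paper's own proof is a two-line observation that the squares in $\Cc|_F$ are exactly those of $\Cc$ with all vertices inside $F$ (the content of the preceding Remark, which you invoke explicitly), so that completeness of $\Cc$ for $E$ passes directly to $\Cc|_F$ for $F$, with the cohereditary case handled ``by a similar argument'' --- precisely your corner-vertex bookkeeping, spelled out in more detail. The one point you defer, that $(E\setminus F,c)$ is a well-formed $k$-coloured graph, is likewise left implicit in the paper, so your write-up neither diverges from nor falls short of the published argument.
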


\begin{proof}
In the case $(F, c)$, with commuting squares $\Cc |_{F}$,  the commuting squares are exactly the ones with vertices within $F$ and since $\Cc$ is complete for $E^0$ it follows that $ \Cc |_{F}$ is complete for $F^0$. A similar argument applies for $(E \backslash F, c)$, with commuting squares $\Cc |_{E \backslash F}$.
\end{proof}

\section{Quadrangle clubs built from polyhedral graphs} \label{sec:two}

\subsection{Convex polyhedra and polyhedral graphs} 

Roughly speaking, a \emph{convex polyhedron} and a \emph{polyhedral graph} are one and the same thing, but in a different place. The first of which is an undirected graph embedded in the sphere and the second is a planar graph derived  from the former by stereographic projection (clearly this process is reversible). If the graph embedded in the sphere is a tiling of the sphere, then it is called a \emph{spherical polyhedron}.

In the diagram below we have the convex polyhedron with points $v_1,v_2$ on the equator together with arcs $a_0$ round the left equator, $a_2$ round the right equator and $a_1$ round the south pole. This is a spherical polyhedron and maps via stereographic projection to the polyhedral graph, $C_2$, shown on the right.
\begin{equation} \label{eq:conv-poly}
\begin{array}{l}
\begin{tikzpicture}[scale=2,->=stealth]

\node[inner sep= -1pt] (w1) at (xyz spherical cs:longitude=0,latitude=0,radius=1) {\tiny $ $};
    \node[inner sep= -1pt] (y0) at (xyz spherical cs:longitude=45,latitude=0,radius=1) {\small$  $};
    \node[inner sep= -1pt] (v1) at (xyz spherical cs:longitude=90,latitude=0,radius=1) {\tiny $.$};
    \node[inner sep= -1pt] (x0) at (xyz spherical cs:longitude=135,latitude=0,radius=1) {\tiny $.$};
    \node[inner sep= -1pt] (w0) at (xyz spherical cs:longitude=180,latitude=0,radius=1) {\tiny $.$};
    \node[inner sep= -1pt] (y1) at (xyz spherical cs:longitude=225,latitude=0,radius=1) {\tiny $.$};
    \node[inner sep= -1pt,left] (v0) at (xyz spherical cs:longitude=270,latitude=0,radius=1) {\tiny $.$};
    \node[inner sep= -1pt] (x1) at (xyz spherical cs:longitude=315,latitude=0,radius=1) {\tiny $  $};
    \node[inner sep= -1pt] (u1) at (xyz spherical cs:longitude=0,latitude=90,radius=1) {\tiny $a_2$};
    \node[inner sep= -1pt] (u0) at (xyz spherical cs:longitude=0,latitude=-90,radius=1) {\tiny $a_0 $};
    \draw[-,dotted, out=40, in=185] (x1.75) to (w1.west);
    \draw[- ,dotted,out=235, in=90] (x1.220) to (v0.north);
    \draw[-, out=125, in=270] (y1.140) to (v0.south);
    \draw[-, out=320, in=175] (y1.285) to (w0.west);
    \draw[-, out=220, in=5] (x0.255) to (w0.east);
    \draw[-, out=55, in=270] (x0.40) to (v1.south);
    \draw[-,dotted, out=140, in=355] (y0.105) to (w1.east);
    \draw[white, line width=2pt] (w1.255) .. controls +(0,0,0.4) and +(0,0.4,0) .. (u1.north);
    \draw[-,dotted] (w1.255) .. controls +(0,0,0.4) and +(0,0.4,0) .. (u1.north);
    \draw[-,dotted] (w0.240) .. controls +(0,0,0.4) and +(0,-0.4,0) .. (u1.south);
    \draw[-,dotted] (w0.45) .. controls +(0,0,-0.4) and +(0,-0.4,0) .. (u0.south);
    \draw[-,dotted] (w1.20) .. controls +(0,0,-0.2) and +(0,0.4,0) .. (u0.north);
    \draw[-,dotted, out=305, in=90] (y0.320) to (v1.north);

    \draw[-] (v0.60) .. controls +(0,0,-0.4) and +(-0.4,0,0) .. (u0.west);
    \draw[-] (v0.225) .. controls +(0,0,0.3) and +(-0.3,0,0) .. (u1.west);
    \draw[-] (v1.240) .. controls +(0,0,0.3) and +(0.4,0,0) .. (u1.east);
    \draw[-] (v1.45) .. controls +(0,0,-0.2) and +(0.3,0,0) .. (u0.east);
    \node[left] at (v0) {\tiny $v_1$};
    \node[below] at (w0) {\tiny $a_1$};
    \node[right] at (v1) {\tiny $v_2$};
    
\node at (0,-1.3) {Spherical polyhedron};

\begin{scope}[xshift=3.7cm,yshift=0cm]

\node at (-1,0.4) {Stereographic projection};
\node at (-1,0) {$\Rightarrow$};

\node[inner sep=1pt, circle] (00) at (0,0) {\tiny $v_1$};
\node[inner sep=1pt, circle] (10) at (2,0) {\tiny $v_2$};

\draw[-] (00)--(10) ;
\draw[-] (00)--(10);
\draw[-,dotted] (1,0.35)--(2.5,0);
\draw[-,dotted] (1,-0.35)--(2.5,0);
\draw[-,dotted] (1,-0.35)--(1,0.35);
     
\draw[-] (10) .. controls +(-1,+0.6) .. (00);
\draw[-] (10) .. controls +(-1,+0.6) .. (00);

\node[below] at (1,-0.46) {\tiny\color{black} $a_2$};
\node[above] at (1,0.46) {\tiny\color{black} $a_0$};
\node[below] at (0.7,-0.02) {\tiny\color{black} $a_1$};
     
\draw[-] (10) .. controls +(-1,-0.6) .. (00);
     
\draw[-] (10) .. controls +(-1,-0.6) .. (00);
     
\node at (1.2,-1.25) {Polyhedral graph};
\node[inner sep=3pt] at (1,-0.35) {\Tiny $r_1$};
\node[inner sep=3pt] at (1,0.35) {\Tiny $r_2$};
\node[inner sep=3pt] at (2.5,0) {\Tiny $r_0$};
\end{scope}

\end{tikzpicture}
\end{array}
\end{equation}

\noindent
Our starting point is, as in \cite[\S 1]{Lambek}, a convex  polyhedron $\mathfrak{P}=(P,A)$ (called Eulerian in \cite{Lambek}) described by finite sets of points and arcs $(P, A)$, inscribed on the sphere $S^2$. Since the graph is embedded on a sphere there is no distinction between the bounded faces and the unbounded face of the graph. So the set $P$ is a finite set $p$ of \emph{points} on the sphere, and $A$ is a collection of mutually disjoint subsets $a$ of $S^2 \setminus P$, called \emph{arcs}.  Each arc is homeomorphic to the interval $(0,1)$ such that for each $a \in A$ there are distinct $r(a), s(a) \in P$ such that $\overline{a} = a \cup \{r(a), s(a)\}$ (that is $\mathfrak{P}$ has no loops/cycles), and no other arc $b \neq a$ satisfies $\overline{b}=b \cup \{r(a), s(a)\}$ (that is $\mathfrak{P}$ has no parallel edges). Hence we may identify $A$ with $\{ (v,w) \in P^2 : r(a) =v \neq  w=s(a) \text{ some } a \in A \}$. For each $a \in A$ the points $r(a), s(a) \in P$ are said to be \emph{neighbours}. The interior of each arc contains no vertex and no point of an other arc (that is $\mathfrak{P}$ is planar). The \emph{valency} (or degree) $d_{\mathfrak{P}} (v)$ of a point $v \in P$ in the graph $\mathfrak{P}=(P,A)$ is the number of neighbours of $v$.

The complement of $\cup_{p\in P} p \cup \bigcup_{a \in A} a$ on the sphere then consists of $|A| - |P| + 2$ distinct open sets which we call the \emph{faces}. We write $F$ for the set of faces and typically denote its elements by $f$. By hypothesis we have the following three adjacencies: \emph{an arc $a$ is adjacent to a face} $f$ if $a \subseteq \overline{f}$, \emph{an arc $a$ is adjacent to a point} $p$ if $p \in \overline{a}$, and \emph{a point $p$ is adjacent to a face} $f$ if $p \in \overline{f}$. 

\textbf{Standing assumption:} We assume that $\mathfrak{P}$ is connected, and that every point is adjacent to two arcs (that is $\mathfrak{P}$ has no sinks/sources).  For more information on planar graphs see \cite[\S4]{D}.

\noindent
We shall need to use some of the methods of Johnstone \cite{Johnstone} found in his paper on embeddings of categories in a groupoid. Since we will not require the full generality of Johnstone's set up, we describe a specific arrangement  which he calls a quadrangle club, which arises from a Lambek diagram as in \cite{Lambek}. The specific quadrangle clubs we consider are the ones described in \cite[\S 6]{Johnstone}.

\subsection{Quadrangle clubs and half-arc Lambek conditions} \label{sec:Plabel}

Johnstone's method for constructing a quadrangle club involves placing a vertex in the middle of each arc to produce two \emph{half-arcs}: For each arc $a$ in the convex polyhedron, this produces two half arcs called $(r(a),a)$ and $(a,s(a))$ by dividing the arc $a$ in two, with ``midpoint" labelled $a$. Each half-arc maintains the face-adjacency ($L=$ Left,$R=$ right) of $a$, that is $L((r(a),a) = L(a)= L((a,s(a))$ and $R((r(a),a) = R(a) = R((a,s(a))$(see \eqref{eq:half-arcs}).

\begin{dfn}[Lambek quadrangle club]
Let $\mathfrak{P}=(P, A)$ a polyhedral graph described as above, define 
\begin{equation} \label{eq:lqc}
E^0 = P \sqcup A \sqcup F \ \text{ and } \
E^1 = \{(a, f) \in A \times F : a \in \overline{f}\} \cup \{(f, p) \in F \times P : p \in \overline{f}\}.
\end{equation}

\noindent
The range and source maps are defined by $r(x,y) = x$ and $s(x,y) = y$. That is, there is an edge in $E^1$ from each point in $P$ to each face to which it is adjacent, and there is an edge in $E^1$ from each face in $F$ to each arc adjacent to it. This graph $E = E(\mathfrak{P}) = (E^0, E^1, r, s)$ is the \emph{Lambek quadrangle club} associated to $\mathfrak{P} = (P, A)$. (This determines a Lambek quadrangle club in the sense of \cite[Definition~1.2]{Johnstone}, if we colour elements of $P$ blue, elements of $F$ red and elements of $A$ green, and we colour elements of $(A \times F) \cap E^1$ purple, and elements of $(F \times P) \cap E^1$ yellow).
\end{dfn}

\noindent
Note that $E$ is a directed graph with $E^3 = \emptyset$, that is there are no paths of length three in $E$. Our standing assumption entails that $E$ has no cycles/loops, parallel edges or sources/sinks.

\begin{lem}
Let $\mathfrak{P}=(P,A)$ be a polyhedral graph and $E$ its associated Lambek quadrangle club, then $E$ is planar. 
\end{lem}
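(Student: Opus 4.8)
The plan is to exhibit an explicit embedding of $E$ on the sphere $S^2$, built directly from the given embedding of $\mathfrak{P}$; since planarity is equivalent to embeddability on $S^2$ (via stereographic projection, as already used in~\eqref{eq:conv-poly}), this suffices. First I would place the nodes of $E$: each point $p \in P$ at its location in $\mathfrak{P} \subseteq S^2$, each arc-node $a \in A$ at the midpoint of the arc $a$ (exactly the subdivision point used to form the half-arcs), and each face-node $f \in F$ at an interior point of the open face $f$. It then remains to route the two families of edges of $E$ without crossings.

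The key observation is that every edge of $E$ is incident to exactly one face-node: an edge of the form $(a,f)$ or $(f,p)$ always involves the face $f$. I would assign each such edge to its unique face $f$ and draw it inside the closed region $\overline{f}$. For a type-$(a,f)$ edge the arc-midpoint $a$ lies on $\partial f$ (since $a \subseteq \overline{f}$), and for a type-$(f,p)$ edge the point $p$ lies on $\partial f$ (since $p \in \overline{f}$); in both cases the edge runs from a boundary node of $\overline{f}$ into the interior node $f$.

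Because $\mathfrak{P}$ comes from a convex polyhedron, each face is a topological disk whose boundary is a cycle passing alternately through points and arc-midpoints. Placing $f$ at the centre of this disk and drawing the incident edges of $E$ as radial spokes to the relevant boundary nodes produces a non-crossing family inside $\overline{f}$. Two edges assigned to the same face are spokes of the same disk and hence meet only at $f$; two edges assigned to distinct faces $f \neq f'$ have interiors lying in the disjoint open regions $f$ and $f'$ and can share at most a boundary node (a common point $p$, or a common arc-midpoint $a$, the latter bordering exactly the two faces it separates). In every case the shared points are genuine vertices of $E$, so no crossings occur and the embedding is valid.

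The main obstacle is the no-crossing verification, which I have reduced to the disk-and-spoke picture inside each face; the single hypothesis it leans on is that each face of $\mathfrak{P}$ is a disk with a well-defined boundary cycle, guaranteed by convexity (equivalently, $3$-connectivity via Steinitz's theorem). Alternatively, and more structurally, one may observe that $E$ is a spanning subgraph of the $1$-skeleton of the barycentric subdivision of $\mathfrak{P}$ regarded as a CW-decomposition of $S^2$: the vertices of that subdivision are exactly the cells $P \sqcup A \sqcup F = E^0$, and its edges join comparable cells, so it carries precisely the arc--face and face--point edges of $E$ together with the point--arc edges that $E$ omits. That subdivision is a triangulation of the sphere and hence planar, and a subgraph of a planar graph is planar; this packages the crossing analysis into the standard fact that barycentric subdivision preserves the underlying space.
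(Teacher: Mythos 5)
Your proof is correct, and it takes a genuinely different route from the paper's. The paper argues indirectly via Kuratowski's theorem: it notes that every edge of $E$ runs either from $A$ to $F$ or from $F$ to $P$, deduces that all cycles in the undirected graph $\bar{E}$ are even (excluding $K_5$, which contains triangles), and then attempts to exclude $K_{3,3}$ by a bipartiteness argument. You instead construct an explicit spherical embedding --- points of $P$ at their locations, arc-nodes at the subdivision midpoints, face-nodes in face interiors, and each edge of $E$ drawn as a spoke inside the closed disk $\overline{f}$ of its unique incident face --- and, more structurally, you observe that $\bar{E}$ is a subgraph of the $1$-skeleton of the barycentric subdivision of the CW-structure that $\mathfrak{P}$ places on $S^2$, which is a triangulation of the sphere; planarity then follows since subgraphs of planar graphs are planar. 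Your route buys two things. First, it is constructive: it produces exactly the drawings the paper uses afterwards (the club drawn on top of $\mathfrak{P}$ in \eqref{eq:c1-coloured} and \eqref{eq:4c}), so the lemma arrives together with the picture. Second, it is more robust than the Kuratowski route as executed in the paper, which contains real tension: since every edge of $E$ is incident to exactly one face-node, $\bar{E}$ is in fact bipartite with parts $F$ and $P \sqcup A$ (consistent with all cycles being even), so the paper's claim that $\bar{E}$ is \emph{not} bipartite cannot stand; moreover non-bipartiteness would not exclude $K_{3,3}$ as a subgraph anyway (a graph containing $K_{3,3}$ need not be bipartite), and Kuratowski requires excluding \emph{subdivisions} of $K_5$ and $K_{3,3}$, not merely subgraph copies. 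Your barycentric-subdivision observation packages all of these verifications into a standard fact. One small point to make explicit in your disk-and-spoke version: the hypothesis that each face is an open disk needs connectedness of $\mathfrak{P}$ (part of the paper's standing assumption), and since edges of $E$ are the set-theoretic pairs $(a,f)$ and $(f,p)$, at most one spoke joins $f$ to any boundary node, so no parallel spokes arise even where a face's boundary walk repeats a node --- with that noted, your argument is complete and, in the author's setting of convex (hence, by Steinitz, $3$-connected) polyhedra, the boundary cycles are even simple.
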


\begin{proof}
To confirm planarity we must consider the undirected version $\bar{E}$ of $E$, in which every edge $e \in E^1$ has an opposite $\bar{e}$ with the opposite  direction. Consideration of the range map on $E$ shows that $r : F \to A$ and $r : P \to F$. A moment's thought will show that, in the undirected case, all cycles in $\bar{E}$ have even length. Hence $\bar{E}$ cannot contain the Peterson graph $K_5$ as a subgraph. Next we claim that $\bar{E}$ is not bipartite. Consideration of the source map on $E$ shows that $s : A \to F$ and $s : F \to P$. Suppose, for contradiction, that $\bar{E}$ is bipartite, then $E^0 = U \sqcup V$. From the properties of the range and source maps we have  $F \subset U$ and $F \subset V$, a contradiction. Hence $\bar{E}$ is not bipartite and so does not contain the complete bipartite graph $K_{3,3}$. Therefore by Kuratowski's theorem 
\cite[Th\'{e}or\`{e}me B]{Kur} $\bar{E}$, and hence $E$ is planar.
\end{proof}

\noindent
Following \cite[Definition 1.1(3)]{Johnstone} we have:

\begin{dfn}[Quadrangle] \label{dfn:addquads}
A \emph{quadrangle} $q$ of a Lambek quadrangle club $E=(E^0, E^1, r, s)$ as above is a subgraph parameterised by a half-arc of $\mathfrak{P}=(P,A)$ with four vertices: one from $P$, two from $F$ and one from $A$ and four edges derived from the half arc.
\end{dfn}

\noindent 
Each arc $a \in A$ gives rise to two quadrangles $q_{ar}$ and $q_{al}$, from its two half-arcs as follows: 
\begin{itemize}
\item[$q_{ar}$:] Consists of the vertices $r(a), a , L(a), R(a)$, where $L(a)$ and $R(a)$ are the two faces to which the arc $a$ is adjacent. Here we let  $L(a), R(a)$ be determined by the orientation of the sphere. Another choice would give different directions. The four edges in the quadrangle connect these vertices as shown to the right below.  
\item[$q_{al}$:] Consists of the vertices $s(a), a, L(a), R(a)$. The four edges in the quadrangle connect these vertices as shown to the left below.
\end{itemize}
\begin{equation} \label{eq:half-arcs}
\begin{array}{ll}
&\begin{tikzpicture}

\filldraw[fill=gray!10,dashed] (0,-1.5)--(-3,0)--(0,1.5);

\node at (0,0) {\tiny $a$};
\node at (3.3,0) {\tiny $s(a)$};
\node at (-3.3,0) {\tiny $r(a)$};
\node at (0,1.7) {\tiny $L(a)$};
\node at (0,-1.7) {\tiny $R(a)$};
\node at (0,-2.3) {Half-arcs $(r(a),a)$, $(a,s(a))$, giving};
\node at (0,-2.8) {vertices $a,r(a),L(a)$, $s(a),R(a)$ in $E$};

\draw (-3,0) -- (-0.3,0);
\draw(0.3,0) -- (3,0);
\node at (-1.1,-0.3) {\tiny $(r(a),a)$};
\node at (1.1,-0.3) {\tiny $(a,s(a))$};

\draw[dashed] (0,1.5) -- (0,0.2);
\draw[dashed](0,-1.5) -- (0,-0.2);
\draw[dashed] (0,1.5) -- (3,0);
\draw[dashed] (0,-1.5) -- (3,0);

\begin{scope}[xshift=6.5cm]
\node at (-0.2,-2) {\text{The quadrangle $q_{a l}$ in $E$}};
\node at (-0.2,-2.5) {\text{for the half-arc $(r(a),a)$}};
\node at (0,-3) { };
\node[circle, inner sep=0pt] (t) at (0,1.5) {\small $L(a)$};
\node[circle, inner sep=0pt] (l) at (0,0) {\small $a$};
\node[circle, inner sep=0pt] (n) at (-1.5,0) {\small $r(a)$};
\node[circle, inner sep=0pt] (b) at (0,-1.5) {\small $R(a)$};
\draw[-stealth] (t)--(l) node[inner sep=0pt, pos=0.5, right] {\small $\ (a,L(a))  $};
\draw[-stealth] (n)--(t) node[inner sep=0pt, pos=0.5, left] {\small $(L(a),r(a)) \ \ $};
\draw[-stealth] (b)--(l) node[inner sep=0pt, pos=0.5, right] {\small $\ (a,R(a))  $};
\draw[-stealth] (n)--(b) node[inner sep=0pt, pos=0.5, left] {\small $(R(a),r(a)) \ \ $};

\end{scope}

\begin{scope}[xshift=11cm]

\node at (0.2,-2) {\text{The quadrangle $q_{a r}$ in $E$}};
\node at (0.2,-2.5) {\text{for the half-arc $(a,s(a))$}};
\node at (0,-3) { };
\node[circle, inner sep=0pt] (t) at (0,1.5) {\small $L(a)$};
\node[circle, inner sep=0pt] (l) at (0,0) {\small $a$};
\node[circle, inner sep=0pt] (r) at (1.5,0) {\small $s(a)$};
\node[circle, inner sep=0pt] (b) at (0,-1.5) {\small $R(a)$};
\draw[-stealth] (t)--(l) node[inner sep=0pt, pos=0.5, left] {\small $(a,L(a)) \ $};
\draw[-stealth] (r)--(t) node[inner sep=0pt, pos=0.5, anchor=south west] {\small $(L(a),s(a))$};
\draw[-stealth] (b)--(l) node[inner sep=0pt, pos=0.5, left] {\small $(a,R(a)) \ $};
\draw[-stealth] (r)--(b) node[inner sep=0pt, pos=0.5, anchor=north west] {\small $(R(a),s(a))$};

\end{scope}

\end{tikzpicture} \\
\end{array}
\end{equation}

\begin{dfn}[Half-arc Lambek conditions]
Let $\mathfrak{P}=(P,A)$ be a polyhedral graph and $E$ the associated Lambek quadrangle club. Fix $a \in A$. The \emph{half-arc Lambek conditions} for the half-arc pair $\{ (a,s(a)), (r(a),a) \}$ are equations $h_{al}, h_{ar}$ in the category $E^*$, formed from the quadrangles $q_{a l}, q_{a r}$ respectively as shown below
\begin{align} 
h_{a l} &:=  ~~ (a,R(a)) (R(a),r(a)) = (a,L(a)) (L(a),r(a)) ,  \text{ and } \label{eq:ha-left} \\
h_{a r} &:= ~~(a,R(a))  (R(a),s(a)) = (a,(L(a)) (L(a),s(a)) . \label{eq:ha-right}
\end{align}
\end{dfn}

\begin{rmks} \label{rmks:half-arc-count}
When the arcs in a polyhedral graph are presented as a enumerated list, say $A = \{ a_1 , \ldots , a_{|A|} \}$ then we use the subscript $i$ in place of $a_i$ in the name of the quadrangles $q_{il}, q_{ir}$ and half-arc Lambek conditions $h_{il}, h_{ir}$ associated to $a_i \in A$, $i=1, \ldots , |A|$.

There are $2|A|$ half-arc Lambek conditions. They are of the form $xy=y'x'$ where $x,x' \in A \times F$ and $y,y'\in F \times P$. Fix $a\in A$, the edge $(a,R(a))$ (resp.\ $(a,L(a))$) appears twice in the totality of half-arc Lambek conditions, always on the left of the condition $xy=y'x'$ (resp.\ right), once in \eqref{eq:ha-left} and once in \eqref{eq:ha-right} and can appear nowhere else. The edge $(L(a),r(a))$ also appears twice, once in \eqref{eq:ha-left} (on the right) and once in $h_{bl}$ where $s(b) =r(a)$ (on the left)
\end{rmks}

\begin{example} \label{ex:C1}
Consider the following polyhedral graph $C_1=(P,A)$ which has been drawn and labelled helpfully to closely resemble its Lambek quadrangle club. Its corresponding convex polyhedron may be formed by deleting the edge $a_1$ for $C_2$ as shown in \eqref{eq:conv-poly} and renumbering.
\[
\begin{tikzpicture}[scale=3.5]

\node[inner sep=1pt, circle] (00) at (0,0) {$\bullet$};
\node[inner sep=1pt, circle] (10) at (1,0) {$\bullet$};

\node at (0.5,0) {\tiny $r_1$};
\node at (-0.4,0) {\tiny $r_0$};
\node at (1.4,0) {\tiny $r_0$};

\draw (10) .. controls +(-0.5,+0.35) .. (00) node[pos=0.5,anchor=north,inner sep=1pt, black] {\tiny$a_0$};

\draw (00) .. controls +(0.5,-0.35) .. (10) node[pos=0.5,anchor=south,inner sep=1pt, black] {\tiny$a_1$};

\node[left] at (-0.02,-0.02) {\tiny\color{black} $v_1$};

\node[right] at (1.02,-0.02) {\tiny\color{black} $v_2$};

\node at (2.4,0.15) {Lambek quadrangle club};
\node at (2.5,0) {$E^0 = \{ v_1 , v_2 ,a_0 , a_1 , r_0 , r_1 \}$ and};
\node at (2.78,-0.15){$E^1 = \{ ( a_0 , r_0 ) , ( a_0 , r_1 ) , ( a_1 ,r_0 ) , ( a_1 , r_1) \}\cup$};
\node at (3.2,-0.3) {$\{ (  r_0 , v_1 ) , (  r_1 ,v_1 ) , (r_0 , v_2) , ( r_1 , v_2 ) \} .$};
\end{tikzpicture}
\]

\noindent
Here below we draw the quadrangle club graph $E$, we draw it on on top of polyhedral graph $(P,A)$ so that one can see how it is constructed. The vertex $r_0$, corresponding to the infinite region $r_0$, is repeated to make the picture easier to visualise when drawn on paper. A choice of orientation $R(a_0)=r_0=R(a_1)$ has been made.
\begin{equation} \label{eq:C2uncoloured}
\begin{array}{c}
\begin{tikzpicture}[->=stealth]

\node[inner sep=1pt,circle] (p0) at (-3,0) {\Tiny $v_1$};
\node[inner sep=1pt,circle] (p1) at (3,0) {\Tiny $v_2$};

\node[inner sep=1pt,circle] (f0) at (0,0) {\Tiny $r_1$};

\node[inner sep=1pt,circle] (lf1) at (-6,0) {\Tiny $r_0$};
\node[inner sep=1pt,circle] (rf1) at (6,0) {\Tiny $r_0$};

\node[inner sep=1pt,circle] (a1) at (0,-2) {\Tiny $a_1$};
\node[inner sep=1pt,circle] (a0) at (0,2) {\Tiny $a_0$};


\draw[dotted] (2.8,0) parabola bend (0,1.8) (-2.8,0);

\draw[dotted] (-2.8,0) parabola bend (0,-1.8) (2.8,0);


\draw[thick,-latex] (p0) -- (f0) node[pos=0.5,above] {\tiny\color{black}$(r_1,v_1)$};

\draw[thick,-latex] (p0) -- (lf1) node[pos=0.3,above] {\tiny\color{black}$(r_0,v_1)$};

\draw[thick,-latex] (p1) -- (f0) node[pos=0.5,above] {\tiny\color{black}$(r_1,v_2)$};

\draw[thick,-latex] (p1) -- (rf1) node[pos=0.3,above] {\tiny\color{black}$(r_0,v_2)$};


\draw[thick,-latex] (lf1) -- (a0) node[pos=0.5,above] {\tiny\color{black}$(a_0,r_0)$};

\draw[thick,-latex] (f0) -- (a0) node[pos=0.6,right] {\tiny\color{black}$(a_0,r_1)$};

\draw[thick,-latex] (rf1) -- (a1) node[pos=0.5,above] {\tiny\color{black}$(a_1,r_0)$};

\draw[thick,-latex] (f0) -- (a1) node[pos=0.6,right] {\tiny\color{black}$(a_1,r_1)$};


\draw[thick,-latex] (rf1) -- (a0) node[pos=0.5,above] {\tiny\color{black}$(a_0,r_0)$};

\draw[thick,-latex] (lf1) -- (a1) node[pos=0.5,above] {\tiny\color{black}$(a_1,r_0)$};

\end{tikzpicture}
\end{array}
\end{equation}

\noindent
There are two pairs of half-arcs $\{ (a_0 , v_1 ), ( a_0 , v_2 ) \}$, $\{ (a_1 , v_2 ) , (a_1,v_1  )\}$ giving rise to four quadrangles $q_{0l}, q_{0r}$, $q_{1l}, q_{1r}$ in $E$. The half-arc Lambek conditions are
\begin{equation} \label{eq:c1-lambek}
\begin{array}{rl}
h_{0l} :=  (a_0,r_0)(r_0,v_1)=(a_0,r_1)(r_1,v_1) & \quad
h_{0r} := (a_0,r_0) (r_0,v_2)=(a_0,r_1)(r_1,v_2) \\
h_{1l} :=  (a_1,r_0) (r_0,v_1)=(a_1,r_1)(r_1,v_1) & \quad
h_{1r} :=  (a_1,r_0) (r_0,v_2)=(a_1,r_1)(r_1,v_2) .
\end{array}
\end{equation}
\end{example}

\begin{rmks} \label{rmk:Imfree}
A choice of orientation was made during the construction of $E$ from $C_1$ in Example~\ref{ex:C1}. A different choice of orientation would result in the same equations being generated in \eqref{eq:c1-lambek} but in a different order. The process discussed in Definition~\ref{dfn:addquads} identifies certain  squares in the directed graph $E$ formed by the quadrangle club, see \eqref{eq:C2uncoloured} above. From this observation one may see that one may move freely between the polyhedral graph and the Lambek quadrangle club.

Note that the identification of squares in a Lambek quadrangle club $E$ allows us to show that it is bipartite: All vertices from $P$ connect to $F$ and all vertices from $F$ connect to $A$.
\end{rmks}

\section{The higher-rank graph $\Lambda_{E,\mathcal{C}}$  built from the Lambek quadrangle club $E$ defined from a polyhedral graph $\mathfrak{P}=(P,A)$} \label{sec:three}

\noindent
In the last section we showed how to associate a Lambek quadrangle club, a directed graph $E$, to a polyhedral graph $\mathfrak{P}=(P,A)$. Then we showed how to colour the natural equations coming from the quadrangles so that $E$ becomes a coloured graph $(E,c)$. In this section we show that this coloured graph has all the requisite properties to become a higher-rank graph $\Lambda_{E,\mathcal{C}}$ where $\mathcal{C}$ are the commuting squares given by the coloured quadrangles.

\subsection{Colouring the Lambek quadrangle club} \label{sec:threepointone}

Here the idea is to colour the edges of Lambek quadrangle club $E$ so that the Lambek half-arc equations \eqref{eq:ha-left}, \eqref{eq:ha-right} colour the squares of $E$ to form a coloured graph with a complete set of squares in the sense of Definition~\ref{dfn:complete-squares}. This will enable us to produce a higher-rank graph using \cite{hrsw} as we had set out to do. 

To  create the necessary complete set of squares in $E$, we colour the edges occurring in the half arc Lambek conditions \eqref{eq:ha-left}  \eqref{eq:ha-right}. Thus creating coloured commuting squares in the sense of \cite{hrsw}. In particular, we must assign edges on opposite sides of the square in \eqref{eq:ha-left}, \eqref{eq:ha-right} the same colour. For certain types of polyhedral graph, this turns out to be particularly easy:

\begin{rmk} \label{rmk:dual}
The \emph{dual} of a planar graph $\mathfrak{P}=(P,A)$ is (naturally) a planar graph $\widehat{\mathfrak{P}}=(\hat{F},\hat{A})$ whose vertices $\{ v_f : f  \in F \}$ are identified with the faces of $\mathfrak{P}$. There is an arc $(f_1,f_2)$ between faces $f_1,f_2$ if there is an arc $a \in A$ such that $L(a)=f_1$, $R(a)=f_2$ (there is also an edge $(f_2,f_1)$ if $L(a)=f_2$, $R(a)=f_1$). The faces $\hat{F} = \{ f_p : p \in P \}$ of $\widehat{\mathfrak{P}}$ are identified with the vertices  of $\mathfrak{P}$. The vertices $\hat{P} = \{ v_f : f \in F \}$ of $\widehat{\mathfrak{P}}$ are identifed with the faces of $\mathfrak{P}$. Since $\mathfrak{P}$ is an Eulerian polygon (polyhedral graph) by Steinitz's Theorem \cite{St} the dual graph is uniquely determined up to isomorphism. 

The Lambek Quadrangle Club $\hat{E} $ of $\widehat{\mathfrak{P}}$ has vertices $\hat{P} \sqcup \hat{A} \sqcup\hat{F}$ and edges
\begin{align*}
\{ ( f_p , v_f ) : (p,f) \in E^1 \} & \text{ or } \\
\{ (L(a),R(a)),f_p) : (R(a),p) \in E^1 \text{ or } (L(a),p) \in E^1 \} & \text{ with the expected range and source maps.}
\end{align*}

\noindent
The graph $E$ and its dual $\hat{E}$ associated to a convex polyhedron $\mathfrak{P}=(P,A)$ will have different numbers of sources ($|P|$ for $E$ and $|F|$ for $\hat{E}$) and sinks ($|A|$ for $E$ and $|\hat{A}|$ for $\hat{E}$). Two examples of polyhedral graphs and their duals drawn on the same picture are to be found in \eqref{eq:conv-poly} and \eqref{eq:c2dual}.
\end{rmk}

\noindent
We thank the contributors to Math.Stackexchange for directing us to a proof which otherwise we could not cite (see \cite{MSt}).

\begin{prop} \label{prop:2-colour}
Let $\mathfrak{P} = (P,A)$ be a polyhedral graph with points $R = \{r_1 , \ldots , r_n \}$ in which every point has even valency. Then there is a function $c : R \to \{ c_1 , c_2 \}$ such that no two adjacent  regions have the same colour.
\end{prop}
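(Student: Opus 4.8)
The plan is to pass to the dual graph and reduce the statement to bipartiteness. By Remark~\ref{rmk:dual} the dual $\widehat{\mathfrak{P}} = (F,\hat{A})$ is a planar graph whose vertices are exactly the regions of $\mathfrak{P}$, with an edge joining two regions precisely when they share an arc of $\mathfrak{P}$. A function $c : R \to \{c_1, c_2\}$ assigning adjacent regions different colours is therefore nothing other than a proper $2$-colouring of the vertices of $\widehat{\mathfrak{P}}$, and such a colouring exists if and only if $\widehat{\mathfrak{P}}$ is bipartite. So the whole proposition reduces to the single claim that $\widehat{\mathfrak{P}}$ contains no cycle of odd length.

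To establish this I would use the spherical embedding of $\mathfrak{P}$ together with a parity count. Fix a cycle $\gamma$ in $\widehat{\mathfrak{P}}$. Since $\mathfrak{P}$, and hence $\widehat{\mathfrak{P}}$, is inscribed on $S^2$, the cycle $\gamma$ is a simple closed curve on the sphere and so, by the Jordan curve theorem, separates $S^2$ into two discs; let $S \subseteq P$ be the set of points of $\mathfrak{P}$ lying in one of them. By the construction of the dual, the edges of $\widehat{\mathfrak{P}}$ traversed by $\gamma$ are dual to precisely those arcs of $\mathfrak{P}$ having exactly one endpoint in $S$; thus the length of $\gamma$ equals the number $k$ of such ``crossing'' arcs.

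The key step is then the degree-sum identity
\[
\sum_{v \in S} d_{\mathfrak{P}}(v) \;=\; 2\,\#\{\text{arcs with both endpoints in } S\} \;+\; k,
\]
in which each arc internal to $S$ is counted twice and each crossing arc once. By hypothesis every $d_{\mathfrak{P}}(v)$ is even, so the left-hand side is even; as the first term on the right is visibly even, $k$ must be even, i.e.\ $\gamma$ has even length. As $\gamma$ was arbitrary, $\widehat{\mathfrak{P}}$ has no odd cycle, hence is bipartite, which furnishes the required colouring $c$.

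I expect the main obstacle to be the topological bookkeeping in the middle step: making rigorous the claim that a cycle of the dual corresponds exactly to the edge cut $\{\text{arcs crossing }\gamma\}$ of the primal, with cut-size equal to the cycle length. This is the standard planar-duality fact that the cocycles of $\mathfrak{P}$ are the cycles of $\widehat{\mathfrak{P}}$, and it rests on the Jordan curve theorem on $S^2$ (and on the well-definedness of the dual, already secured via Steinitz's theorem in Remark~\ref{rmk:dual}). Once that correspondence is in place, the parity computation above is immediate.
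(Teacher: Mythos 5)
Your proposal is correct, and it follows the same overall reduction as the paper --- pass to the dual $\widehat{\mathfrak{P}}$, show it has no odd cycle, invoke the characterisation of bipartite graphs (\cite[Proposition 1.6.1]{D} in the paper), and read the bipartition as the $2$-colouring --- but it differs in how the key ``no odd cycle'' claim is established. The paper's proof is essentially a citation: it asserts that even valency in $\mathfrak{P}$ transfers to $\widehat{\mathfrak{P}}$ and that this yields no odd cycles, leaning on the external reference \cite{MSt} for the fact that the dual of an Eulerian planar graph is bipartite. Taken literally, the paper's intermediate step is too quick: even valency of the dual's \emph{vertices} does not by itself exclude odd cycles (a triangle is $2$-regular), so the assertion only makes sense read through face parities or the cited argument. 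Your proof supplies exactly the missing content: the standard planar cut--cycle duality, in which a cycle $\gamma$ of $\widehat{\mathfrak{P}}$ is a simple closed curve on $S^2$ whose dual edges cross precisely the arcs of the edge cut $\delta(S)$ for $S$ the primal points inside one Jordan disc, together with the handshake identity $\sum_{v \in S} d_{\mathfrak{P}}(v) = 2\,\#\{\text{arcs internal to } S\} + |\delta(S)|$, which forces $|\delta(S)| = \mathrm{length}(\gamma)$ to be even. This buys a self-contained argument at the cost of some topological bookkeeping, which you correctly identify; note also that $\widehat{\mathfrak{P}}$ may be a multigraph (the dual of $C_1$ in Example~\ref{ex:colourex} has two edges between $r_0$ and $r_1$), which your argument accommodates without change since $2$-cycles are even and the cut correspondence uses only that each dual edge of $\gamma$ crosses its own primal arc exactly once.
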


\begin{proof}
Since every point in $\mathfrak{P}$ has even valency, the same is true of the dual graph $\widehat{\mathfrak{P}} = (F,\hat{A})$. Hence $\widehat{\mathfrak{P}}$ has no odd cycles and so by \cite[Proposition 1.6.1]{D} it is bipartite. Hence $F = F_1 \sqcup F_2$ and no two faces in $F_1$ (resp.\ $F_2$) are adjacent. Define $c : R \to \{ c_1 , c_2 \}$ by $c ( f ) = c_i$ if and only if $f \in F_i$, $i=1,2$. This completes the proof.
\end{proof}

\begin{example}[Lunar examples] \label{ex:bush}
In Bush \cite{B}, a family of polyhedral graphs $C_n$, $n \ge 1$ are described  where the direct Ma{l}'cev and half-arc Lambek conditions overlap. From \cite[p.\ 440]{B} we have the following: $V=\{ v_1,v_2\}$, $A=\{a_0 , \ldots , a_{n-1} \}$, $F=\{ r_0 , \ldots , r_n \}$. (Hence $|V|=2$, $|A|=n$ and $|F|=n+1$ so $V+F=A+2$). In the diagram given below we show the vertices $\{ v_1, v_2 \}$, half-arcs  $\{ (a_0,v_1), (a_0,v_2)\} , \ldots, \{ (a_{n-1}, v_1) , (a_{n-1} , v_2) \}$, and the regions $r_0 , \ldots , r_n$.

\begin{minipage}[l]{0.35\textwidth}
\[
\begin{tikzpicture}

\draw (0,0) circle (2.5cm);

\draw (0,0) ellipse (2.5cm and 2cm);
\draw (0,0) ellipse (2.5cm and 1.3cm);
\draw (0,0) ellipse (2.5cm and 0.7cm);
\draw (0,0) ellipse (2.5cm and 0.3cm);

\node at (-1.2,1.95) {\Tiny $\scriptscriptstyle (a_0,v_1)$};
\node at (-1.3,-1.9) {\Tiny $\scriptscriptstyle (a_{n-1},v_1)$};
\node at (1.3,1.9) {\Tiny $\scriptscriptstyle (a_0,v_2)$};
\node at (1.3,-1.9) {\Tiny $\scriptscriptstyle (a_{n-1},v_2$)};

\node at (-1.4,1.35) {\Tiny $\scriptscriptstyle (a_1,v_1)$};
\node at (-1.34,-1.35) {\Tiny $\scriptscriptstyle (a_{n-2},v_1)$};
\node at (1.3,1.45) {\Tiny $\scriptscriptstyle (a_1,v_2)$};
\node at (1.34,-1.35) {\Tiny $\scriptscriptstyle (a_{n-2},v_2)$};

\node[left] at (-2.5,0) {\Tiny $v_1$};
\node[right] at (2.5,0) {\Tiny $v_2$};

\node[below] at (0,-2.5) {\Tiny $r_0$};
\node[above] at (0,-2.5) {\Tiny $r_n$};

\node[below] at (0,2.5) {\Tiny $r_1$};
\node[above] at (0,2.5) {\Tiny $r_0$};

\node[below] at (0,-1.3) {\Tiny $r_{n-1}$};
\node[above] at (0,-1.3) {\Tiny $r_{n-2}$};

\node[below] at (0,1.3) {\Tiny $r_3$};
\node[above] at (0,1.3) {\Tiny $r_2$};

\node at (0,0.6) {$\vdots$};
\node at (0,-0.4) {$\vdots$};

\end{tikzpicture}
\]
\end{minipage}%
\begin{minipage}[l]{0.6\textwidth}
\noindent
The  half-arc Lambek lunar conditions for $C_n$, $n \ge 1$ are
\begin{align*}
h_{il} &:= (a_i ,r_i )(r_i ,v_1) =  (a_i,r_{i+1\!\!\!\!\! {\pmod 2}})(r_{i+1\!\!\!\!\! {\pmod 2}} ,v_1)   \ \text{ and } \\
h_{ir} &:=  (a_i , r_i) (r_i ,v_{2} ) = (a_i,r_{i+1\!\!\!\!\! {\pmod 2}})(r_{i+1\!\!\!\!\! {\pmod 2}}, v_2) ,  \\
& 
\text{where } 0\le i\le n-1. 
\end{align*}

\medskip
\hphantom{abd} 
\noindent $C_1$ is the polyhedral graph in Example~\ref{ex:colourex} and $C_2$ is the polyhedral graph in \eqref{eq:conv-poly}. It is easy to see that if $n$ is odd then $C_n$ is $2$-colourable by Proposition~\ref{prop:2-colour} as $v_1 ,v_2$ have even degree. Furthermore, if $n$ is even then it is easy to see that $C_n$ is $3$-colourable using the fact that $C_{n-1}$ is $2$-colourable.
\end{minipage}
\end{example}

\subsubsection*{General colouring algorithm:} \label{sec:coloring-alg}

First check if every vertex in the polyhedral graph has even degree, then by Proposition~\ref{prop:2-colour} we know that the plane can be coloured by two colours, and proceed by the method suggested there. It is known that there is no polynomial-time algorithm for determining if a planar graph is $3$-colourable.
Observe that if a planar polyhedral graph is $2$-colourable, then its dual is not necessarily $2$-colourable, see \eqref{eq:conv-poly}.

Since $\mathfrak{P}= (P,A)$ is planar we know by \cite{AH1,AHK,AH2} that we need at most $4$ colours. Let $R = \{ r_1 , \ldots , r_n \}$ be the regions within $\mathfrak{P}$ and fix a colouring function $c : R \to \{ c_1 , c_2 , c_3 , c_4 \}$ such that for each region/face $f_k$ of $\mathfrak{P}$ such that no two adjacent regions have the same colour.

\begin{rmk}
The choice of colouring function $c$ above gives rise to  some ambiguity about the nature of the resulting coloured graph, and hence the rank of the resulting graph. If two different colouring functions give rise to graphs of different dimension, they will be quasi-isomorphic.

However, if we know for sure that the graph is $2$- or $3$-colourable (see Example~\ref{ex:bush}) then this problem does not arise.
\end{rmk}

\begin{example}
The convex polyhedron shown below may be coloured in two different ways:
\[
\begin{tikzpicture}[scale=2]
    
\node[inner sep=1pt, circle] (00) at (0,0) {$\bullet$};
\node[inner sep=1pt, circle] (10) at (1,0) {$\bullet$}; 
\node[inner sep=1pt, circle] (20) at (2,0) {$\bullet$}; 
\node[inner sep=1pt, circle] (30) at (3,0) {$\bullet$};
    
\node at (0.5,0.13) {{\color{blue} \tiny Blue}};
\node at (-0.5,0) {{\color{red} \tiny Red}};   
\node at (1.5,0.13) {{\color{green!50!black} \tiny Green}};
\node at (2.5,0.13) {{\color{blue} \tiny Blue}};
    
\draw (00)--(10) node[pos=0.5, anchor=north,inner sep=1pt] {\tiny\color{black}${\color{blue}a_0}$};

\draw (10)--(20) node[pos=0.5, anchor=north,inner sep=1pt] {\tiny\color{black}$a_2$};

\draw (20)--(30) node[pos=0.5, anchor=north,inner sep=1pt] {\tiny\color{black}$a_4$};
 
\draw (10) .. controls +(-0.5,+0.3) .. (00) node[pos=0.5,anchor=south,inner sep=1pt, black] {\tiny$a_1$};

\draw (20) .. controls +(-0.5,+0.3) .. (10) node[pos=0.5,anchor=south,inner sep=1pt, black] {\tiny$a_3$};

\draw (30) .. controls +(-0.5,+0.3) .. (20) node[pos=0.5,anchor=south,inner sep=1pt, black] {\tiny$a_5$};

\node[left] at (-0.02,-0.02) {\tiny\color{black} $v_1$};
\node[above] at (1,0.02) {\tiny\color{black} $v_2$};
\node[above] at (2,0.02) {\tiny\color{black} $v_3$};

\node[right] at (3.03,0) {\tiny\color{black} $v_4$};

\begin{scope}[xshift=4.5cm]

\node[inner sep=1pt, circle] (00) at (0,0) {$\bullet$};
\node[inner sep=1pt, circle] (10) at (1,0) {$\bullet$}; 
\node[inner sep=1pt, circle] (20) at (2,0) {$\bullet$}; 
\node[inner sep=1pt, circle] (30) at (3,0) {$\bullet$};
    
\node at (0.5,0.13) {{\color{blue} \tiny Blue}};
\node at (-0.5,0) {{\color{red} \tiny Red}};   
\node at (1.5,0.13) {{\color{green!50!black} \tiny Green}};
\node at (2.5,0.13) {{\color{purple} \tiny Purple}};
    
\draw (00)--(10) node[pos=0.5, anchor=north,inner sep=1pt] {\tiny\color{black}${\color{blue}a_0}$};

\draw (10)--(20) node[pos=0.5, anchor=north,inner sep=1pt] {\tiny\color{black}$a_2$};

\draw (20)--(30) node[pos=0.5, anchor=north,inner sep=1pt] {\tiny\color{black}$a_4$};
 
\draw (10) .. controls +(-0.5,+0.3) .. (00) node[pos=0.5,anchor=south,inner sep=1pt, black] {\tiny$a_1$};

\draw (20) .. controls +(-0.5,+0.3) .. (10) node[pos=0.5,anchor=south,inner sep=1pt, black] {\tiny$a_3$};

\draw (30) .. controls +(-0.5,+0.3) .. (20) node[pos=0.5,anchor=south,inner sep=1pt, black] {\tiny$a_5$};

\node[left] at (-0.02,-0.02) {\tiny\color{black} $v_1$};
\node[above] at (1,0.02) {\tiny\color{black} $v_2$};
\node[above] at (2,0.02) {\tiny\color{black} $v_3$};

\node[right] at (3.03,0) {\tiny\color{black} $v_4$};

\end{scope}
     
\end{tikzpicture}
\]

\noindent
By Theorem~\ref{thm:polygiveshrg} the left polyhedron gives rise to a rank-$3$ graph $\Lambda_1$ and the right polyhedron gives rise to a graph $\Lambda_2$ of rank $4$, which is quasi-isomorphic to $\Lambda_1$.
\end{example}

\begin{dfn}[Colouring quadrangles] \label{dfn:colouring}
Let $\mathfrak{P}=(P,A)$ be a polyhedral graph, and $E$ the associated Lambek quadrangle club. Let $Q = \{ q_{ar} , q_{al} : a \in A \}$ be the collection of quadrangles.
\begin{itemize} 
\item[$q_{a l}$:] \label{qal} In the quadrangle $q_{a l}$ we associate the colour $c(R(a))$ associated to the region $R(a)$ to the edges $(L(a),r(a))$, $(a,R(a))$ (see \eqref{eq:ha-left} and \eqref{eq:ha-right}). By definition opposite edges $(R(a),r(a))$, $(a,(L(a))$ are assigned the (different) colour $c(L(a))$ of the region $L(a)$.
\item[$q_{a r}$:] \label{qar}  In the quadrangle $q_{a r}$ we associate the colour $c(R(a))$ associated to the region $R(a)$ to the edges $(L(a),s(a))$, $(a,R(a))$ (see \eqref{eq:ha-left} and \eqref{eq:ha-right}). By definition opposite edges $(R(a),s(a))$, $(a,(L(a))$ are assigned the (different) colour $c(L(a))$ of the region $L(a)$. 
\end{itemize}
\end{dfn}

\begin{example} \label{ex:colourex}
Let $\mathfrak{P}=C_1$ be the polyhedral graph shown below drawn with solid lines. We have also drawn the dual graph (dotted) $\widehat{\mathfrak{P}}= ( \{r_0,r_1\} , \{ (r_0,r_1),(r_1,r_0)\})$ (see Remark~\ref{rmk:dual}). Then the quadrangle club $E$  is formed as shown below.
\begin{equation} \label{eq:c2dual}
\begin{array}{l}
\begin{tikzpicture}[scale=3.5]

\node[inner sep=1pt, circle] (00) at (0,0) {$\bullet$};
\node[inner sep=1pt, circle] (10) at (1,0) {$\bullet$};

\node at (0.5,0) {\tiny $r_1$};
\node at (-0.4,0) {\tiny $r_0$};

\draw[dotted] (0.5,0) parabola bend (0.05,0.2) (-0.4,0);
\draw[dotted] (0.5,0) parabola bend (0.05,-0.2) (-0.4,0);
\node at (0.05,0.25) {\tiny $(r_0,r_1)$};
\node at (-0.05,-0.25) {\tiny $(r_1,r_0)$};

\draw (10) .. controls +(-0.5,+0.35) .. (00) node[pos=0.5,anchor=north,inner sep=1pt, black] {\tiny$a_0$};

\draw (00) .. controls +(0.5,-0.35) .. (10) node[pos=0.5,anchor=south,inner sep=1pt, black] {\tiny$a_1$};

\node[left] at (-0.02,-0.02) {\tiny\color{black} $v_1$};

\node[right] at (1.02,-0.02) {\tiny\color{black} $v_2$};

\node at (2.4,0.15) {Lambek quadrangle club};
\node at (2.5,0.0) {$E^0 = \{ v_1 , v_2 ,a_0 , a_1 , r_0 , r_1 \}$ and};
\node at (2.78,-0.15){$E^1 = \{ ( a_0 , r_0 ) , ( a_0 , r_1 ) , ( a_1 ,r_0 ) , ( a_1 , r_1) \}\cup$};
\node at (3.2,-0.3) {$\{ (  r_0 , v_1 ) , (  r_1 ,v_1 ) , (r_0 , v_2) , ( r_1 , v_2 ) \} .$};

\end{tikzpicture}
\end{array}
\end{equation}

\noindent
Since every vertex has even degree we may associate the colour blue to the region $r_0$ and the colour red to the region $r_1$ and an orientation such that $R(a_0)=r_0$ and $R(a_1)=r_0$. There are two half arc pairs $\{ (a_0 , v_1 ) , (a_0 , v_2 ) \}$ and $\{ ( a_1 , v_2 ) , (a_1 , v_1 ) \}$ giving rise to four quadrangles
\begin{align*}
q_{0l}  = [(r_1,v_1), (r_0,v_1),(a_0,r_1),(a_0,r_0)], \ & \quad q_{1l}  = [(r_1,v_2), (r_0,v_2),(a_0,r_1),(a_0,r_0)],  \\ 
q_{0r}  = [(r_1,v_1), (r_0,v_1),(a_1,r_1),(a_1,r_0)], \ & \quad  q_{1r} =  [(r_1,v_2), (r_0,v_2),(a_1,r_1),(a_1,r_0)] . 
\end{align*} 

\noindent
In quadrangle $q_{0l}$  we colour the edges $(a_0,r_0)$, $(r_1,v_1 )$ blue and the edges $(r_0,v_1)$, $(a_0,r_1)$ red and so on. Here below we draw the $2$-coloured Lambek quadrangle club graph,  we draw it on on top of polyhedral graph $\mathfrak{P}=(P,A)$ so that one can see how it is constructed. 
\begin{equation} \label{eq:c1-coloured}
\begin{array}{l}
\begin{tikzpicture}[->=stealth,scale=0.7]

\node[inner sep=1pt,circle] (p0) at (-3,0) {\Tiny $v_1$};
\node[inner sep=1pt,circle] (p1) at (3,0) {\Tiny $v_2$};

\node[inner sep=1pt,circle] (f0) at (0,0) {\Tiny $r_1$};

\node[inner sep=1pt,circle] (lf1) at (-6,0) {\Tiny $r_0$};
\node[inner sep=1pt,circle] (rf1) at (6,0) {\Tiny $r_0$};

\node[inner sep=1pt,circle] (a1) at (0,-2) {\Tiny $a_1$};
\node[inner sep=1pt,circle] (a0) at (0,2) {\Tiny $a_0$};


\draw[dotted] (2.8,0) parabola bend (0,1.8) (-2.8,0);

\draw[dotted] (-2.8,0) parabola bend (0,-1.8) (2.8,0);


\draw[thick,blue,-latex] (p0) -- (f0) node[pos=0.5,above] {\tiny\color{black}$(r_1,v_1)$};

\draw[dashed,red,thick,-latex] (p0) -- (lf1) node[pos=0.3,above] {\tiny\color{black}$(r_0,v_1)$};

\draw[thick,blue,-latex] (p1) -- (f0) node[pos=0.5,above] {\tiny\color{black}$(r_1,v_2)$};

\draw[red,dashed,thick,-latex] (p1) -- (rf1) node[pos=0.3,above] {\tiny\color{black}$(r_0,v_2)$};


\draw[thick,blue,-latex] (lf1) -- (a0) node[pos=0.5,above] {\tiny\color{black}$(a_0,r_0)$};

\draw[red,dashed,thick,-latex] (f0) -- (a0) node[pos=0.6,right] {\tiny\color{black}$(a_0,r_1)$};

\draw[blue,thick,-latex] (rf1) -- (a1) node[pos=0.5,above] {\tiny\color{black}$(a_1,r_0)$};

\draw[red,dashed,thick,-latex] (f0) -- (a1) node[pos=0.6,right] {\tiny\color{black}$(a_1,r_1)$};


\draw[thick,blue,-latex] (rf1) -- (a0) node[pos=0.5,above] {\tiny\color{black}$(a_0,r_0)$};

\draw[thick,blue,-latex] (lf1) -- (a1) node[pos=0.5,above] {\tiny\color{black}$(a_1,r_0)$};

\begin{scope}[xshift=4.7cm]
\node at (5.2,2) {Half-arc Lambek conditions:};
\node at (7,0) {$\begin{array}{l}
h_{0l} := {\color{blue}(a_0,r_0)}{\color{red}(r_0,v_1)} ={\color{red}(a_0,r_1)} {\color{blue}(r_1,v_1)} ,\\ 
h_{1l} := {\color{blue}(a_0,r_0)}{\color{red}(r_0,v_2)} = {\color{red}(a_0,r_1)}{\color{blue}(r_1,v_2)} , \\
h_{0r} := {\color{blue}(a_1,r_0)}{\color{red}(r_0,v_1)} ={\color{red}(a_1,r_1)}{\color{blue}(r_1,v_1)} , \\
 h_{1r} :=  {\color{blue}(a_1,r_0)}{\color{red}(r_0,v_2)} = {\color{red}(a_1,r_1)}{\color{blue}(r_1,v_2)} .
\end{array}$};
\end{scope}
\end{tikzpicture}
\end{array}
\end{equation}
\end{example}

\subsection{The coloured Lambek quadrangle club $(E,c)$}

Following section~\ref{sec:coloring-alg}, we know that $(E,c)$ is a $k$-coloured graph, $2 \le k \le 4$. Let $Q = \{ q_{ar} , q_{al} : a \in A \}$ be the collection of quadrangles in $E$, with associated bi-coloured commuting squares given by the half-arc 
Lambek conditions.
Each quadrangle gives rise to a bi-coloured square in the coloured graph $(E,c)$: For instance, fix $a \in A$ and consider the quadrangle
\[
q_{al} = \big( (a,R(a)) , (R(a),r(a)) ,  (a,L(a)) , (L(a),r(a)) \big)
\]

\noindent
where $(a,R(a)),(L(a),r(a))$ have colour $c_i$ and $(a,L(a)), (R(a),r(a))$ have colour $c_j$. Then following section~\ref{sec:CG} and consulting \eqref{eq:half-arcs} we construct a square $\phi_{al}$ in $(E,c)$ by defining $\phi_{al} : ( E_{k,(\varepsilon_i+\varepsilon_j)} , c_E) \to (E,c)$ by
\begin{align*}
\phi_{al} (0) &= r(a) , \ \phi_{al} ( \varepsilon_i ) = R(a) , \ \phi_{al} ( \varepsilon_j) = L(a) \text{ and } \phi_{al} ( \varepsilon_i  + \varepsilon_j ) = s(a) , \\
\phi_{al} ( f^0_i ) &= (a,R(a)), \ \phi_{al} ( f^{\varepsilon_i}_j ) = (r(a),L(a)), \ \phi_{al} ( f_0^j ) = (a,L(a)) , \ \phi_{al} ( f^{\varepsilon_j}_i ) = (r(a),R(a)) ,
\end{align*}

\noindent
and similarly for a square $\phi_{ar}$ for the quadrangle $q_{ar}$. The the set $\Cc$ of $2 |A|$ commuting squares in $(E,c)$ is then
\begin{equation} \label{eq:cs}
\Cc = \{ \phi_{al} , \phi_{ar} : a \in A \} .
\end{equation}

\noindent
Recall from Definitions~\ref{dfn:complete-squares}, a collection of bi-coloured squares $\Cc$ in $(E,c)$ is complete if for each $i \neq j \leq k$ and $c_ic_j$-coloured path $fg \in E^2$ there is a unique $c_jc_i$-coloured path $ g'f'$ with the same range and source such that $(fg, g'f') \in \Cc$.

\begin{thm} \label{thm:polygiveshrg}
Let $\mathfrak{P}=(P,A)$ be a convex polyhedron and $E$ its associated Lambek quadrangle club which has been $k$-coloured, $2 \le k \le 4$, according to Definition~\ref{dfn:colouring}. Then the $2|A|$ commuting squares $\mathcal{C}$ \eqref{eq:cs} given by the Lambek half-arc equations \eqref{eq:ha-left} and \eqref{eq:ha-right} are  complete.  Hence $(E,c)$ with commuting squares $\mathcal{C}$ gives rise to a connected, singly connected, locally convex rank-$k$ graph 
$\Lambda_{(E,\Cc)}$.
\end{thm}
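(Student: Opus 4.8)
The plan is to verify the completeness condition of Definition~\ref{dfn:complete-squares} head-on, exploiting the very rigid path structure of a Lambek quadrangle club, and then to feed the resulting complete set of squares into the coloured-graph machinery of \cite{hrsw}.

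First I would record the shape of the length-two paths in $E$. Every edge is either of type $(a,f)\in A\times F$ (range an arc, source a face) or of type $(f,p)\in F\times P$ (range a face, source a point), and no edge has a point as its range; hence the only admissible composition is arc $\to$ face $\to$ point, so every element of $E^2$ has the form $(a,f)(f,p)$ with $a,p\in\overline f$, and (as already observed) $E^3=\emptyset$. Next I would read off the colours from Definition~\ref{dfn:colouring}: the arc-to-face edge $(a,f)$ carries the colour $c(f)$ of the face $f$, whereas a face-to-point edge $(f,p)$ carries the colour of the face flanking $f$ at $p$ across the separating arc. A preliminary step is therefore to check that this second assignment is independent of which of the two arcs bounding $f$ at $p$ is used; this is exactly where the chosen proper colouring $c$ enters (two-colourable by Proposition~\ref{prop:2-colour}, or the supplied $3$- or $4$-colouring), guaranteeing that the two faces flanking $f$ at $p$ receive a common colour, so that $(E,c)$ is genuinely a $k$-coloured graph.

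With this in hand, the heart of the argument is a bijection between $c_ic_j$-coloured paths and squares. Given a $c_ic_j$-coloured path $fg=(a,f')(f',p)\in E^2$, the first edge forces the arc $a$ and the face $f'$ (the face of colour $c_i$ adjacent to $a$, which I would name $R(a)$ after fixing the orientation), while the endpoint $p$ selects the half-arc: $p=r(a)$ gives $q_{al}$ and $p=s(a)$ gives $q_{ar}$. The associated square $\phi_{al}$ (resp.\ $\phi_{ar}$) from \eqref{eq:cs} then has $fg$ as its $c_ic_j$-side, and the half-arc Lambek equation \eqref{eq:ha-left} (resp.\ \eqref{eq:ha-right}) supplies the unique $c_jc_i$-side $(a,L(a))(L(a),p)$ with the same range $a$ and source $p$. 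Uniqueness of the matching square is immediate since $a$, $f'$ and $p$ are determined by $fg$; existence is the bookkeeping of Remark~\ref{rmks:half-arc-count}, namely that each edge $(a,R(a))$ occurs on the left of precisely the two conditions $h_{al},h_{ar}$ and each face-to-point edge occurs exactly twice, so the left-hand sides of the $2|A|$ conditions exhaust the $c_ic_j$-coloured paths without repetition. I expect the main obstacle to lie precisely here: one must exclude ``stray'' bicoloured paths $(a,f')(f',p)$ in which $p\in\overline{f'}$ is \emph{not} an endpoint of $a$, since such a path would be bicoloured yet match no quadrangle. The cleanest route is to show that the colour datum forces $p$ to be an endpoint of $a$ — equivalently, that the arc realising the colour change of $f'$ at $p$ is $a$ itself — which is where the local structure of the polyhedron around each point, together with well-definedness of the colouring, is genuinely used.

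Finally, with $\Cc$ complete and $E^3=\emptyset$, the associativity condition of \cite{hrsw} is vacuous (by the remark following Definition~\ref{dfn:complete-squares}), so \cite[Theorems 4.4, 4.5]{hrsw} produce a $k$-graph $\Lambda_{(E,\Cc)}$ whose $1$-skeleton is $(E,c)$. Connectedness of $\Lambda_{(E,\Cc)}$ is inherited from that of $\mathfrak{P}$, hence of $E$. For single-connectedness I would use that all morphisms have degree at most $\varepsilon_i+\varepsilon_j$: the only ordered vertex pair joined by paths of length two is (arc, point), and between a point $p$ and an arc $a$ the only such paths are the two sides $(a,L(a))(L(a),p)$ and $(a,R(a))(R(a),p)$ of a single quadrangle, which the commuting relation identifies to one morphism, so $|a\Lambda p|\le1$; every other ordered pair is joined by at most one edge. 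Hence $|u\Lambda v|\le1$ for all $u,v\in\Lambda^0$, and $\Lambda_{(E,\Cc)}$ is the desired connected, singly connected rank-$k$ graph.
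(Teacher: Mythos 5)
Your skeleton is the same as the paper's proof---classify the bicoloured length-two paths, match each to a half-arc Lambek equation, then invoke \cite[Theorem 4.5]{hrsw} together with $E^3=\emptyset$, connectedness of $\mathfrak{P}$, and the path count for single-connectedness---but the two bridging lemmas you isolate (correctly, as the crux) are false in general, so the plan cannot be completed as written. First, a proper face-colouring does \emph{not} guarantee that the two faces flanking $f$ at $p$ receive a common colour: that would force the face colours around each point to alternate between two values, which holds in the even-valency $2$-colourable case of Proposition~\ref{prop:2-colour} but fails as soon as three distinctly coloured faces meet at a point. The paper's own rank-$4$ example $Pz_3$ of Appendix~\ref{app:4graph} exhibits this at $v_4$, where $R_1,R_2,R_3$ (red, purple, green) meet: reading Definition~\ref{dfn:colouring} literally, the edge $(R_1,v_4)$ inherits $c(R_3)$ (green) from the quadrangles of $I_1$ but $c(R_2)$ (purple) from those of $I_3$, so the colouring you take as given is not well defined there. (The printed conditions \eqref{eq:condB} silently use the transposed convention---$(f,p)$ coloured by $c(f)$ and $(a,f)$ by the colour of the face across $a$---which is always well defined and agrees with Definition~\ref{dfn:colouring} up to swapping the two colours when $k=2$.)

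Second, and more seriously, your hoped-for lemma that "the colour datum forces $p$ to be an endpoint of $a$" is unprovable because stray bicoloured paths genuinely occur: under either colouring convention the two edges of a path $(a,f)(f,p)$ carry the colours of two \emph{adjacent} faces, hence are always differently coloured, whether or not $p$ is an endpoint of $a$. Concretely, in $Pz_3$ the path $(a_1,R_1)(R_1,v_4)$ lies in $E^2$ (both edges are listed in $E^1$), is blue--red, and $v_4$ is not an endpoint of $a_1$; the only candidate companion through the other face adjacent to $a_1$ would need the edge $(R_4,v_4)$, which does not exist since $v_4\notin\overline{R_4}$. So no square in $\Cc$ of \eqref{eq:cs} serves this path and completeness fails. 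Such strays arise whenever some face has three or more boundary arcs (then some boundary point is not an endpoint of some boundary arc), so they are the generic case; only digon-faced polyhedra like the lunar graphs $C_n$ of Example~\ref{ex:bush} avoid them. You should know that the paper's own proof has exactly the same hole---it asserts without justification that paths of length two in $E$ "have only one possible form", namely half-arc form---so you have located precisely the weak point of the theorem; but your proposed repair cannot close it, and a correct statement must either restrict the admissible polyhedra or change the construction, rather than derive the exclusion of strays from the colouring.
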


\begin{proof}
First we note that since there are no paths of length three or more in $E$, so the associativity condition of \cite[\S 3]{hrsw} does not apply. 
To check completeness there are only two cases to consider as paths of length $2$ in $E$ have only one possible form. For the case $fg= (a,R(a)) (R(a),r(a)) \in E^2$ for some $a \in A$ with colours $c_i c_j$, $i \neq j$. The range and source tell us that this path forms part of the Lambek half-arc equation $q_{al} = (a,R(a)) (R(a),r(a)) = (a,L(a)) (L(a),r(a))$. By Definition~\ref{dfn:colouring} the path $g'f'= (a,L(a)) (L(a),r(a))$ is $c_jc_i$-coloured and  is the unique path such that $(fg,g'f') \in \Cc$. For the case $fg = (a,R(a))(R(a),s(a))$ for some $a \in A$ with colours $c_i c_j$, $i \neq j$, a similar argument applies. Finally, applying \cite[Theorem 4.5]{hrsw} completes the proof. By the standing assumption $\mathfrak{P}$ is connected and so we can see from \eqref{eq:half-arcs} that the three different types $P,A,F$ comprising the vertices of $E$ are mutually connected. Hence the $1$-skeleton $\Sk_{\Lambda_{(E,\Cc)}}$, is connected. That $\Lambda_{(E,\Cc)}$ is singly connected follows from the statement about paths of length two in $E = \Sk_{\Lambda_{(E,\Cc)}}$ given above.
Since the rank-$2$ graph is made up of bi-coloured quadrangles in which the local convexity condition holds, it follows that the graph itself is locally convex.
\end{proof}

\begin{rmk} \label{rmk:opposite}
One may carry out the above construction but instead turn the arrows around in a quadrangle (see \eqref{eq:half-arcs}). Everything works through, except that the half-arc equations \eqref{eq:ha-left} and \eqref{eq:ha-left} must be reversed to account for the new quadrangles. The resulting rank-$k$ graph $\Lambda_{(E^{opp},\Cc^{opp})}$ will be the opposite of the one constructed in Theorem~\ref{thm:polygiveshrg}.

Theorem~\ref{thm:polygiveshrg} also tells us that the quadrangle club $\hat{E}$ of the dual graph of a convex polyhedron $\mathfrak{P}=(P,F)$ will have a different number, $\hat{\mathcal{C}} = 2|\hat{A}|$ of commuting squares compared to that of $E$. Combined with Remark~\ref{rmk:dual} this indicates that the relationship (if any) between the higher-rank graphs associated to $E$ and $\hat{E}$ is complicated.
\end{rmk}

\section{Computing the Fundamental Group of $\Lambda_{(E,\Cc)}$} \label{sec:five}

\noindent
We claim that each $\Lambda_{E,\mathcal{C}}$ is a tree. In order to show that each $\Lambda_{E,\mathcal{C}}$ is a tree, we must compute its findamentsl group. To do this we plan to apply the results of \cite{KPW}. In order to use these results, we must first chose a maximal spanning tree for the planar $1$-skeleton $\Sk_{\Lambda_{(E,\Cc)}}=E$  of $\Lambda_{(E,\Cc)}$. We do this as follows: Let $\mathfrak{P}=(P,A)$ be a polyhedral graph
where $P=\{ v_1 , \ldots , v_{m} \}$,  $R= \{ r_1 , \ldots , r_{n} \}$, and $A= \{ a_1 \ldots  , a_{m+n-2} \}$,  for $m,n \ge 2$. Let $E$ its Lambek quadrangle club graph which has $N=2 |A|+2$ vertices. Fix a colouring $c : E^1 \to \{ c_1 , \ldots , c_k \}$, $2 \le k \le 4$.  Let $Q = \{ q_{ar} , q_{al} : a \in A \}$ be the collection of quadrangles, with $2 |A|$ associated commuting squares $\Cc$. Let $\Lambda_{(E,\Cc)}$ be the associated higher-rank graph. Recall, in a graph with $n$ vertices, a spanning tree will have $n-1$ edges. We use a variant of the usual maximal spanning tree algorithm which is greedy for edges on the left side of each Lambek half-arc equation.

\subsection{Left-greedy maximal spanning tree algorithm} \label{sec:greedy}
\begin{enumerate}[label={\bf \arabic*.}]
\item  Assign weights to the edges of $E$ in such a way that edges of the form $(a,f)$ where $f=R(a)$ have weight $2$, edges of the form $(a,f)$ some $a \in A$ where $f=R(a)$ have weight $1$. Edges of the form $(r,v)$ where $v=r(a)$ some $a \in A$ where $r=R(a)$ have weight $2$ otherwise $(r,v)$ has weight $1$.  Order the edges in $E^1$ with those of weight $2$ first and those of weight $1$ next. Let $T$ be the set of edges comprising the maximum weight spanning tree. Set $T = \emptyset$.
\item Add the first edge, in $E^1$ with weight $2$ to $T$.
\item Add the next edge with weight $2$ to $T$ if and only if it does not form a cycle in $T$. If there are no remaining edges of weight $2$ add the first edge of weight $1$ which does not form a (n undirected) cycle in $T$.
\item If $T$ has $N-1$ edges stop and output $T$. Otherwise go to step {\bf 3}.
\end{enumerate}

\begin{prop} With notation as above then $T$ is a maximal spanning tree for $\Sk_{\Lambda_{(E,\Cc)}}=E$ with maximum weight.
\end{prop}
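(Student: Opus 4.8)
The plan is to recognise the left-greedy procedure as a weighted, Kruskal-type greedy algorithm on the cycle matroid of the underlying undirected graph $\bar{E}$ of $E=\Sk_{\Lambda_{(E,\Cc)}}$, and to deduce both assertions---that $T$ is a spanning tree and that its weight is maximal---from the standard correctness of the matroid greedy algorithm. Recall that the forests of $\bar{E}$ are exactly the independent sets of the cycle matroid $M(\bar{E})$, and that the bases of $M(\bar{E})$ are the spanning trees of $\bar{E}$. Since $E$ is connected by Theorem~\ref{thm:polygiveshrg}, so is $\bar{E}$, and therefore every basis has precisely $N-1$ edges.

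First I would verify that the output $T$ is a spanning tree. The invariant maintained by the loop is that $T$ is a forest: $T$ starts empty, and steps \textbf{2}--\textbf{3} add an edge only when it creates no undirected cycle, so independence in $M(\bar{E})$ is preserved throughout. To see that the algorithm halts with $|T|=N-1$, note that whenever $|T|<N-1$ the forest $T$ has at least two components; since $\bar{E}$ is connected some edge of $\bar{E}$ joins two distinct components, and that edge closes no cycle, so an admissible edge always remains. Hence the process cannot terminate before $|T|=N-1$, at which point a forest on the $N$ vertices of $E$ is automatically a spanning tree and step \textbf{4} halts.

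For maximality of the weight I would run the standard exchange argument. The algorithm examines the edges in an order of nonincreasing weight (all weight-$2$ edges, then all weight-$1$ edges), adding each one that preserves independence; this is exactly Kruskal's rule for constructing a maximum-weight basis of $M(\bar{E})$. Write $T=\{e_1,\dots,e_{N-1}\}$ in selection order, so that $w(e_1)\ge\cdots\ge w(e_{N-1})$, and let $T'=\{f_1,\dots,f_{N-1}\}$ be any spanning tree with its edges listed in nonincreasing weight. A routine induction gives $w(e_i)\ge w(f_i)$ for every $i$: the set $\{e_1,\dots,e_{i-1}\}$ is independent, so by the exchange axiom some $f_j$ with $j\le i$ extends it without forming a cycle, and since $f_j$ was still available when $e_i$ was chosen greedily we get $w(e_i)\ge w(f_j)\ge w(f_i)$. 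Summing over $i$ yields $w(T)\ge w(T')$, and as $T'$ was arbitrary, $T$ has maximum weight.

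The main obstacle is the weight-comparison inequality $w(e_i)\ge w(f_i)$, which is the only place where the matroid (exchange) structure is used rather than mere connectedness; the forest invariant, the termination at $N-1$ edges, and the reduction to Theorem~\ref{thm:polygiveshrg} for connectedness are then routine. Since the weights take only the values $1$ and $2$, this inequality in fact reduces to the single observation that $T$ must contain a maximum-size forest among the weight-$2$ edges, so the detailed ordering within each weight class (the ``left-greedy'' bias recorded in step \textbf{1}) does not affect the total weight and is therefore harmless.
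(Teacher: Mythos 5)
Your proof is correct and is essentially the paper's argument made explicit: the paper disposes of this proposition with a single citation to \cite[Theorem 10]{Bo}, i.e.\ the standard correctness of the Kruskal-type greedy algorithm for maximum-weight spanning trees, which is exactly the matroid exchange argument you carry out in full. Your closing observation that, with weights in $\{1,2\}$, maximality reduces to $T$ containing a maximum-size forest of weight-$2$ edges (so the left-greedy tie-breaking within a weight class is harmless) is a nice refinement consistent with how the paper uses the bias only for demonstrative purposes.
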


\begin{proof}
See, for instance, \cite[Theorem 10]{Bo}.
\end{proof}

\begin{example} \label{ex:treex}
The picture below shows the above algorithm applied to the polyhedral graph $C_1$ with Lambek half-arc conditions \eqref{eq:c1-lambek}. Recall $R(a_0)=r_0=R(a_1)$. Give weight $2$ to edges $(a_0,r_0),(a_1,r_0),(r_0,v_0),(r_0,v_1)$, then edges $(a_0,r_1),(a_1,r_1),(r_1,v_0),(r_1,v_1)$ have weight $1$. These edges are then put together in the order suggested. Note $N = |E^0| = 2|A| +2 = 6$.

First pass $(a_0,r_0)$ accepted. Second pass $(a_1,r_0)$ accepted. Third pass $(r_0,v_0)$ accepted. Fourth  pass $(r_0,v_1)$ accepted. Fourth pass edges of weight $2$ exhausted, $(a_0, r_1)$ accepted, edge count $5$ reached algorithm terminates giving tree $T$ shown below to the right (recall vertex $r_0$ was repeated for convenience in \eqref{eq:c1-coloured} as it is here).
\[
\begin{tikzpicture}[->=stealth,scale=0.6]

\node[inner sep=1pt,circle] (p0) at (-3,0) {\Tiny $v_0$};
\node[inner sep=1pt,circle] (p1) at (3,0) {\Tiny $v_1$};

\node[inner sep=1pt,circle] (f0) at (0,0) {\Tiny $r_1$};

\node[inner sep=1pt,circle] (lf1) at (-6,0) {\Tiny $r_0$};
\node[inner sep=1pt,circle] (rf1) at (6,0) {\Tiny $r_0$};

\node[inner sep=1pt,circle] (a1) at (0,-2) {\Tiny $a_1$};
\node[inner sep=1pt,circle] (a0) at (0,2) {\Tiny $a_0$};

\draw[dashed,red,thick,-latex] (p0) -- (lf1) node[pos=0.3,above] {\tiny\color{black}$(r_0,v_0)$};

\draw[red,dashed,thick,-latex] (p1) -- (rf1) node[pos=0.3,above] {\tiny\color{black}$(r_0,v_1)$};


\draw[thick,blue,-latex] (lf1) -- (a0) node[pos=0.5,above] {\tiny\color{black}$(a_0,r_0)$};

\draw[red,dashed,-latex] (f0) -- (a0) node[pos=0.6,right] {\tiny\color{black}$(a_0,r_1)$};


\draw[thick,blue,-latex] (lf1) -- (a1) node[pos=0.5,above] {\tiny\color{black}$(a_1,r_0)$};

\draw[thick,blue,-latex] (rf1) -- (a0) node[pos=0.5,above] {\tiny\color{black}$(a_0,r_0)$};

\draw[blue,thick,-latex] (rf1) -- (a1) node[pos=0.5,above] {\tiny\color{black}$(a_1,r_0)$};

\begin{scope}[xshift=-13cm,scale=0.9]
\node[inner sep=1pt,circle] (p0) at (-3,0) {\Tiny $v_0$};
\node[inner sep=1pt,circle] (p1) at (3,0) {\Tiny $v_1$};

\node[inner sep=1pt,circle] (f0) at (0,0) {\Tiny $r_1$};

\node[inner sep=1pt,circle] (lf1) at (-6,0) {\Tiny $r_0$};
\node[inner sep=1pt,circle] (rf1) at (6,0) {\Tiny $r_0$};

\node[inner sep=1pt,circle] (a1) at (0,-2) {\Tiny $a_1$};
\node[inner sep=1pt,circle] (a0) at (0,2) {\Tiny $a_0$};



\draw[thick,blue,-latex] (p0) -- (f0) node[pos=0.5,above] {\tiny\color{black}$(r_1,v_0)$};

\draw[dashed,red,thick,-latex] (p0) -- (lf1) node[pos=0.3,above] {\tiny\color{black}$(r_0,v_0)$};

\draw[thick,blue,-latex] (p1) -- (f0) node[pos=0.5,above] {\tiny\color{black}$(r_1,v_1)$};

\draw[red,dashed,thick,-latex] (p1) -- (rf1) node[pos=0.3,above] {\tiny\color{black}$(r_0,v_1)$};


\draw[thick,blue,-latex] (lf1) -- (a0) node[pos=0.5,above] {\tiny\color{black}$(a_0,r_0)$};

\draw[red,dashed,thick,-latex] (f0) -- (a0) node[pos=0.6,right] {\tiny\color{black}$(a_0,r_1)$};

\draw[blue,thick,-latex] (rf1) -- (a1) node[pos=0.5,above] {\tiny\color{black}$(a_1,r_0)$};

\draw[red,dashed,thick,-latex] (f0) -- (a1) node[pos=0.6,right] {\tiny\color{black}$(a_1,r_1)$};


\draw[thick,blue,-latex] (rf1) -- (a0) node[pos=0.5,above] {\tiny\color{black}$(a_0,r_0)$};

\draw[thick,blue,-latex] (lf1) -- (a1) node[pos=0.5,above] {\tiny\color{black}$(a_1,r_0)$};
\end{scope}

\end{tikzpicture}
\]

\noindent
Now we may use \cite[Theorem 5.1]{KPW} to compute the fundamental group.  By \cite[Theorem 5.3]{KPW} the group is generated by $E^1$ (that is, $8$ generators) subject to $4$ relations given by the Lambek half-arc conditions (see below) and setting the $5$ generators (marked blue below) corresponding to the maximal weight maximal spanning tree to be the identity, leaving $3$ generators unknown.
\begin{align*}
h_{0l} := {\color{blue} (a_0,r_0)} {\color{blue} (r_0,v_0)} = {\color{blue}(a_0,r_1)}{(r_1,v_0)}& \quad
h_{0r} := {\color{blue}  (a_0,r_0)}{\color{blue} (r_0,v_1)} = {\color{blue}(a_0,r_1)}{(r_1,v_1)} \\
h_{1l} := {\color{blue} (a_1,r_0)}{\color{blue} (r_0,v_0)}= {(a_1,r_1)}{(r_1,v_0)} & \quad
h_{1r} :=  {\color{blue}(a_1,r_0)} {\color{blue}(r_0,v_1)} = {(a_1,r_1)}{(r_1,v_1)}
\end{align*}

\noindent
A short calculation reveals that all the generators are  equal to the identity and so the group is trivial. 
\end{example}

\begin{rmk}[General case] \label{rmk:summary}
In the example above there are $2 |A|$ quadrangles, each half-arc equation has $4$ variables, each of which is repeated once, so we have $4 |A|$ variables. The maximal weight maximal spanning tree removes $N=(2|A|+2)-1$ variables with a bias to those variables on the left-hand side of \eqref{eq:ha-left} and \eqref{eq:ha-right}, leaving $2|A|$ equations in $2|A|-1$ unknowns.
\end{rmk}

\begin{thm} \label{thm:fgistriv}
Let $\mathfrak{P}=(P,A)$ be a convex polyhedron and $E$ its associated quadrangle club which has been $k$-coloured, $2 \le k \le 4$, according to Definition~\ref{dfn:colouring}. Let $\Lambda_{(E,\Cc)}$ be the associated rank-$k$ graph. Then $\Lambda_{(E,\Cc)}$ is planar and the fundamental group of $\Lambda_{(E,\Cc)}$ is trivial, and hence it is a rank-$k$ tree.
\end{thm}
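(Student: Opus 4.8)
The statement has three parts: planarity of $\Lambda_{(E,\Cc)}$, triviality of its fundamental group, and the conclusion that it is a $k$-tree. The plan is to dispatch planarity and connectivity immediately and then concentrate on the fundamental group. By construction, and as noted in the proof of Theorem~\ref{thm:polygiveshrg}, the $1$-skeleton $\Sk_{\Lambda_{(E,\Cc)}}$ is exactly the quadrangle club $E$, which we have already shown to be planar; hence $\Lambda_{(E,\Cc)}$ is planar by definition. Moreover $\Lambda_{(E,\Cc)}$ is connected and singly connected by Theorem~\ref{thm:polygiveshrg}, so once $\pi_1$ is shown to be trivial the conclusion ``$k$-tree'' follows directly from the definition.

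For the fundamental group I would first fix the left-greedy maximal spanning tree $T$ produced by the algorithm of section~\ref{sec:greedy} and invoke \cite[Theorems 5.1 and 5.3]{KPW} to obtain a finite presentation of $\pi_1(\Lambda_{(E,\Cc)})$: the generators are the edges $E^1$, the relations are the $2|A|$ half-arc Lambek conditions \eqref{eq:ha-left} and \eqref{eq:ha-right}, and every edge of $T$ is set equal to the identity. As recorded in Remark~\ref{rmk:summary}, after the $2|A|+1$ tree edges are killed this leaves $2|A|-1$ free generators subject to $2|A|$ relations. Two structural features make the relations tractable: each $h_{al}$ (resp.\ $h_{ar}$) is a relator of length four in which its four edges occur exactly once each with exponent $\pm 1$, since it is the boundary of a single square; and by Remarks~\ref{rmks:half-arc-count} every edge of $E$ occurs in exactly two of the relations, in controlled positions.

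The core of the argument is to upgrade the explicit computation of Example~\ref{ex:treex} to an arbitrary convex polyhedron by a sequence of Tietze transformations. Because each relator has the form $(a,R(a))(R(a),r(a)) = (a,L(a))(L(a),r(a))$ with all four edges distinct, any relation three of whose edges are already known to be trivial forces the fourth to be trivial as well. Starting from the trivial tree edges, and exploiting the weight-$2$ bias of $T$ towards the edges $(a,R(a))$ and $(R(a),r(a))$ occurring on the left of \eqref{eq:ha-left} and \eqref{eq:ha-right}, I would order the arcs by a traversal of $\mathfrak{P}$ (using its connectedness from the standing assumption) so that at each stage some relation contains exactly one not-yet-trivial edge; propagating triviality along this order kills all $2|A|-1$ remaining generators, and the single leftover relation then reduces to the empty word, a consistency check. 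This yields $\pi_1(\Lambda_{(E,\Cc)}) = 1$.

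The main obstacle is precisely this last step: proving that the greedy tree and the $2|A|$ length-four relators can \emph{always} be arranged into such a triangular (acyclic) elimination order, with no residual loop surviving. A priori the two-per-edge incidence could close up into a cycle of relations from which no single generator can be solved --- exactly the mechanism producing nontrivial $\pi_1$ for higher-genus surfaces. What should rule this out is the spherical origin of $\mathfrak{P}$: the count $|E^0| - |E^1| + |\Cc| = (2|A|+2) - 4|A| + 2|A| = 2$, together with the fact (Remarks~\ref{rmks:half-arc-count}) that each edge bounds exactly two squares, shows heuristically that the presentation complex behaves like a cellulation of $S^2$, forcing simple connectivity. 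I would make this rigorous either via the combinatorial traversal above, or, as a cleaner alternative, by induction on $|A|$: remove an arc and detach the corresponding quadrangles using the gluing and cut-off results (Theorem~\ref{thm:glue} and its converse), reducing to the base case of a single square whose fundamental group is visibly trivial.
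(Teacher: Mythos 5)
Your setup coincides with the paper's: planarity and single connectivity are quoted from Theorem~\ref{thm:polygiveshrg}, and the presentation of $\pi_1$ via \cite[Theorem 5.1]{KPW} relative to the left-greedy spanning tree, giving $2|A|$ length-four relations in $2|A|-1$ unknowns as in Remark~\ref{rmk:summary}, is exactly the paper's starting point. The genuine gap is that you stop precisely where the proof has to do its real work: you assert that the arcs can be ordered ``by a traversal of $\mathfrak{P}$'' so that at each stage some relation contains exactly one nontrivial edge, you correctly identify this as the main obstacle, and then you offer only two unexecuted strategies. The paper needs neither a traversal nor any further planarity input here; it closes the step with a short double-counting (pigeonhole) argument. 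By Remarks~\ref{rmks:half-arc-count} every edge of $E$ occurs in exactly two of the $2|A|$ relations, and each relation involves four distinct edges; since the tree $T$ has $2|A|+1$ edges, the tree--relation incidences number $2(2|A|+1)=4|A|+2$, whereas if every relation contained at most two tree edges there could be at most $2\cdot 2|A|=4|A|$ such incidences. Hence some relation has at least three trivial edges (it cannot have four, since the four edges of a quadrangle form an undirected $4$-cycle and $T$ is a tree), and its fourth edge is forced to be the identity. Deleting that relation and enlarging the trivial set preserves the invariant that the number of surviving relations exceeds the number of unknowns by one, so the same count repeats at every stage until all generators are trivial. This is the one idea missing from your write-up; your Euler-characteristic observation $|E^0|-|E^1|+|\Cc|=(2|A|+2)-4|A|+2|A|=2$ gestures at the right phenomenon but is never converted into an elimination step.

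Your fallback routes would not rescue the argument as stated. The proposed induction on $|A|$ via Theorem~\ref{thm:glue} and the cut-off theorem is not routine: deleting an arc $a$ from $\mathfrak{P}$ merges the faces $L(a)$ and $R(a)$, so the quadrangle club of the smaller polyhedron is \emph{not} obtained from $E$ by excising a hereditary or co-hereditary subgraph --- the face vertices, all edges incident to them, and potentially the colouring change globally --- and the deletion can also violate the standing assumption that every point has valency at least two. So that ``cleaner alternative'' would require a substantial new lemma you have not supplied. (For calibration: the paper's own iteration is itself terse --- it does not address, for instance, what happens if all four edges of a relation become trivial before that relation is used --- but the double count is the engine of its proof, and it is exactly what is absent from yours.)
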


\begin{proof}
That $\Lambda_{(E,\Cc)}$ is planar follows from the fact that $E$ is planar; it only has undirected cycles of length four and so cannot contain $K_{3,3}$ or $K_5$.

We compute the fundamental group of $\Lambda_{(E,\Cc)}$ using the method of \cite[Theorem 5.1]{KPW}. This result states that the fundamental group $\pi ( \Lambda_{(E,\Cc)})$ is generated by $E^1 \backslash T^1$ subject to the relations \eqref{eq:ha-left} and \eqref{eq:ha-right} with the generators in $T^1$ set to the identity. Following Remark~\ref{rmk:summary} this leaves $2|A|$ equations in $2|A|-1$ unknowns. We claim that at least one equation has three generators set to the identity.
Suppose, for contradiction otherwise, then $2|A|$ generators will have been set to identity at the first step, which contradicts the size of the spanning tree. So another generator is trivial, and the corresponding relation removed and it's paired variable set to the identity. Leaving
$2 |A| -1$ equations in $2|A|-2$ unknowns. Repeating this process we end up with all generators trivial, hence the fundamental group is trivial.
\end{proof}

\begin{dfn}
Let $\mathcal{L}_{\mathfrak{P}}$ denote the collection of rank-$k$ graphs $\Lambda_{(E,\Cc)}$ ($2 \le k \le 4$) which are constructed in this way. We call them Lambek trees.
\end{dfn}

\begin{rmks}
We must quickly remark that we are not claiming that $\mathcal{L}_{\mathfrak{P}}$  describes all higher-rank trees. Any subgraph of a tree constructed using the methods described here is likely to be a higher-rank tree but not be a member of $\mathcal{L}_{\mathfrak{P}}$. Indeed, consider the examples given above: Example~\ref{ex:treex} consists of four leaves $q_{0l}, q_{1l}$ suspended from $v_1$ and $q_{0r}, q_{1r}$ suspended from $v_{2}$ they are glued together along $(a_0,r_0)$ and $(a_1,r_0)$ respectively. 
None of the $\Omega_{k,m}$ examples 
are elements of $\mathcal{L}_{\mathfrak{P}}$.
\end{rmks}

\begin{cor} \label{cor:pthrg-embedd}
Let $\Lambda_{(E,\Cc)} \in \mathcal{L}_{\mathfrak{P}}$, then $\Lambda_{(E,\Cc)}$ embeds in faithfully in its fundamental groupoid.
\end{cor}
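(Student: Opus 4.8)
The plan is to deduce the corollary from the embedding criterion of \cite{bkps} by checking that $\Lambda_{(E,\Cc)}$ satisfies its hypotheses, the relevant input being the two structural facts already established: that $\Lambda_{(E,\Cc)}$ is singly connected (Theorem~\ref{thm:polygiveshrg}) and that its fundamental group is trivial (Theorem~\ref{thm:fgistriv}). The form of the criterion I would invoke is that a connected $k$-graph embeds faithfully in its fundamental groupoid provided it carries an \emph{essential} $1$-cocycle into some countable group, in the sense of Definition~\ref{def:essential}. Thus the whole proof reduces to exhibiting one such cocycle.

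First I would take the cocycle to be the degree functor $d : \Lambda_{(E,\Cc)} \to \NN^k \hookrightarrow \ZZ^k$, and verify that it is essential. This is immediate from single-connectedness: for all vertices $u,v$ we have $|u\Lambda_{(E,\Cc)} v| \le 1$, so the restriction of $d$ to each $u\Lambda_{(E,\Cc)} v$ is a map out of a set with at most one element and is therefore trivially injective. I would then unwind why an essential cocycle forces the canonical functor $i : \Lambda_{(E,\Cc)} \to \Pi(\Lambda_{(E,\Cc)})$ to be faithful, using the universal property of Definition~\ref{dfn:fundgpd}: the functor $d$ factors as $d = \tilde d \circ i$ for a unique groupoid homomorphism $\tilde d$, and since $i$ preserves range and source and is injective on $\Lambda_{(E,\Cc)}^0$, any coincidence $i(\lambda) = i(\mu)$ forces $\lambda,\mu$ to share range and source; essentiality of $d$ then yields $\lambda = \mu$. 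Applying the criterion of \cite{bkps} gives the claimed faithful embedding. As a sanity check I would also note that triviality of the fundamental group (Theorem~\ref{thm:fgistriv}) together with connectedness identifies $\Pi(\Lambda_{(E,\Cc)})$ with the pair groupoid on $\Lambda_{(E,\Cc)}^0$, under which $i(\lambda)$ is recorded by the pair $(r(\lambda),s(\lambda))$; faithfulness of $i$ is then visibly equivalent to single-connectedness, corroborating the conclusion by a second route.

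The genuine work here is bookkeeping rather than mathematics: the main point to get right is that the hypotheses we actually need from \cite{bkps} are met by single-connectedness alone, so that no delicate verification of essentiality in the presence of several parallel paths is ever required. The one place demanding slight care is the object level, namely that $i$ is injective on $\Lambda_{(E,\Cc)}^0$; this is part of the construction of the fundamental groupoid (where $\Pi(\Lambda_{(E,\Cc)})^0$ is identified with $\Lambda_{(E,\Cc)}^0$ and $i$ is the identity on objects), and it is exactly what licenses the passage from $i(\lambda)=i(\mu)$ to equality of ranges and sources in $\Lambda_{(E,\Cc)}$ used above.
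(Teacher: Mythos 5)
Your proposal is correct and takes essentially the same route as the paper: both arguments reduce the corollary to \cite[Proposition~3.12]{bkps} by exhibiting an essential $1$-cocycle, with essentiality immediate from single-connectedness (Theorem~\ref{thm:polygiveshrg}). The only difference is cosmetic --- you take the degree functor $d : \Lambda_{(E,\Cc)} \to \ZZ^k$, whereas the paper builds a bespoke cocycle $c : \Lambda_{(E,\Cc)} \to \ZZ$ with $c(a,f)=1$ on edges in $A\times F$ and $c(f,p)=-1$ on edges in $F\times P$; since $|u\Lambda_{(E,\Cc)}v|\le 1$ renders \emph{every} cocycle essential, either choice works, and yours even spares the paper's small check that $c$ extends to a functor.
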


\begin{proof}
Define the map $c: \Lambda_{(E,\Cc)} \to \ZZ$ by $c ( a , f ) = 1$ for all $(a,f) \in \Sk_{\Lambda_{(E,\Cc)}}^1=E^1$ and $c ( f , p ) = -1$ for all $(f,p) \in  \Sk_{\Lambda_{(E,\Cc)}}^1=E^1$. One checks that $c$ extends to a functor $c: \Lambda_{(E,\Cc)} \to \ZZ$. Furthermore, since $\Lambda_{(E,\Cc)}$ is singly connected it is easy to see that $c$ is essential. The result then follows by \cite[Proposition~3.12]{bkps}.
\end{proof}

\begin{rmk}
Following Remark~\ref{rmk:opposite0} and \ref{rmk:opposite} and Corollary~\ref{cor:pthrg-embedd} we have $\Lambda_{(E,\Cc)}^{opp} = \Lambda_{(E^{opp},\Cc^{opp})}$ is a rank-$k$ tree for $2 \le k \le 4$ and embeds faithfully in its fundamental groupoid by \cite[Proposition~3.12]{bkps}. It $\Lambda_{(E,\Cc)}$ is planar, then so is
$\Lambda_{(E,\Cc)}^{opp}$.
\end{rmk}

\noindent
Finally, we seek an analog of the formula $| T^1 | - | T^0| +1 =0$ for $T$ a finite rank-$1$ tree. We can get such a formula for members of $\mathcal{L}_{\mathfrak{P}}$

\begin{prop}  \label{prp:Lambek_Euler}
Let $\Lambda_{(E,c)} \in \mathcal{L}_{\mathfrak{P}}$ then  $E = \Sk_{\Lambda_{E,\mathcal{C}}}$ satisfies $|E^1 | - 2 | E^0 |  +4 =0$.
\end{prop}

\begin{proof}
By Remark~\ref{rmk:summary} we have $| E^1 | =4 |A|$ where $\mathfrak{P} = (P,A)$ is the convex polyhedron giving rise to $E$. Furthermore $|E^0| = |P|+|F|+|A|$ by definition of $E^0$. Since $E$ is planar we have $|E^0| = 2|A|+2$, by Euler's formula, and the result follows.
\end{proof}

\begin{lem} \label{lem:acyclic}
Let $\Lambda_{(E,c)} \in \mathcal{L}_{\mathfrak{P}}$ then $\Lambda_{(E,c)}$ is acyclic, that is $H_i ( \Lambda_{(E,c)}  ) = 0$ for $i \ge 1$.
\end{lem}

\begin{proof}
From \cite[Proposition 6.2]{KPW} we have $H_1 ( \Lambda ) = \operatorname{Ab} ( \pi_1 ( \Lambda_{(E,c)} ) ) = 0$. Since there are no paths in the $1$-skeleton $\Sk_{\Lambda_{(E,c)}}$ of length greater than two it follows that the set of $r$-cubes $Q_r ( \Lambda_{(E,c)} ) = \emptyset$ for $r > 2$ (see \cite[Definition 2.10]{KPS3}) and so $H_r ( \Lambda_{(E,c)} )$ for $r > 2$. Since $H_1( \Lambda_{(E,c)}) =0$ we have $\operatorname{ker} (\partial_2)=Q_2( \Lambda_{(E,c)} )$, since $H_3 ( \Lambda_{(E,c)} ) = 0$ we have $\operatorname{Im} ( \partial_3) = Q_2 ( \Lambda_{(E,c)} )$ and so $H_2 ( \Lambda_{(E,c)} ) = 0$. It follows that $H_i ( \Lambda_{(E,c)} ) = 0$ for $i \ge 1$.
\end{proof}



\section{Differences from one-dimensional trees}

\noindent
The purpose of this section is to show that, since higher-rank graphs are not free categories, then it is possible to have a  higher-rank tree with properties different to a rank-$1$ tree.

\vspace{3mm}

\textbf{Fact 1.} A rank-$1$ tree is bipartite with no cycles, so by the $4$-colour theorem it is planar.
\vspace{3mm}

\noindent
The same is not true of higher-rank trees.

\begin{example}
The hypercube graph $Q_4$ shown in \eqref{eq:q4} below to the left may be coloured with $4$ colours so that it becomes a locally convex rank-$4$ graph. Though there are paths of length four (i.e. $Q_4^4 \neq \emptyset$), the associativity condition from \cite{hrsw} is trivially satisfied). A short calculation shows that it is a tree. It is well-known (cf.\ \cite{Wikipedia2024}) that it is not planar: The subgraph of its $1$-skeleton shown below to the right in \eqref{eq:q4} is homotopic to $K_{3,3}$. In three and four dimensions edges of dimension $3$ are coloured green and drawn dotted, edges of dimension $4$ are drawn are coloured purple and drawn dash-dotted. 
\begin{equation} \label{eq:q4}
\begin{array}{l}
\begin{tikzpicture}[yscale=0.7,xscale=0.85]

\node[inner sep=1pt,circle] (00) at (0,0) {\tiny $\bullet$};
\node[inner sep=1pt,circle] (11) at (1,1) {\tiny $\bullet$};
\node[inner sep=1pt,circle] (02) at (0,2) {\tiny $\bullet$};
\node[inner sep=1pt,circle] (13) at (1,3) {\tiny $\bullet$};

\draw[<-,green!50!black,dotted,thick] (00) -- (02); 
\draw[<-,green!50!black,dotted,thick] (11) -- (13); 
\draw[<-,red,dashed,thick] (00) -- (11);
\draw[<-,red,dashed,thick] (02) -- (13);

\draw[<-,blue,thick] (00) -- (6,0);
\draw[<-,blue,thick] (13) -- (7,3);
\draw[<-,blue,thick] (11) -- (7,1);
\draw[<-,blue,thick] (02) -- (6,2);

\draw[->,purple,dash dot,thick] (00) -- (-1,-4.5);
\draw[->,purple,dash dot,thick] (11) -- (0,-3.5);
\draw[->,purple,dash dot,thick] (02) -- (-1,-2.5);
\draw[->,purple,dash dot,thick] (13) -- (0,-1.5);

\begin{scope}[xshift=6cm]

\node[inner sep=1pt,circle] (00) at (0,0) {\tiny $\bullet$};
\node[inner sep=1pt,circle] (11) at (1,1) {\tiny $\bullet$};
\node[inner sep=1pt,circle] (02) at (0,2) {\tiny $\bullet$};
\node[inner sep=1pt,circle] (13) at (1,3) {\tiny $\bullet$};

\draw[<-,green!50!black,dotted,thick] (00) -- (02); 
\draw[<-,green!50!black,dotted,thick] (11) -- (13); 
\draw[<-,red,dashed,thick] (00) -- (11);
\draw[<-,red,dashed,thick] (02) -- (13);

\draw[->,purple,dash dot, thick] (00) -- (-1,-4.5);
\draw[->,purple,dash dot, thick] (11) -- (0,-3.5);
\draw[->,purple,dash dot, thick] (02) -- (-1,-2.5);
\draw[->,purple,dash dot,thick] (13) -- (0,-1.5);

\end{scope}

\begin{scope}[yshift=-4.5cm,xshift=-1cm]

\node[inner sep=1pt,circle] (00) at (0,0) {\tiny $\bullet$};
\node[inner sep=1pt,circle] (11) at (1,1) {\tiny $\bullet$};
\node[inner sep=1pt,circle] (02) at (0,2) {\tiny $\bullet$};
\node[inner sep=1pt,circle] (13) at (1,3) {\tiny $\bullet$};

\draw[<-,green!50!black,dotted,thick] (00) -- (02); 
\draw[<-,green!50!black,dotted,thick] (11) -- (13); 
\draw[<-,red,dashed,thick] (00) -- (11);
\draw[<-,red,dashed,thick] (02) -- (13);

\draw[<-,blue,thick] (00) -- (6,0);
\draw[<-,blue,thick] (13) -- (7,3);
\draw[<-,blue,thick] (11) -- (7,1);
\draw[<-,blue,thick] (02) -- (6,2);

\end{scope}

\begin{scope}[yshift=-4.5cm,xshift=5cm]

\node[inner sep=1pt,circle] (00) at (0,0) {\tiny $\bullet$};
\node[inner sep=1pt,circle] (11) at (1,1) {\tiny $\bullet$};
\node[inner sep=1pt,circle] (02) at (0,2) {\tiny $\bullet$};
\node[inner sep=1pt,circle] (13) at (1,3) {\tiny $\bullet$};

\draw[<-,green!50!black,dotted,thick] (00) -- (02); 
\draw[<-,green!50!black,dotted,thick] (11) -- (13); 
\draw[<-,red,dashed,thick] (00) -- (11);
\draw[<-,red,dashed,thick] (02) -- (13);

\end{scope}


\begin{scope}[xshift=10cm]

\node[inner sep=1pt,circle] (00) at (0,0) {\tiny $ $};
\node[inner sep=1pt,circle] (11) at (1,1) {\tiny $ $};
\node[inner sep=1pt,circle] (02) at (0,2) {\tiny $ $};
\node[inner sep=1pt,circle] (13) at (1,3) {\tiny $ $};

\draw[-,dotted] (00) -- (02); 
\draw[-,dotted] (11) -- (13); 
\draw[-,dotted] (00) -- (11);
\draw[-,dotted] (02) -- (13);

\draw[-,dotted] (00) -- (6,0);
\draw[-,thick] (13) -- (7,3);
\draw[-,dotted] (11) -- (7,1);
\draw[-,dotted] (02) -- (6,2);

\draw[-,dotted] (00) -- (-1,-4.5);
\draw[-,dotted] (11) -- (0,-3.5);
\draw[-,dotted] (02) -- (-1,-2.5);
\draw[-,thick] (13) -- (0,-1.5);

\begin{scope}[xshift=6cm]

\node[inner sep=1pt,circle] (00) at (0,0) {\tiny $ $};
\node[inner sep=1pt,circle] (11) at (1,1) {\large $\square$};
\node[inner sep=1pt,circle] (02) at (0,2) {\tiny $ $};
\node[inner sep=1pt,circle] (13) at (1,3) {\tiny $ $};

\draw[-,dotted] (00) -- (02); 
\draw[-,thick] (11) -- (13); 
\draw[-,thick] (00) -- (11);
\draw[-,dotted] (02) -- (13);

\draw[-,thick] (00) -- (-1,-4.5);
\draw[-,thick] (11) -- (0,-3.5);
\draw[-,dotted] (02) -- (-1,-2.5);
\draw[-,dotted] (13) -- (0,-1.5);

\end{scope}

\begin{scope}[yshift=-4.5cm,xshift=-1cm]

\node[inner sep=1pt,circle] (00) at (0,0) {\tiny $ $};
\node[inner sep=1pt,circle] (11) at (1,1) {\large $\square$};
\node[inner sep=1pt,circle] (02) at (0,2) {\tiny $ $};
\node[inner sep=1pt,circle] (13) at (1,3) {\Large $\bullet$};

\draw[-,dotted] (00) -- (02); 
\draw[-,thick] (11) -- (13); 
\draw[-,thick] (00) -- (11);
\draw[-,dotted] (02) -- (13);

\draw[-,thick] (00) -- (6,0);
\draw[-,thick] (13) -- (7,3);
\draw[-,thick] (11) -- (7,1);
\draw[-,dotted] (02) -- (6,2);

\end{scope}

\begin{scope}[yshift=-4.5cm,xshift=5cm]

\node[inner sep=1pt,circle] (00) at (0,0) {\Large$\bullet$};
\node[inner sep=1pt,circle] (11) at (1,1) {\Large$\bullet$};
\node[inner sep=1pt,circle] (02) at (0,2) {\tiny $ $};
\node[inner sep=1pt,circle] (13) at (1,3) {\large $\square$};

\draw[-,thick] (00) -- (02); 
\draw[-,thick] (11) -- (13); 
\draw[-,dotted] (00) -- (11);
\draw[-,thick] (02) -- (13);

\end{scope}

\end{scope}

\end{tikzpicture}
\end{array}
\end{equation}
\end{example}

\noindent
\textbf{Fact 2.} Let $\Lambda$ be a rank-$1$ tree. Then $\Lambda$ does not admit a fixed point free automorphism of order three.


\vspace{3mm}

\noindent
The same is not true of higher-rank trees.
\vspace{3mm}

\noindent
Using the techniques developed in section~\ref{sec:glue-coloured} one may glue three locally convex, planar rank-$2$ trees together to produce a new locally convex, planar rank-$2$ tree  which has an automorphism of order three which admits no fixed points. Indeed, the reader will see that the construction given easily generalises.

\begin{example} \label{ex:triangle}
Consider the locally convex, planar rank-$2$ tree $\Lambda$ shown below to the left, it is formed from gluing six copies of the rank-$2$ graph $\Omega_{2,(1,1)}$ together together with the obvious $8$ squares $\mathcal{C}$. Note that it is singly connected as all four paths from $v_1$ to the central vertex ${\small \bullet_a}$, are equivalent. If we try to label the vertices of $E$ as if it were a member of $\mathcal{L}_{\mathfrak{P}}$ as shown in section~\ref{sec:Plabel}, then $v_1,v_2$ would be the only vertices and $\bullet_a$ would be the only vertex which corresponds to an arc, so $|A|=1$. Then by Theorem~\ref{thm:polygiveshrg} there would be $2 |A| = 2$ squares, but we have $| \mathcal{C}| =8$ here. Hence $\Lambda$ is not a member of $\mathcal{L}_{\mathfrak{P}}$ even though $|E^1|-2|E^0|+4=0$ (cf.\ Proposition~\ref{prp:Lambek_Euler}).
\begin{equation} \label{eq:diamond} 
\begin{array}{c}
\begin{tikzpicture}[scale=1.4,decoration={markings, mark=at position 0.6 with {\arrow{>}}}]
    
\node[inner sep=1pt, circle] (C) at (0,0) {\Tiny $\bullet$};
\node[inner sep=1pt, circle] (B) at (4,0) {\Tiny $\bullet$};
\node at (-0.2,0) {\Tiny $v_1$};
\node at (4.2,0) {\Tiny $v_2$};

\node (1c) at (2,0.7) {\Tiny  $r_1$};
\node (4) at (2,-0.6) { \Tiny $\bullet$};    

\draw[color=white] (B)-- node (R) [color=black,pos =0.5,inner sep=1pt] {\Tiny ${\hphantom{-}}\bullet_a$}(C); 
\draw[<-,thick,blue] (R) -- (4);
\draw[->,thick,dashed,red] (C) --  (4); 
\draw[<-,thick,dashed,red] (4) --  (B); 

\draw[->,thick,blue] (C) -- (1c); 
\draw[->,thick,dashed,red] (1c) --  (R); 
\draw[-latex,thick,blue] (B) -- (1c);

\node[inner sep=1pt, circle] (CR) at (0.98,0.12) {\Tiny $\bullet$};
\node[inner sep=1pt, circle] (CR1) at (0.98,-0.12) {\Tiny $\bullet$};

\node[inner sep=1pt, circle] (BR) at (3.02,-0.12) {\Tiny $\bullet$};
\node[inner sep=1pt, circle] (BR1) at (3.02,0.12) {\Tiny $\bullet$};

\draw[-latex,thick,blue] (C) -- (CR);
\draw[-latex,thick,red,dashed] (C) -- (CR1);
\draw[-latex,thick,red,dashed] (CR) -- (R);
\draw[-latex,thick,blue] (CR1) -- (R);
\draw[-latex,thick,dashed,red] (B) -- (BR);
\draw[-latex,thick,blue] (B) -- (BR1);
\draw[-latex,thick,blue] (BR) -- (R);
\draw[-latex,thick,dashed,red] (BR1) -- (R);

\begin{scope}[scale=1.2,xshift=4.6cm]
    
\node[inner sep=1pt, circle] (C) at (0,0) {\Tiny $\bullet$};
\node[inner sep=1pt, circle] (B) at (4,0) {\Tiny $\bullet$};
\node at (-0.2,0) {\Tiny $v_1$};
\node at (4.2,0) {\Tiny $v_2$};

\node (1c) at (2,0.7) {\Tiny  $r_1$};
\node (4) at (2,-0.6) { \Tiny $\bullet$};    

\draw[color=white] (B)-- node (R) [color=black,pos =0.5,inner sep=1pt] {\Tiny ${\hphantom{-}} \bullet_a$}(C); 
\draw[<-,thick,blue] (R) -- (4);

\draw[->,thick,blue] (C) -- (1c); 

\draw[-latex,thick,blue] (B) -- (1c);

\node[inner sep=1pt, circle] (CR) at (0.98,0.12) {\Tiny $\bullet$};
\node[inner sep=1pt, circle] (CR1) at (0.98,-0.12) {\Tiny $\bullet$};

\node[inner sep=1pt, circle] (BR) at (3.02,-0.12) {\Tiny $\bullet$};
\node[inner sep=1pt, circle] (BR1) at (3.02,0.12) {\Tiny $\bullet$};

\draw[-latex,thick,blue] (C) -- (CR);
\draw[-latex,thick,red,dashed] (C) -- (CR1);
\draw[-latex,thick,red,dashed] (CR) -- (R);

\draw[-latex,thick,red,dashed] (B) -- (BR);
\draw[-latex,thick,blue] (B) -- (BR1);

\end{scope}

\end{tikzpicture}
\end{array}
\end{equation}

\noindent
It has maximal spanning tree as shown above to the right and from there it is straightforward to show that it is a tree.
$\Lambda$ has sources at $v_1,v_2$ and so by the results of section~\ref{sec:glue-coloured} we may glue $\Lambda$ to itself to form a triangular $2$-coloured graph $\Delta$ shown below on the left. Either by inspection or repeated use of Theorem~\ref{thm:glue} we see that the coloured graph $\Delta$ shown below is a locally convex, planar rank-$2$ graph with spanning tree shown on the right.
\begin{equation} \label{eq:daisy_chain}
\begin{array}{l}
\begin{tikzpicture}[scale=1.3,decoration={markings, mark=at position 0.6 with {\arrow{>}}}]
    
\node[inner sep=1pt, circle] (A) at (2,3) {\Tiny $\bullet$};
\node[inner sep=1pt, circle] (C) at (0,0) {\Tiny $\bullet$};
\node[inner sep=1pt, circle] (B) at (4,0) {\Tiny $\bullet$};
    
\node (1a) at (1.8,1.2) {\Tiny  $r_1$};
\node (1b) at (2.3,1.2) {\Tiny  $r_2$};
\node (1c) at (2,0.7) {\Tiny  $r_3$};

\node (2) at (0.2,1.8) { \Tiny $\bullet$};  

\node (3) at (3.7,1.8) { \Tiny $\bullet$};  

\node (4) at (2,-0.6) { \Tiny $\bullet$};    

\draw[color=white] (C)-- node (P) [color=black,pos =0.5] {\Tiny $\bullet$} (A);

\draw[color=white] (B)--node (Q) [color=black,pos =0.5,] {\Tiny $\bullet$} (A) ;

\draw[color=white] (B)-- node (R) [color=black,pos =0.5] {\Tiny $\bullet$}(C);


\draw[-latex,thick,dashed,red] (A) --  (2); %

\draw[-latex,thick,blue] (2) -- (P);

\draw[latex-,thick,dashed,red] (P) -- (1a); %

\draw[latex-,thick,blue] (1a) --  (A); %

\draw[latex-,thick,dashed,red] (2) -- (C);%

\draw[-latex,thick,blue] (C) -- (1c); %
 
\draw[-latex,thick,dashed,red] (1c) --  (R); %

\draw[latex-,thick,blue] (R) -- (4);%
 
\draw[-latex,thick,dashed,red] (C) --  (4); %

\draw[latex-,thick,dashed,red] (4) --  (B); %

\draw[-latex,thick,blue] (B) -- (1b); %

\draw[latex-,thick,dashed,red] (Q) --   (1b); 

\draw[-latex,thick,blue] (3) --  (Q);

\draw[latex-,thick,dashed,red] (3) --  (A); %

\draw[latex-,thick,dashed,red] (3) --  (B); %


\node[inner sep=1pt, circle] (CP) at (0.37,0.92) {\Tiny $\bullet$}; %

\node[inner sep=1pt, circle] (CP1) at (0.71,0.69) {\Tiny $\bullet$}; %

\node[inner sep=1pt, circle] (AP) at (1.67,2.16) {\Tiny $\bullet$};
\node[inner sep=1pt, circle] (AP1) at (1.37,2.37) {\Tiny $\bullet$};

\node[inner sep=1pt, circle] (AQ) at (2.65,2.37) {\Tiny $\bullet$};
\node[inner sep=1pt, circle] (AQ1) at (2.33,2.16) {\Tiny $\bullet$};

\node[inner sep=1pt, circle] (BQ) at (3.35,0.63) {\Tiny $\bullet$};
\node[inner sep=1pt, circle] (BQ1) at (3.67,0.84) {\Tiny $\bullet$};

\node[inner sep=1pt, circle] (CR) at (0.98,0.12) {\Tiny $\bullet$};
\node[inner sep=1pt, circle] (CR1) at (0.98,-0.12) {\Tiny $\bullet$};

\node[inner sep=1pt, circle] (BR) at (3.02,-0.12) {\Tiny $\bullet$};
\node[inner sep=1pt, circle] (BR1) at (3.02,0.12) {\Tiny $\bullet$};


\draw[-latex,thick,dashed,red] (C) -- (CP);
\draw[-latex,thick,blue] (C) -- (CP1);
\draw[-latex,thick,blue] (CP) -- (P);
\draw[-latex,thick,dashed,red] (CP1) -- (P);
\draw[-latex,thick,blue] (A) -- (AP);
\draw[-latex,thick,red,dashed] (A) -- (AP1);
\draw[-latex,thick,red,dashed] (AP) -- (P);
\draw[-latex,thick,blue] (AP1) -- (P);

\draw[-latex,thick,blue] (B) -- (BQ);
\draw[-latex,thick,dashed,red] (B) -- (BQ1);
\draw[-latex,thick,red,dashed] (BQ) -- (Q);
\draw[-latex,thick,blue] (BQ1) -- (Q);
\draw[-latex,thick,dashed,red] (A) -- (AQ);
\draw[-latex,thick,blue] (A) -- (AQ1);
\draw[-latex,thick,blue] (AQ) -- (Q);
\draw[-latex,thick,dashed,red] (AQ1) -- (Q);

\draw[-latex,thick,blue] (C) -- (CR);
\draw[-latex,thick,red,dashed] (C) -- (CR1);
\draw[-latex,thick,red,dashed] (CR) -- (R);
\draw[-latex,thick,blue] (CR1) -- (R);
\draw[-latex,thick,dashed,red] (B) -- (BR);
\draw[-latex,thick,blue] (B) -- (BR1);
\draw[-latex,thick,blue] (BR) -- (R);
\draw[-latex,thick,dashed,red] (BR1) -- (R);


\draw[-latex,thick,blue] (C) -- (1a);
\draw[-latex,thick,blue] (A) -- (1b);
\draw[-latex,thick,blue] (B) -- (1c);

\begin{scope}[xshift=6cm]
\node[inner sep=1pt, circle] (A) at (2,3) {\Tiny $\bullet$};
\node[inner sep=1pt, circle] (C) at (0,0) {\Tiny $\bullet$};
\node[inner sep=1pt, circle] (B) at (4,0) {\Tiny $\bullet$};
    
\node (1a) at (1.8,1.2) {\Tiny  $r_1$};
\node (1b) at (2.3,1.2) {\Tiny  $r_2$};
\node (1c) at (2,0.7) {\Tiny  $r_3$};

\node (2) at (0.2,1.8) { \Tiny $\bullet$};  

\node (3) at (3.7,1.8) { \Tiny $\bullet$};  

\node (4) at (2,-0.6) { \Tiny $\bullet$};

\draw[color=white] (C)-- node (P) [color=black,pos =0.5] {\Tiny $\bullet$} (A);

\draw[color=white] (B)--node (Q) [color=black,pos =0.5,] {\Tiny $\bullet$} (A) ;

\draw[color=white] (B)-- node (R) [color=black,pos =0.5] {\Tiny $\bullet$}(C);


\draw[-latex,thick,blue] (2) -- (P);%

\draw[latex-,thick,blue] (1a) --  (A); %

\draw[latex-,thick,blue] (R) -- (4);%

\draw[-latex,thick,dashed,red] (C) --  (4); %

\draw[-latex,thick,blue] (B) -- (1b); %

\draw[-latex,thick,blue] (3) --  (Q);%


\node[inner sep=1pt, circle] (CP) at (0.37,0.92) {\Tiny $\bullet$}; 

\node[inner sep=1pt, circle] (CP1) at (0.71,0.69) {\Tiny $\bullet$}; %

\node[inner sep=1pt, circle] (AP) at (1.67,2.16) {\Tiny $\bullet$};
\node[inner sep=1pt, circle] (AP1) at (1.37,2.37) {\Tiny $\bullet$};

\node[inner sep=1pt, circle] (AQ) at (2.65,2.37) {\Tiny $\bullet$};
\node[inner sep=1pt, circle] (AQ1) at (2.33,2.16) {\Tiny $\bullet$};

\node[inner sep=1pt, circle] (BQ) at (3.35,0.63) {\Tiny $\bullet$};
\node[inner sep=1pt, circle] (BQ1) at (3.67,0.84) {\Tiny $\bullet$};

\node[inner sep=1pt, circle] (CR) at (0.98,0.12) {\Tiny $\bullet$};
\node[inner sep=1pt, circle] (CR1) at (0.98,-0.12) {\Tiny $\bullet$};

\node[inner sep=1pt, circle] (BR) at (3.02,-0.12) {\Tiny $\bullet$};
\node[inner sep=1pt, circle] (BR1) at (3.02,0.12) {\Tiny $\bullet$};


\draw[-latex,thick,dashed,red] (C) -- (CP);
\draw[-latex,thick,blue] (C) -- (CP1);
\draw[-latex,thick,blue] (CP) -- (P);

\draw[-latex,thick,blue] (A) -- (AP);
\draw[-latex,thick,red,dashed] (A) -- (AP1);

\draw[-latex,thick,blue] (B) -- (BQ);
\draw[-latex,thick,dashed,red] (B) -- (BQ1);

\draw[-latex,thick,dashed,red] (A) -- (AQ);
\draw[-latex,thick,blue] (A) -- (AQ1);
\draw[-latex,thick,blue] (AQ) -- (Q);

\draw[-latex,thick,blue] (C) -- (CR);
\draw[-latex,thick,red,dashed] (C) -- (CR1);

\draw[-latex,thick,dashed,red] (B) -- (BR);
\draw[-latex,thick,blue] (B) -- (BR1);
\draw[-latex,thick,blue] (BR) -- (R);


\draw[-latex,thick,blue] (C) -- (1a);
\draw[-latex,thick,blue] (A) -- (1b);
\draw[-latex,thick,blue] (B) -- (1c);
\end{scope}

\end{tikzpicture}
\end{array}
\end{equation}

\noindent
By construction the graph $\Delta$ shown above admits a fixed point free action of $\mathbb{Z} / 3 \mathbb{Z} $ by rotating it by $\frac{2\pi}{3}$ anti-clockwise about the centre of the equilateral triangle formed by $r_1,r_2,r_3$ shown above. It remains to find out if it is a rank-$2$ tree.

To compute its fundamental group we find a maximal spanning tree as shown above to the right. Note that we cannot use the same spanning tree for $\Lambda$ in all three places, as it would create a nontrivial blue cycle on the inside of $\Delta$.
A short calculation using the maximal spanning tree shown in \eqref{eq:daisy_chain} above right shows that the fundamental group is trivial and so the graph is a rank-$2$ tree, and our task is complete.
\end{example}


\textbf{Fact 3.} Let $\Lambda$ be a rank-$1$ tree with an odd number of vertices. The $\Lambda$ admits a fixed point free automorphism.

\vspace{3mm}
The same is not true of higher-rank trees.

\begin{example} \label{ex:tree-glue}
The following connected, singly connected locally convex, planar rank-$2$ tree built out of gluing copies of $\Omega_{2,(1,1)}$ together  has $15$ vertices, but admits no automorphisms. It also embeds in its fundamental groupoid.
\begin{equation} \label{eq:tree-glue}
\begin{array}{l}
\begin{tikzpicture}[scale=1.3]

\node[inner sep=2.5pt] at (0,1) {\tiny $\bullet$};
\node[inner sep=2.5pt] at (0,2) {\tiny $\bullet$};


\node[inner sep=2.5pt] at (1,1) {\tiny $\bullet$};
\node[inner sep=2.5pt] at (1,2) {\tiny $\bullet$};


\node[inner sep=2.5pt] at (2,1) {\tiny $\bullet$};
\node[inner sep=2.5pt] at (2,2) {\tiny $\bullet$};


\node[inner sep=2.5pt] at (3,0) {\tiny $\bullet$};
\node[inner sep=2.5pt] at (3,1) {\tiny $\bullet$};
\node[inner sep=2.5pt] at (3,2) {\tiny $\bullet$};


\node[inner sep=2.5pt] at (4,0) {\tiny $\bullet$};
\node[inner sep=2.5pt] at (4,1) {\tiny $\bullet$};
\node[inner sep=2.5pt] at (4,2) {\tiny $\bullet$};


\node[inner sep=2.5pt] at (5,0) {\tiny $\bullet$};
\node[inner sep=2.5pt] at (5,1) {\tiny $\bullet$};
\node[inner sep=2.5pt] at (5,2) {\tiny $\bullet$};

\draw[blue, thick,midarrow] (4,0)--(3,0);
\draw[blue,thick,midarrow] (5,0)--(4,0);

\draw[blue, thick,midarrow] (1,1)--(0,1);
\draw[blue, thick,midarrow] (2,1)--(1,1);
\draw[blue, thick,midarrow] (3,1)--(2,1);
\draw[blue, thick,midarrow] (4,1)--(3,1);
\draw[blue,thick,midarrow] (5,1)--(4,1);

\draw[blue, thick,midarrow] (1,2)--(0,2);
\draw[blue, thick,midarrow] (2,2)--(1,2);
\draw[blue, thick,midarrow] (3,2)--(2,2);
\draw[blue, thick,midarrow] (4,2)--(3,2);
\draw[blue,thick,midarrow] (5,2)--(4,2);


\draw[red, dashed, thick,midarrow] (0,2)--(0,1);

\draw[red, dashed, thick,midarrow] (1,2)--(1,1);

\draw[red, dashed, thick,midarrow] (2,2)--(2,1);

\draw[red, dashed, thick,midarrow] (3,1)--(3,0);
\draw[red, dashed, thick,midarrow] (3,2)--(3,1);

\draw[red, dashed, thick,midarrow] (4,1)--(4,0);
\draw[red, dashed, thick,midarrow] (4,2)--(4,1);

\draw[red, dashed, thick,midarrow] (5,1)--(5,0);
\draw[red, dashed, thick,midarrow] (5,2)--(5,1);

%
%
%

\end{tikzpicture}
\end{array}
\end{equation}
\end{example}

\section{Tailpiece} \label{sec:tailpiece}

Many examples of higher-rank trees we have covered in the above text have sources and/or sinks on the outside of the main body of their skeleton. See \eqref{eq:tree-glue}, \eqref{eq:daisy_chain}, \eqref{eq:diamond}, \eqref{eq:q4}, \eqref{eq:c1-coloured}. Some of these graphs may be glued together at sources as we saw with \eqref{eq:daisy_chain}, \eqref{eq:diamond}; here we glued them together in a circuit using the two sources on the outside of \eqref{eq:daisy_chain}. Some of these graphs, typically members of $\mathcal{L}_{\mathfrak{P}}$ which have sinks on the outside as with \eqref{eq:c1-coloured}. Again, we can glue them together in a circuit, however we can also glue them in place of the edges in a 2-regular tree $T_2$ as shown below.
\[
\begin{tikzpicture}

\begin{scope}[xscale=0.03cm,rotate=90,yscale=0.03cm]

\node[inner sep=1pt,circle] (p0) at (-1,0) {\Tiny  $\bullet$};
\node[inner sep=1pt,circle] (p1) at (1,0) {\Tiny  $\bullet$};

\node[inner sep=1pt,circle] (f0) at (0,0) {\Tiny  $\bullet$};

\node[inner sep=1pt,circle] (lf1) at (-2,0) {\Tiny  $\bullet$};
\node[inner sep=1pt,circle] (rf1) at (2,0) {\Tiny  $\bullet$};

\node[inner sep=1pt,circle] (a1) at (0,-1.5) {\Tiny  $\bullet$};
\node[inner sep=1pt,circle] (a0) at (0,1.5) {\Tiny  $\bullet$};

\draw[thick,blue,-latex] (p0) -- (f0);
\draw[dashed,red,thick,-latex] (p0) -- (lf1);
\draw[thick,blue,-latex] (p1) -- (f0);
\draw[red,dashed,thick,-latex] (p1) -- (rf1);


\draw[thick,blue,-latex] (lf1) -- (a0);
\draw[red,dashed,thick,-latex] (f0) -- (a0);
\draw[blue,thick,-latex] (rf1) -- (a1);
\draw[red,dashed,thick,-latex] (f0) -- (a1);


\draw[thick,blue,-latex] (rf1) -- (a0);
\draw[thick,blue,-latex] (lf1) -- (a1);

\end{scope}

\begin{scope}[xscale=0.03cm,rotate=90,yscale=0.03cm,yshift=-3cm]

\node[inner sep=1pt,circle] (p0) at (-1,0) {\Tiny  $\bullet$};
\node[inner sep=1pt,circle] (p1) at (1,0) {\Tiny  $\bullet$};

\node[inner sep=1pt,circle] (f0) at (0,0) {\Tiny  $\bullet$};

\node[inner sep=1pt,circle] (lf1) at (-2,0) {\Tiny  $\bullet$};
\node[inner sep=1pt,circle] (rf1) at (2,0) {\Tiny  $\bullet$};

\node[inner sep=1pt,circle] (a1) at (0,-1.5) {\Tiny  $\bullet$};
\node[inner sep=1pt,circle] (a0) at (0,1.5) {\Tiny  $\bullet$};

\node[inner sep=1pt,circle] at (0,-2) { $\ldots$};

\draw[thick,blue,-latex] (p0) -- (f0);
\draw[dashed,red,thick,-latex] (p0) -- (lf1);
\draw[thick,blue,-latex] (p1) -- (f0);
\draw[red,dashed,thick,-latex] (p1) -- (rf1);


\draw[thick,blue,-latex] (lf1) -- (a0);
\draw[red,dashed,thick,-latex] (f0) -- (a0);
\draw[blue,thick,-latex] (rf1) -- (a1);
\draw[red,dashed,thick,-latex] (f0) -- (a1);


\draw[thick,blue,-latex] (rf1) -- (a0);
\draw[thick,blue,-latex] (lf1) -- (a1);

\end{scope}

\begin{scope}[xscale=0.03cm,rotate=90,yscale=0.03cm,yshift=3cm]

\node[inner sep=1pt,circle] (p0) at (-1,0) {\Tiny  $\bullet$};
\node[inner sep=1pt,circle] (p1) at (1,0) {\Tiny  $\bullet$};

\node[inner sep=1pt,circle] (f0) at (0,0) {\Tiny  $\bullet$};

\node[inner sep=1pt,circle] (lf1) at (-2,0) {\Tiny  $\bullet$};
\node[inner sep=1pt,circle] (rf1) at (2,0) {\Tiny  $\bullet$};

\node[inner sep=1pt,circle] (a1) at (0,-1.5) {\Tiny  $\bullet$};
\node[inner sep=1pt,circle] (a0) at (0,1.5) {\Tiny  $\bullet$};

\node[inner sep=1pt,circle] at (0,2) { $\ldots$}; 

\draw[thick,blue,-latex] (p0) -- (f0);
\draw[dashed,red,thick,-latex] (p0) -- (lf1);
\draw[thick,blue,-latex] (p1) -- (f0);
\draw[red,dashed,thick,-latex] (p1) -- (rf1);


\draw[thick,blue,-latex] (lf1) -- (a0);
\draw[red,dashed,thick,-latex] (f0) -- (a0);
\draw[blue,thick,-latex] (rf1) -- (a1);
\draw[red,dashed,thick,-latex] (f0) -- (a1);


\draw[thick,blue,-latex] (rf1) -- (a0);
\draw[thick,blue,-latex] (lf1) -- (a1);

\end{scope}

\begin{scope}[xscale=0.008cm,rotate=180,yscale=0.03cm,xshift=4.7cm,yshift=1.5cm]

\node[inner sep=1pt,circle] (p0) at (-1,0) {\Tiny  $\bullet$};
\node[inner sep=1pt,circle] (p1) at (1,0) {\Tiny  $\bullet$};

\node[inner sep=1pt,circle] (f0) at (0,0) {\Tiny  $\bullet$};

\node[inner sep=1pt,circle] (lf1) at (-2,0) {\Tiny  $\bullet$};
\node[inner sep=1pt,circle] (rf1) at (2,0) {\Tiny  $\bullet$};

\node[inner sep=1pt,circle] (a1) at (0,-1.5) {\Tiny  $\bullet$};
\node[inner sep=1pt,circle] (a0) at (0,1.5) {\Tiny  $\bullet$};

\node[inner sep=1pt,circle] at (-1.3,1.7) { $\ddots$};
\node[inner sep=1pt,circle] at (1,1.7) { $\iddots$};

\draw[thick,blue,-latex] (p0) -- (f0);
\draw[dashed,red,thick,-latex] (p0) -- (lf1);
\draw[thick,blue,-latex] (p1) -- (f0);
\draw[red,dashed,thick,-latex] (p1) -- (rf1);


\draw[thick,blue,-latex] (lf1) -- (a0);
\draw[red,dashed,thick,-latex] (f0) -- (a0);
\draw[blue,thick,-latex] (rf1) -- (a1);
\draw[red,dashed,thick,-latex] (f0) -- (a1);


\draw[thick,blue,-latex] (rf1) -- (a0);
\draw[thick,blue,-latex] (lf1) -- (a1);

\end{scope}

\begin{scope}[xscale=0.008cm,rotate=180,yscale=0.03cm,xshift=-4.7cm,yshift=1.5cm]

\node[inner sep=1pt,circle] (p0) at (-1,0) {\Tiny  $\bullet$};
\node[inner sep=1pt,circle] (p1) at (1,0) {\Tiny  $\bullet$};

\node[inner sep=1pt,circle] (f0) at (0,0) {\Tiny  $\bullet$};

\node[inner sep=1pt,circle] (lf1) at (-2,0) {\Tiny  $\bullet$};
\node[inner sep=1pt,circle] (rf1) at (2,0) {\Tiny  $\bullet$};

\node[inner sep=1pt,circle] (a1) at (0,-1.5) {\Tiny  $\bullet$};
\node[inner sep=1pt,circle] (a0) at (0,1.5) {\Tiny  $\bullet$};

\node[inner sep=1pt,circle] at (-1.3,1.7) { $\ddots$};
\node[inner sep=1pt,circle] at (1,1.7) { $\iddots$};

\draw[thick,blue,-latex] (p0) -- (f0);
\draw[dashed,red,thick,-latex] (p0) -- (lf1);
\draw[thick,blue,-latex] (p1) -- (f0);
\draw[red,dashed,thick,-latex] (p1) -- (rf1);


\draw[thick,blue,-latex] (lf1) -- (a0);
\draw[red,dashed,thick,-latex] (f0) -- (a0);
\draw[blue,thick,-latex] (rf1) -- (a1);
\draw[red,dashed,thick,-latex] (f0) -- (a1);


\draw[thick,blue,-latex] (rf1) -- (a0);
\draw[thick,blue,-latex] (lf1) -- (a1);

\end{scope}

\end{tikzpicture}
\]

\noindent
The automorphism group of the above graph contains the automorphism group of $T_2$, plus
an internal symmetry $\mathbb{Z} / 2 \mathbb{Z}$ for each of the diamonds substituted for each edge.

\begin{dfn}
Let $\alpha, \beta \in \Lambda^{\le m}$,  set
\[
\Sigma_{\alpha,\beta} = \{ g \in \Gamma : g ( \alpha ) = g ( \beta ) \} .
\]
\end{dfn}

\noindent
Let $\gamma,\delta \in \Lambda^{\le n}$ and consider $\Sigma_{\alpha,\beta} \cap \Sigma_{\gamma,\delta}$. If $n \le m$ then is nonempty unless 
$\gamma = \alpha \alpha'$ and $\delta = \beta \beta'$ in which case $\Sigma={\alpha,\beta} \cap \Sigma_{\alpha\alpha',\beta\beta'} = \Sigma_{\alpha,\beta}$. Conversely, in the case $m \le n$ we have $\alpha = \gamma \gamma'$ and $\beta = \delta  \delta'$ we have $\Sigma_{\gamma \gamma',\beta \beta'} \cap \Sigma_{\gamma ,\delta} = \Sigma_{\gamma,\delta}$. Suppose $m,n$ are incomparable and $g \in \Sigma_{\alpha,\beta} \cap \Sigma_{\gamma,\delta}$. Let $t = m \wedge n$ then by the factorisation property we have
\[
g ( \alpha (0,t) ) = g ( \beta (0,t) ) \text{ from } \Sigma_{\alpha,\beta} \text{ and } 
g ( \gamma (0,t) ) = g ( \delta (0,t) ) \text{ from } \Sigma_{\gamma,\delta} .
\]

\noindent
Hence $g \in \Sigma_{\alpha(0,t),\beta(0,t)}$ and $g \in \Sigma_{\gamma(0,t),\delta(0,t)}$.
Arguing as before, we can only have a nonempty intersection if $\gamma (0,t)  = \mu \nu$ and $\alpha (0,t) = \mu \nu'$, similarly $\delta (0,t)  = \kappa \nu''$ and $\beta (0,t) = \kappa \nu'''$, so  $\Sigma_{\alpha,\beta} \cap \Sigma_{\gamma,\delta} = \Sigma_{\mu,\kappa}$. The above calculations show that $\{ \Sigma_{\alpha,\beta} : \alpha ,\beta \in \Lambda^{\le m}  \}$ form a basis for a topology on $\Gamma$.

\begin{thm}
Let $\Lambda$ be a $k$-graph and $\Gamma = \Aut ( \Lambda )$ be the group of automorphisms of $\Lambda$. Then $\Gamma$ with the pointwise convergence topology is a totally disconnected, locally compact group.
\end{thm}

\begin{proof}[{\bf Proof} (Following {\cite[\S 1.3]{We}})]
Take basic open set $\Sigma_{\alpha, \beta}$ and consider the inversion map $i : \Gamma \to \Gamma$ given by $i(g)=g^{-1}$. The preimage of $\Sigma_{\alpha,\beta}$ is then $\Sigma_{\beta,\alpha}$ which is an open set. Let $m : \Gamma \times \Gamma \to \Gamma$ be the multiplication map. Fix $(g , h ) \in m^{-1} ( \Sigma_{\alpha,\beta} )$. Since $g$ is bijective there must be $\gamma \in \Lambda^{\le m}$ such that $g ( \alpha ) = \gamma$. Since $hg \in \Sigma_{\alpha,\beta}$ we must have $h ( \gamma ) = \beta$. The open set $\Sigma_{\gamma, \beta} \times \Gamma_{\alpha,\gamma}$ is then an open set containing $(h,g)$ and is contained in $m^{-1} ( \Sigma_{\alpha,\beta} )$, which is therefore open.

Since $\Lambda$, by definition, is countable it follows that $\Aut ( \Lambda )$ is second countable. Now suppose $F \subseteq \Lambda^0$ be finite. Let $\Gamma_{(F)}$ be the pointwise stabiliser of $F$ in $\Gamma$. The set $\Gamma_{(F)}$ is a basic open set and
\[
\mathcal{F} = \{ \Gamma_{(F)} : F \subseteq \Lambda^0 \text{ with } |F|<\infty \} 
\]

\noindent
is a basis of the identity. The sets $\Gamma_{(F)}$ are subgroups with nonempty interior, so $\mathcal{F}$ is in fact a basis of clopen subsets. Since a basis for the topology on $\Aut ( \Lambda )$ is given by cosets of the elements of $\mathcal{F}$ it follows that $\Aut ( \Lambda )$ is totally disconnected.
\end{proof}

\end{document}